\title{On the compactness of the bi-commutator}
\author{Henri Martikainen}
\author{Tuomas Oikari}
\address[H.M.]{Department of Mathematics and Statistics, Washington University
in St. Louis, 1 Brookings Drive, St. Louis, MO 63130, USA}
\email{henri@wustl.edu}
\address[T.O.]{Departamento de Matem\`atiques, Universitat Aut\`onoma de Barcelona,
	Edifici C Facultat de Ci\`encies, 08193 Bellaterra (Barcelona), Catalonia}
\email{tuomas.oikari@gmail.com}
\subjclass[2010]{42B20}
\keywords{Calder\'on--Zygmund operators, singular integrals, multi-parameter analysis, commutators, compactness}
\begin{document}

\allowdisplaybreaks

\begin{abstract}
    We prove compactness results and characterizations for the bi-commutator
    $[T_1, [b, T_2]]$ of a symbol $b$ and two non-degenerate
    Calder\'on--Zygmund singular integral operators
    $T_1, T_2$. Our strategy for proving sufficient conditions for compactness is to first establish them in
    the mixed-norm $L^{p_1}L^{p_2}\to L^{q_1}L^{q_2}$ off-diagonal case with $p_i < q_i$,
    and then extend these to other exponents, including the diagonal $p_i = q_i$,
    with a new extrapolation argument. In particular, the natural product $\VMO$
    condition is obtained as a sufficient condition in the diagonal.
    A full characterization is obtained,
    both in terms of a vanishing mean oscillation type condition and in
    terms of the approximability of the symbol, whenever the inequality $p_i \le q_i$
    is strict for at least one index. The extrapolation scheme for proving sufficiency
    requires us to prove new approximation results in relevant bi-parameter function
    spaces that are of independent interest. The necessity results are obtained
    by carefully combining recent rectangular approximate weak factorization methods with a classical
    idea of Uchiyama.
\end{abstract}

\maketitle

\section{Introduction and main results}
A commutator of pointwise multiplication and Calder\'on--Zygmund operator (CZO)
takes the form $[b,T]f := bTf - T(bf)$. A CZO is an $L^2$ bounded singular integral
operator (SIO) -- formally, an operator of the form
$$
  Tf(x)=\int_{\R^d}K(x,y)f(y)\ud y,
$$
where different assumptions on the {\em kernel} $K$ lead to
various classes of linear transformations arising across analysis.
Both the boundedness and compactness of the above commutator is fully understood.
In the very recent paper \cite{AHLMO2021}
the off-diagonal $L^{p_1}_{x_1}L^{p_2}_{x_2}\to L^{q_1}_{x_1}L^{q_2}_{x_2}$
boundedness theory of bi-commutators $[T_1, [b, T_2]]$ was developed -- here each $T_i$
is a Calder\'on--Zygmund operator on $\R^{d_i}$ and $\R^d$ is viewed as
the bi-parameter product space $\R^d = \R^{d_1} \times \R^{d_2}$. In the current
companion paper we study compactness questions. 
Interestingly, our new methods
are such that they make full use of the off-diagonal bounds -- even in the diagonal case --
thus showcasing, beyond their intrinsic interest, also the usefulness of such general estimates. 
In particular, we obtain, via a new off-diagonal extrapolation method for bi-commutators,
a sufficient condition for compactness also in the diagonal $L^p \to L^p$ case in terms
of a product $\VMO$ condition, see Lacey--Wick \cite{LaTeWi06} 
for the related Hilbert case.

To understand the context and the appearing function spaces, we first recap
the one-parameter theory. The results of Nehari \cite{Nehari1957}
and Coifman--Rochberg--Weiss \cite{CRW} characterize the diagonal $L^p \to L^p$
boundedness of the Hilbert and Riesz commutators, respectively, in terms
of $\BMO$ -- the usual space of functions of bounded mean oscillation:
$$
\Norm{b}{\BMO} :=\sup_I \fint_I  \abs{b-\ave{b}_I},
$$
where the supremum is over all cubes $I \subset \R^d$ and $\ave{b}_I = \fint_I b := \frac{1}{|I|} \int_I b$.
The off-diagonal situation $[b,T]\colon L^p\to L^q$, $p \ne q$ is also understood:
for $1<p<q<\infty$ the $\BMO$ space is replaced by the homogeneous H\"older space $\dot C^{0,\beta}$,
$$
\|[b,T]\|_{L^p \to L^q} \sim \Norm{b}{\dot C^{0,\beta}} 
:= \sup_{x \ne y} \frac{|b(x) - b(y)|}{|x-y|^{\beta}}, \qquad \beta
:=d\Big(\frac{1}{p}-\frac{1}{q}\Big),
$$
see Janson \cite{Jan1978}. The boundedness in the remaining range $1<q<p<\infty$
was characterised only very recently by Hyt\"onen \cite{HyLpq2021}:
$$
 \|[b,T]\|_{L^p \to L^q} \sim \Norm{b}{\dot L^{r}}
 := \inf_{c \in \C} \Norm{b-c}{L^r}, \qquad \frac{1}{r} :=\frac{1}{q}-\frac{1}{p}.
$$
These results hold for very general CZOs -- the lower bounds, or necessary conditions,
require only very mild non-degeneracy of the kernels.
The result in this last range is somewhat surprising -- it says that there is
essentially no cancellation between $bT$ and $Tb$ in this regime (indeed, the upper
bound is a triviality for both terms individually). Yet, it was shown in \cite{HyLpq2021} to have
applications related to a conjecture of Iwaniec \cite{Iwa1997} concerning
the prescribed Jacobian problem.

Regarding the multi-parameter case, the result of \cite{AHLMO2021} is as follows.
Given  $p_i, q_i \in (1, \infty),$ let $\beta_i,r_i$ be defined through the relations
\begin{equation*}
\begin{split}
  \beta_i := d_i\left( \frac{1}{p_i}-\frac{1}{q_i}\right), \quad \text{if}\quad p_i<q_i;\qquad
  \frac{1}{q_i} :=\frac{1}{r_i}+\frac{1}{p_i},\quad \text{if}\quad p_i>q_i.
\end{split}
\end{equation*}
Let $T_1$ and $T_2$ be two (symmetrically) non-degenerate CZOs on $\R^{d_1}$ and $\R^{d_2}$,
respectively, and $b \in L^2_{\loc}(\R^{d})$, $d=d_1+d_2$. 
Then $\|[T_1, [b, T_2]]\|_{L^{p_1}L^{p_2}\to L^{q_1}L^{q_2}}$
has upper and lower bounds according to the following table:
\begin{center}
	\begin{tabular}{| c | c | c | c |}
		\hline
		$\vec{p}, \vec{q}$  & $p_1<q_1$ & $p_1=q_1$ & $p_1>q_1$ \\ 
		\hline
		& & & $\lesssim\|b\|_{\dot L^{r_1}(\dot C^{0,\beta_2})}$ \\
		$p_2<q_2$
        & $\sim\|b\|_{\dot C^{0,\beta_1}(\dot C^{0,\beta_2})}$
        & $\sim\|b\|_{\dot C^{0,\beta_2}(\BMO)}$ 
        &  $\gtrsim\|b\|_{\dot C^{0,\beta_2}(\dot L^{r_1})}$ \\ 
		\hline
		&
		& $\lesssim \|b\|_{\BMO_{\rm prod}}$
		&  \\
		$p_2=q_2$ 
        &  $\sim\|b\|_{\dot C^{0,\beta_1}(\BMO)}$ 
        & $\gtrsim \|b\|_{\BMO_{\operatorname{rect}}}$ 
        &  $\lesssim\|b\|_{\dot L^{r_1}(\BMO)}$ \\ 
		\hline
		$p_2>q_2$ & $\sim\| b\|_{\dot C^{0,\beta_1}(\dot L^{r_2})}$
		& $\lesssim\| b\|_{\BMO(\dot L^{r_2})}$
		& $\lesssim\| b\|_{\dot L^{r_1}(\dot L^{r_2})}$	\\
		\hline
	\end{tabular}
\end{center}
Only the lower bounds require the SIOs (or rather the kernels) to be non-degenerate,
and only the upper bounds require the a priori $L^2$ boundedness of the underlying SIOs (that is CZOs).

The off-diagonal cases that do not naturally involve
the space $\dot L^r$ are all satisfactory (indicated by $\sim$ in the table),
while most of the other cases are not yet fully resolved unlike in the one-parameter
case \cite{HyLpq2021}. The diagonal is special and of special interest as well. The product $\BMO$ upper
bound in this generality of arbitrary CZOs was proved in \cite{DaO} (previous special
cases were known, notably \cite{FerSad}) -- the rectangular
$\BMO$ lower bound recorded above for general non-degenerate CZOs does not match this.
The diagonal lower bound in terms of product $\BMO$ 
for special singular integrals (Hilbert/Riesz transforms)
has a somewhat complicated history 
and is considered especially difficult. While the product $\BMO$ of Chang and Fefferman
is still believed to be the characterizing condition, the existing proofs 
have been reported to have a gap,
see e.g. \cites{Ferguson2002, HTV2021ce, AHLMO2021}, and to the best of our knowledge
a fix has not yet been published. Due to this, and also due to the fact that
we deal with completely general non-degenerate CZOs, our aim in the diagonal
is simply to provide sufficiency in terms of a natural product $\VMO$ space.
However, we do characterize compactness in multiple off-diagonal cases -- in fact,
we use those characterizations to derive the diagonal case. Before saying more, let us discuss
compactness in the one-parameter situation.

Let $X^{p, q}$ denote the space that characterizes
the boundedness $[b,T] \colon L^p \to L^q$ for
given exponents $p,q \in (1, \infty)$. That is, let $X^{p,p} := \BMO$,
$X^{p, q} := \dot C^{0, \beta}$ for $p < q$, and $X^{p,q} := \dot L^r$ for $p > q$
using the notation from above. Then, compactness is characterized
by the space
$$
    Y^{p, q} := \overline{C^{\infty}_c}^{X^{p, q}}.
$$
The case $p = q$ is by Uchiyama \cite{Uch1978}, the case $p < q$
by Guo, He, Wu and Yang \cite{GHWY21} and the case $p > q$ by Hyt\"onen, Li, Tao and Yang
\cite{HLTY2023}. Notice that for $p > q$ this means that compactness
and boundedness are both equivalent with $b \in \dot L^r$, which is again
somewhat curious. There are also some nuances in the way this should be stated when $\beta \ge 1$ --
however, it turns out even $\beta = 1$ leads to constants functions \cite{GHWY21},
and hence the above compactness characterizations in terms of $Y^{p,q}$ in fact only concerns
the case $\beta < 1$. Now, the way we have formulated these characterizations 
is in terms of the approximability of the symbol.  It is non-trivial
to characterize the spaces $Y^{p, q}$ in terms of vanishing mean oscillation ($\VMO$)
type conditions, or some vanishing H\"older type pointwise conditions.
In particular, their bi-parameter versions, are important to us. For us, all of these
characterizations are equally important
-- approximability is needed in our
sufficiency proofs via an extrapolation argument, while the VMO type conditions,
due to their oscillatory characterizations, appear in the necessity proofs
in connection with approximate weak factorization methods.

Before discussing additional details of our method, we state our main theorem.
First, we deliberately limit the scope of this paper to exponents corresponding with the $2 \times 2$
square in the upper left corner of the boundedness table above. That is,
we restrict to situations not involving $\dot L^r$ type spaces, which, if involved, usually
require slightly different methods. 
\begin{thm}\label{thm:main} Let $b\in L^2_{\loc}(\R^d)$ and $T_i$
    be non-degenerate CZOs on $\R^{d_i}$, $i=1,2$. Then, the compactness of the bi-commutator 
	$$
	[T_1,[b,T_2]]\in\calK(L^{p_1,p_2},L^{q_1,q_2}),\qquad p_1,p_2,q_1,q_2\in (1,\infty)
	$$
	has sufficient and necessary conditions according to the following table 
\begin{center}
	\begin{tabular}{ |c | c | c|  }
		\hline
		$\vec{p},\vec{q}$& $p_1<q_1$ & $p_1=q_1$  \\ 
		\hline 
			& &  \\
		$p_2 < q_2$ &   iff $\VMO^{\beta_1,\beta_2}$
		&    iff $\VMO^{0,\beta_2}$ \\ 
		\hline
		& &  suff. $\VMO_{\op{prod}}$  \\
	$p_2=q_2$ 	&   iff $\VMO^{\beta_1,0}$
		& nec. $\VMO_{\op{rect}}$ \\ 
		\hline
	\end{tabular}
\end{center}
\end{thm}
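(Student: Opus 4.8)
The plan is to prove sufficiency and necessity separately. For sufficiency I would anchor the whole argument at the fully off-diagonal corner $p_1<q_1$, $p_2<q_2$ and then reach the other three exponent configurations by a new off-diagonal extrapolation tailored to bi-commutators. For necessity I would combine the rectangular approximate weak factorization with a bi-parameter version of Uchiyama's non-compactness argument. The relevant symbol classes have two faces --- a vanishing mean oscillation (or vanishing H\"older) description and approximability by $C^\infty_c$ in the boundedness norm --- and these get used in opposite directions, so a recurring auxiliary task is to prove their equivalence through new bi-parameter approximation lemmas. (As in one parameter, the H\"older exponents are implicitly $<1$, the value $1$ forcing constancy in that variable.)

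\smallskip

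\noindent\textbf{Sufficiency.} Assume first $p_1<q_1$, $p_2<q_2$ and $b\in\VMO^{\beta_1,\beta_2}$. The first step is to show that $C^\infty_c(\R^d)$ is dense in $\VMO^{\beta_1,\beta_2}$ for the $\dot C^{0,\beta_1}(\dot C^{0,\beta_2})$ norm; I would prove this by mollifying and truncating one parameter at a time --- a joint mollification would destroy the product structure --- and checking that the vanishing-oscillation conditions (shrinking scales, blowing-up scales, and escape of the support to infinity, now indexed by a pair of scales in each parameter) are precisely what drives each error term to zero. Granting this density, for $b\in C^\infty_c$ one verifies that $[T_1,[b,T_2]]$ is compact $L^{p_1}L^{p_2}\to L^{q_1}L^{q_2}$ by checking the Fr\'echet--Kolmogorov criterion in mixed-norm Lebesgue spaces (equicontinuity of translates and uniform smallness at spatial infinity), with the strict gains $p_i<q_i$ supplying the room to truncate the standard kernels of $T_1,T_2$ and to absorb a Rellich--Kondrachov-type loss coming from the smoothness of $b$. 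Finally, for general $b\in\VMO^{\beta_1,\beta_2}$ take $b_n\in C^\infty_c$ with $b_n\to b$ in $\dot C^{0,\beta_1}(\dot C^{0,\beta_2})$; the $\sim$-bound of \cite{AHLMO2021} gives $\|[T_1,[b_n,T_2]]-[T_1,[b,T_2]]\|\to 0$, and a norm limit of compact operators is compact. The remaining three cases then follow by running the new off-diagonal extrapolation one parameter at a time, lowering $p_i<q_i$ to $p_i=q_i$ in a way that \emph{propagates compactness} (not merely boundedness) while trading the H\"older smoothness of $b$ in that parameter for a $\BMO$-type condition; this produces sufficiency of $\VMO^{0,\beta_2}$, of $\VMO^{\beta_1,0}$, and of $\VMO_{\op{prod}}$ in the fully diagonal corner. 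For that last endpoint one also invokes the product $\BMO$ boundedness of \cite{DaO} together with a density lemma for $\VMO_{\op{prod}}$, so that the limit of the (now diagonally compact) bi-commutators of nice symbols is again compact.

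\smallskip

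\noindent\textbf{Necessity.} Suppose $[T_1,[b,T_2]]\in\calK(L^{p_1,p_2},L^{q_1,q_2})$. Compactness forces boundedness, so \cite{AHLMO2021} already places $b$ in the relevant boundedness space, and the task is to upgrade this membership to the vanishing version. Using the oscillatory description of $\VMO$, it suffices to show that the rectangular mean oscillation $\fint_R|b-\ave{b}_R|$ in the diagonal directions --- or the corresponding H\"older quotient in the off-diagonal directions --- tends to zero as the rectangle $R$ shrinks, blows up, or escapes to spatial infinity. If this failed along a sequence $R_n$, the rectangular approximate weak factorization would express $\mathbf{1}_{R_n}(b-\ave{b}_{R_n})$, up to a small error, through the action of $T_1$ and $T_2$ on normalized bumps adapted to $R_n$; in the manner of Uchiyama, this yields normalized inputs whose images under $[T_1,[b,T_2]]$ stay bounded below yet concentrate or spread out so as to fail precompactness, contradicting compactness. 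Since the factorization is rectangular rather than product-type, the diagonal case only yields $\VMO_{\op{rect}}$, consistent with the boundedness table. In the three cases with at least one strict inequality, combining this with the density lemma above closes the equivalence with the approximability formulation.

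\smallskip

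\noindent\textbf{Main obstacle.} I expect the crux to be the package of bi-parameter approximation results --- density of $C^\infty_c$ in $\VMO^{\beta_1,\beta_2}$, in the mixed H\"older--$\BMO$ spaces, and in $\VMO_{\op{prod}}$ --- together with the off-diagonal extrapolation that transfers compactness between exponent configurations. Both must be compatible with the tensor structure, so the one-parameter mollification and extrapolation techniques have to be performed parameter by parameter with careful bookkeeping of the several ``directions to infinity'' (small scales, large scales, far-away supports) present in each parameter. The bi-parameter Uchiyama construction on the necessity side is a parallel difficulty: the test rectangles can degenerate in many ways simultaneously, so both the weak factorization and the non-precompactness estimate must be uniform over all of them.
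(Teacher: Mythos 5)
Your overall architecture --- anchor sufficiency at the corner $p_1<q_1$, $p_2<q_2$, extrapolate to the other cells, and prove necessity by combining rectangular approximate weak factorization with an Uchiyama-type non-compactness scheme --- is the paper's. The necessity half matches the paper's proof in all essentials: the paper implements ``fail precompactness'' via a lemma forbidding a bounded sequence with disjointly projected supports whose images converge to a nontrivial limit, applied to $[T_1,[b,T_2]]$ or to its adjoint according to whether the rectangles $R_j$ or their reflected partners $\wt R_j$ can be made disjoint (the extraction of a disjoint subsequence in the degenerating cases is a nontrivial combinatorial step you gloss over, but the idea is the same). For the strictly fractional corner you take a genuinely different, and viable, route: density of $C^{\infty}_c$ in $\VMO^{\beta_1,\beta_2}$, Fr\'echet--Kolmogorov for smooth symbols, then a norm limit using the $\sim$-bound. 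The paper instead truncates the operators, $T_i=T_{i,C}+T_{i,E}$, proves Fr\'echet--Kolmogorov compactness of $[T_{1,C},[b,T_{2,C}]]$ for merely continuous $b$, and controls the error terms by the pointwise vanishing H\"older characterization of $\VMO^{\beta_1,\beta_2}$; this avoids needing the density theorem in that corner (the paper proves it anyway, by exactly the one-parameter-at-a-time truncation and mollification you describe).

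The genuine gap is the extrapolation step. ``Lowering $p_i<q_i$ to $p_i=q_i$ one parameter at a time while trading H\"older smoothness for a $\BMO$-type condition'' names the goal but supplies no mechanism, and no standard extrapolation propagates \emph{compactness} across exponent configurations. The paper's device is: first approximate $b$ in the boundedness norm $X^{\vec p,\vec q}$ by bounded, compactly supported symbols (here the approximation theorems such as $\VMO^{\beta_1,0}=\overline{C^{\infty}_c}^{\BMO^{\beta_1,0}}$ are indispensable; in the diagonal $\VMO_{\op{prod}}$ is \emph{defined} as such a closure, so no extra density lemma is invoked there, contrary to what you suggest), reducing to $\supp b\subset I\times J$. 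Then split each $T_i$ by the indicators of $I,I^c$ and $J,J^c$; two of the sixteen pieces die on the support of $b$, and each survivor has the form $1_{E}\circ[T_1,[b,T_2]]\circ 1_{F}$ with $E,F$ bounded in the relevant variables. Since $I$ is bounded, multiplication by $1_{I\times J^c}$ maps $L^{p_1,p_2}\to L^{p_1^-,p_2}$ for any $p_1^-<p_1$ by H\"older, and since $J$ is bounded, $1_{I^c\times J}$ maps $L^{q_1,q_2^+}\to L^{q_1,q_2}$ for any $q_2^+>q_2$; choosing $p_1^-$ and $q_2^+$ so that the new fractional parameters lie in $(0,1)$ places the middle factor in the already-proved strictly off-diagonal regime, where it is compact because a bounded compactly supported symbol lies in every $\VMO^{\alpha_1,\alpha_2}$. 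A composition bounded--compact--bounded is compact. Without this localization-plus-exponent-perturbation trick, the sufficiency claims in the three non-fractional cells of your table remain unproved.
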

The appearing spaces are defined in full detail in the following introductory sections.
Except in the diagonal, we have multiple different equivalent ways to define them --
all of them important to our methods. In the diagonal $\VMO_{\textup{prod}}$ is defined as the closure
of $C^{\infty}_c$ in the product $\BMO$ norm of Chang--Fefferman -- this
corresponds with Lacey--Wick \cite{LaTeWi06} dealing with the Hilbert case.

Our sufficiency proof works by proving directly the case where $p_1 < q_1$ and $p_2 < q_2$.
Then the other cases are, for the lack of a better word, extrapolated from this range. 
In particular, we critically utilize off-diagonal results to prove diagonal results,
further motivating the results of this paper and \cite{AHLMO2021}. For this method,
the approximability viewpoint is critical. In turn, this means that we will
need to prove results, such as, $\VMO^{\beta_1,0}(\R^{d}) =
\overline{C^{\infty}_c(\R^d)}^{\BMO^{\beta_1,0}(\R^d)}$,
where the appearing $\VMO$ and $\BMO$ spaces are defined using
natural rectangular oscillations. Such density results
are delicate and the new bi-parameter proofs rely, among other things, on recent results
by Mudarra and the second name author \cite{MudOik24}, and constructions by Uchiyama \cite{Uch1978}.

Necessary conditions are proved differently. They are based on recent bi-parameter rectangular
weak factorization methods proved in \cite{AHLMO2021}. Those are non-trivial
generalizations of the original one-parameter method \cite{HyLpq2021}, where
oscillatory characterizations of the appearing functions spaces are dualized, and the dualizing
functions are decomposed in a way that generate the bi-commutator together
with some error terms. We combine these type of arguments, where we highlight that
it is critical to start from the oscillatory characterization (as opposed to an approximability
characterization), with a version of an algorithm of Uchiyama \cite{Uch1978} that is
used to obtain that compactness cannot hold for symbols outside of our
$\VMO$ type spaces.

\subsection*{Acknowledgements}
This material is based upon work supported by the National Science Foundation
under Grant No. 2247234 (H. Martikainen). H.M. was, in addition, supported by the
Simons Foundation through MPS-TSM-00002361 (travel support for mathematicians).
T.O. was supported by the Finnish Academy of Science and Letters, and
by the MICINN (Spain) grant no. PID2020-114167GB-I00.

\subsection{Notation and definitions}

\begin{itemize}
\item We identify $f \colon \R^{d_1}\times\R^{d_2} \to \C$ with the family of functions
$f_{x_1} = f(x_1, \cdot):\R^{d_2}\to\C,$ indexed by $x_1\in\R^{d_1},$ and understand that
$$
\| f\|_{L^{p_1}(\R^{d_1}; L^{p_2}(\R^{d_2}))} = \|  \| f_{x_1}(x_2)  \|_{L^{p_2}_{x_2}} \|_{L^{p_1}_{x_1}}.
$$
We also denote $L^{p_1}(\R^{d_1}; L^{p_2}(\R^{d_2})) = L^{p_1}_{x_1} L^{p_2}_{x_2} = L^{p_1}L^{p_2} = L^{p_1,p_2}.$
Similar natural interpretations hold for other functions spaces that appear in the article.
\item  
Cubes in $\R^{d_i}$ are generally denoted by $I_i$ --
rectangles $R\subset \R^d := \R^{d_1} \times \R^{d_2}$ then take the form $I_1\times I_2.$
Side length, diameter and and centres are denoted by $\ell(I_i),$ $\diam(I_i)$ and $c_{I_i}$,
respectively. In the last Section \ref{sect:NecesaryConditions},
to make additional room for subindices,
we recast $I_1\mapsto I$ and $I_2\mapsto J$ 
-- there rectangles $R\subset \R^d$, thus, take the form $I\times J$.
\item Averages are denoted by
    $$
        \langle f \rangle_A := \fint_A f = \frac{1}{|A|} \int_A f,
    $$
    where $|A|$ is the Lebesgue measure of $A$.
\item 
Various operators, initially defined only in one of the spaces $\R^{d_i}$,
act on the product space $\R^d$ in the natural way.
For instance, $T_1$ is a CZO on $\R^{d_1}$ but it can act on
functions $f$ defined in $\R^d$ via
$$
    T_1f(x) := T_1(f(\cdot,x_2))(x_1).
$$
Similarly, if $I_2 \subset \R^{d_2}$ is a cube, we define the average of $f$ over $I_2$
to be the function
$$
    \ave{f}_{I_2}(x_1) :=  \ave{f(x_1,\cdot)}_{I_2}.
$$
\item Natural vectors of exponent or coefficient pairs may appear throughtout, such as,
$\vec{p} := (p_1,p_2)$ and $\vec{\beta} := (\beta_1,\beta_2)$.
This then leads to natural notation, such as, $\dot{C}^{0,\beta_1,\beta_2} :=
\dot{C}^{0,\vec{\beta}}$.
\item 
We denote $A \lesssim B$, if $A \leq C B$ for some constant $C>0$ depending only
on parameters like integration exponents or the dimension $d$ that are not important to track.
Furthermore, we set $A \sim B$, if $A \lesssim B$ and $B \lesssim  A$.
Subscripts or variables on constants and quantifiers, such as, $C_{a},C(a)$ and
$\lesssim_{a}$, signify their dependence on those subscripts.

\item We call 
    $$
        K_i \colon \R^{d_i} \times \R^{d_i} \setminus \{(x_i,y_i) \in \R^{d_i} 
        \times \R^{d_i} \colon x_i =y_i \} \to \C
    $$
    a standard Calder\'on-Zygmund kernel on $\R^{d_i}$ if we have
    $$
        |K(x_i,y_i)| \le \frac{C}{|x_i-y_i|^{d_i}}
    $$
    and, for some $\alpha_i \in (0,1]$, we have
    \begin{equation}\label{eq:holcon}
        |K(x_i, y_i) - K(x_i', y_i)| + |K(y_i, x_i) - K(y_i, x_i')|
        \le C\frac{|x_i-x_i'|^{\alpha_i}}{|x_i-y_i|^{d_i+\alpha_i}}
\end{equation}
whenever $|x_i-x_i'| \le |x_i-y_i|/2$. 

A singular integral operator (SIO) is a linear operator $T_i$ on $\R^{d_i}$
(initially defined, for example, on bounded and compactly supported functions) so that there is a standard kernel
$K_i$ for which
$$
    \langle T_if ,g \rangle = \iint_{\R^{d_i} \times \R^{d_i}} K_i(x_i,y_i) f(y_i) g(x_i) \ud y_i \ud x_i
$$
whenever the functions $f$ and $g$ are nice and have disjoint supports.
A Calder\'on--Zygmund operator (CZO) is an SIO $T_i$, which is bounded
from $L^p(\R^{d_i}) \to L^p(\R^{d_i})$ for all (equivalently for some) $p \in (1,\infty)$.
The $T1$ theorem  says that an SIO is a CZO if and only if
$$
    \int_{I_i} |T_i1_{I_i}| + \int_{I_i} |T_i^*1_{I_i}| \lesssim |I_i|
$$
for all cubes $I_i \subset \R^{d_i}$. Here $T_i^*$ is the linear adjoint of $T_i$.
 
\end{itemize}

\section{On relevant function spaces and their properties} 

\subsection{Product $BMO/VMO$} 
We begin by defining product $\BMO$ and then product $\VMO$ -- these
are the spaces that are relevant for the boundedness and compactness, respectively,
of bi-commutators in the diagonal $L^p \to L^p$ or, more generally,
in the mixed-norm diagonal $L^{p_1}L^{p_2} \to L^{p_1}L^{p_2}$.
Although, the necessity of these conditions is not strictly speaking known,
see the Introduction.

We define the product $\BMO$ as the supremum of dyadic product $\BMO$ norms as follows:
\[
\|b \|_{\BMO_{\op{prod}}(\R^d)} 
:= \sup_{\calD = \calD^1\times\calD^2} \sup_{\Omega}\Big(\frac{1}{|\Omega|}
\sum_{\substack{ R\in\calD \\ R\subset\Omega}}|\langle b,h_R \rangle|^2\Big)^{1/2},
\]
where the supremum is taken over all dyadic product lattices $\calD$
and over all non-empty open subsets $\Omega \subset \R^d$
of finite measure. Here $h_R$ is a cancellative Haar function in the rectangle
$R=I_1\times I_2$ with $I_i\in\calD^i$. This product $\BMO$ of Chang and Fefferman
is a rather complicated space, but we do not need to work with it via the definition
in this paper. 

We also point out that this space is well-known to satisfy John--Nirenberg in the following form
$$
    \sup_{\Omega}\Big(\frac{1}{|\Omega|}
    \sum_{\substack{ R\in\calD \\ R\subset\Omega}}|a_R|^2\Big)^{1/2}
    \sim
    \sup_{\Omega} \frac{1}{|\Omega|^{1/p}} \Big\| \Big(
    \sum_{\substack{ R \in \calD \\ R \subset \Omega}} |a_R|^2 \frac{1_R}{|R|} \Big)^{1/2} \Big\|_{L^p},
    \qquad 0 < p < \infty.
$$
Thus, there is nothing special about the exponent $2$ in the definition of product $\BMO$.
In fact, this will be the case for most of the spaces appearing in this paper,
and we will often even provide a quick argument showing that.
There is one notable exception -- the basic, non-fractional
in both parameters ($\beta_1 = \beta_2 = 0$), rectangular $\BMO$ (the space $\BMO_{v_1, v_2}^{0,0}$,
see \eqref{eq:rectBMOdefn} below)
does not satisfy John--Nirenberg and thus may depend on the exponents used in the definition.
Essentially, when the space characterizes boundedness of commutators (or in the diagonal
is believed to do so), it will satisfy John--Nirenberg.
This is also the case for the various \emph{fractional} (but rectangular) $\BMO$ spaces
that are the correct spaces in off-diagonal situations.
 It is only in the full diagonal, where the rectangular
version is not the correct space (and does not even satisfy John--Nirenberg)
and the more complicated product $\BMO$ variant involving general open sets $\Omega$ is required.
We will only encounter the rectangular non-fractional $\BMO/\VMO$ (with some exponents, remember that they now matter)
in the diagonal in connection with necessary conditions --
again, these do not match the known sufficient conditions involving the product variants.

We define product VMO in terms of approximability by nice functions. 
\begin{defn} Define 
	\[
	\VMO_{\op{prod}}(\R^d) := \overline{C^{\infty}_c(\R^d)}^{\BMO_{\op{prod}}(\R^d)}.
	\]
\end{defn}
\begin{rem}
	Notice that $C^{\infty}_c\subset \BMO_{\op{prod}}$ so that the definition is reasonable.
    Indeed, one can go through the easier little $\BMO$ space $\operatorname{bmo}$ defined by
    $$
        \|b\|_{\operatorname{bmo}}
        := \sup_{R \subset \R^d \textup{ is a rectangle}} \fint_R |b-\langle b \rangle_R|.
    $$
    This is because
	\[
	    C^{\infty}_c \subset L^{\infty} \subset  \operatorname{bmo} \subset \BMO_{\op{prod}}.
	\]
	The last inclusion is non-trivial but well-known -- for a recent reference see
    \cite[Appendix A]{LMV2019biparBloom}.
\end{rem}

\subsection{H\"older spaces and rectangular fractional $\BMO$/$\VMO$}
For $b \in L^{1}_{\loc}$ and a rectangle $R = I_1 \times I_2$ we denote
\begin{equation*}
    \osc_{v_1,v_2}(b,R)
    :=\frac{\Norm{b-\langle b\rangle_{I_1} -\langle b\rangle_{I_2} 
    		+\langle b\rangle_{R}}{L^{v_1}_{x_1}L^{v_2}_{x_2}(R)}}{|I_1|^{1/v_1} |I_2|^{1/v_2}}, \qquad 1 \le v_i < \infty.
\end{equation*}
If $v_1=v_2=1$ then we abbreviate
$\osc(b,R) := \osc_{1,1}(b,R)$. Mostly, due to John-Nirenberg (see the discussion above),
this choice of exponents is natural.
Given some $\beta_1, \beta_2$ we then also define 
$$
    \calO^{\beta_1, \beta_2}_{v_1, v_2}(b, R) :=  \ell(I_1)^{-\beta_1}\ell(I_2)^{-\beta_2} \osc_{v_1,v_2}(b, R).
$$
If the numbers $\beta_i$ are strictly positive, we think of the spaces below as being fractional.
We define the (fractional in parameter $i$ if $\beta_i > 0$) rectangular $\BMO$ norm
\begin{equation}\label{eq:rectBMOdefn}
    \|b\|_{\BMO^{\beta_1, \beta_2}_{v_1, v_2}} =
    \sup_{R=I_1 \times I_2} \calO^{\beta_1, \beta_2}_{v_1, v_2}(b, R).
\end{equation}

\subsubsection*{Purely fractional case}
We discuss the strictly fractional case, where $\beta_i > 0$ for $i=1,2$.
The bi-parameter homogeneous H\"older norm, for $\beta_i > 0$, is
\begin{align*}
    \|b\|_{\dot C^{0, \beta_1, \beta_2}} &:= \|b\|_{\dot C^{0, \beta_1}_{x_1}(\dot C^{0, \beta_2}_{x_2}) }
    = \|b\|_{\dot C^{0, \beta_2}_{x_2}(\dot C^{0, \beta_1}_{x_1}) }  \\
                                               &= \sup_{x_1 \ne y_1} \Big\|\frac{b(x_1, \cdot)- b(y_1, \cdot)}
                                               {|x_1-y_1|^{\beta_1}} \Big\|_{\dot C^{0, \beta_2}}
                                               = \sup_{x_2 \ne y_2} \Big\|\frac{b(\cdot, x_2)- b(\cdot, y_2)}
                                               {|x_2-y_2|^{\beta_1}} \Big\|_{\dot C^{0, \beta_2}} \\
                                               &= \sup_{\substack{x_1\neq y_1\\ x_2\neq y_2}}
                                               \frac{|b(x_1, x_2)-b(x_1, y_2)-b(y_1,x_2)+b(y_1, y_2)|}
                                               {|x_1-y_1|^{\beta_1}|x_2-y_2|^{\beta_2}}.
\end{align*}
In this purely fractional case, the above defined fractional $\BMO$ norm agrees with the homogeneous H\"older norm.
The result below is essentially \cite[Proposition 3.5]{AHLMO2021} with general $v_i$ instead of $v_i = 1$.
While we mostly care about the corresponding $\VMO$ variants of such results (since we deal with
compactness in this paper), the proof is useful as
variants of it are used later in the analogous $\VMO$ results.
\begin{lem}
    Let $1 \le v_i < \infty$ and $\beta_i > 0$ for $i=1,2$. Then for $b \in L^1_{\loc}$ we have
    $$
         \|b\|_{\dot C^{0, \beta_1, \beta_2}} \sim \|b\|_{\BMO^{\beta_1, \beta_2}_{v_1, v_2}}. 
    $$
\end{lem}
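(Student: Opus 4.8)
The plan is to prove the two inequalities separately, and in each case it suffices — by a standard tensor-type/Fubini argument that reduces the bi-parameter oscillation to a one-parameter oscillation in each variable with the other variable frozen — to understand the one-parameter statement $\|g\|_{\dot C^{0,\beta}} \sim \sup_I \ell(I)^{-\beta} |I|^{-1/v}\|g - \langle g\rangle_I\|_{L^v(I)}$ and then iterate it in the two parameters. For the direction $\|b\|_{\BMO^{\beta_1,\beta_2}_{v_1,v_2}} \lesssim \|b\|_{\dot C^{0,\beta_1,\beta_2}}$, I would bound the rectangular oscillation directly: on a rectangle $R = I_1\times I_2$, the fourth difference $b(x_1,x_2)-b(x_1,y_2)-b(y_1,x_2)+b(y_1,y_2)$ is pointwise $\le \|b\|_{\dot C^{0,\beta_1,\beta_2}}\ell(I_1)^{\beta_1}\ell(I_2)^{\beta_2}$ for $x_i,y_i\in I_i$; integrating $y=(y_1,y_2)$ against $|I_1|^{-1}|I_2|^{-1}1_R$ replaces the constant-subtracted quantity by exactly $b - \langle b\rangle_{I_1} - \langle b\rangle_{I_2} + \langle b\rangle_R$, so taking the $L^{v_1}_{x_1}L^{v_2}_{x_2}(R)$ norm and dividing by $|I_1|^{1/v_1}|I_2|^{1/v_2}$ gives the claim. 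This direction holds for every $v_i\in[1,\infty)$ with no John--Nirenberg input needed.

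For the reverse direction $\|b\|_{\dot C^{0,\beta_1,\beta_2}} \lesssim \|b\|_{\BMO^{\beta_1,\beta_2}_{v_1,v_2}}$, I would first reduce to $v_1=v_2=1$ by Hölder's inequality ($\osc_{1,1}(b,R)\le \osc_{v_1,v_2}(b,R)$ by Jensen, so the $v_i=1$ norm is the smallest, and it suffices to control the Hölder norm by that smallest one — equivalently, any $v_i\ge1$ norm dominates it). Then, with $v_i=1$, I would run the classical telescoping/chaining argument of Morrey--Campanato adapted to the bi-parameter rectangular setting: to estimate the fourth difference at points $(x_1,x_2)$ and $(y_1,y_2)$, connect $x_i$ to $y_i$ through a chain of dyadically shrinking rectangles $R_0 \supset R_1 \supset \cdots$ (comparable cubes in each parameter), and telescope $b - \langle b\rangle_{I_1} - \langle b\rangle_{I_2} + \langle b\rangle_R$ across consecutive scales; each step contributes $\lesssim \ell(I_1^{(k)})^{\beta_1}\ell(I_2^{(k)})^{\beta_2}\|b\|_{\BMO^{\beta_1,\beta_2}_{1,1}}$ and the geometric series in $\beta_1,\beta_2>0$ converges, yielding the pointwise Hölder bound after passing from averages over rectangles to values at Lebesgue points.

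The main obstacle is the bi-parameter bookkeeping in the chaining step: unlike the one-parameter case one must telescope a \emph{mixed} second difference (the $\langle b\rangle_{I_1}$ and $\langle b\rangle_{I_2}$ partial averages are themselves functions, not constants), so the chain has to be run essentially as an iterated one-parameter argument — fix $x_2$, chain in $x_1$ from $y_1$ to $x_1$, controlling $\|b(x_1,\cdot) - b(y_1,\cdot) - \langle b(x_1,\cdot)-b(y_1,\cdot)\rangle_{I_2}\|_{L^1(I_2)}/|I_2|$ at each scale, and then chain in $x_2$ — and one must check that the rectangular $\BMO$ bound indeed controls each such intermediate quantity (it does, since that quantity is exactly $\osc(b,R)$-type up to the constant normalization). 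Since this is precisely \cite[Proposition 3.5]{AHLMO2021} with the trivial upgrade of general $v_i$ absorbed by Hölder/Jensen at the very start, I expect the proof to be short and to cite that proposition for the $v_i=1$ core.
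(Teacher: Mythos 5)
Your proposal is correct and follows essentially the same route as the paper: the easy direction is the identical pointwise-integration argument (with no John--Nirenberg needed), and the hard direction is reduced to $v_1=v_2=1$ by Jensen and then handled by a Campanato-type telescoping over dyadically shrinking rectangles whose geometric series converges because $\beta_i>0$ -- exactly the paper's expansion \eqref{eq:H1}, which performs the two-parameter telescoping simultaneously (four corner chains) rather than in your iterated one-variable-at-a-time form, a purely organizational difference. The paper also notes the result is essentially \cite[Proposition 3.5]{AHLMO2021} with general $v_i$, so your suggestion to cite that for the $v_i=1$ core is consistent with the source.
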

\begin{proof}
    For $R = I_1 \times I_2$ notice that for all $x_i \in I_i$ we have
    \begin{align*}
        \abs{b(x_1,x_2)-&\ave{b}_{I_1}(x_2)-\ave{b}_{I_2}(x_1)+\ave{b}_{I_1\times I_2}} \\
                        &\le \fint_{I_1\times I_2}|b(x_1, x_2)-b(x_1,y_2)-b(y_1,x_2)+b(y_1,y_2)|\ud y_1\ud y_2 \\
                        &\lesssim \|b\|_{\dot C^{0, \beta_1, \beta_2}} \ell(I_1)^{\beta_1} \ell(I_2)^{\beta_2},
    \end{align*}
    and so $\|b\|_{\BMO^{\beta_1, \beta_2}_{v_1, v_2}} \lesssim \|b\|_{\dot C^{0, \beta_1, \beta_2}}.$

    For the converse, fix $x_1 \ne y_1$ and $x_2 \ne y_2$. We define
    \begin{align}\label{eq:expand}
    	   x_{i, k}:=(1-2^{-k})x_i+2^{-k}y_i \qquad \textup{and} \qquad y_{i,k} :=(1-2^{-k})y_i+2^{-k}x_i,
    \end{align}
    and note that $x_{i, 1} = y_{I} =\frac12(x_i+y_i)$.
    If $Q_k(x_i) := Q(x_{i, k}, 2^{-k}\abs{x_i-y_i})$ is the cube centered at $x_{i, k}$ and of side-length
    $2\cdot 2^{-k}\abs{x_i-y_i}$, we easily check that $Q_{k+1}(x_i)\subset Q_k(x_i)$ and
    $Q_k(x_i) \subset B(x_i, C2^{-k}|x_i-y_i|)$. 
    Write $R_{k_1, k_2}(w_1, w_2) := Q_{k_1}(w_1) \times Q_{k_2}(w_2)$ for $w_i \in \{x_i, y_i\}$.
    We have
    \begin{equation}\label{eq:H1}
        \begin{split}
            &b(x_1, x_2)-b(x_1,y_2)-b(y_1,x_2)+b(y_1,y_2) \\
        &=  \sum_{k_1, k_2 =0}^{\infty} \big[ \bave{b}_{R_{k_1+1, k_2+1}(x_1, x_2)} 
            - \bave{b}_{R_{k_1, k_2+1}(x_1, x_2)}
        -\bave{b}_{R_{k_1+1, k_2}(x_1,x_2)}  + \bave{b}_{R_{k_1,k_2}(x_1,x_2)} \big] \\
        &- \sum_{k_1, k_2 =0}^{\infty} \big[ \bave{b}_{R_{k_1+1, k_2+1}(x_1, y_2)} 
            - \bave{b}_{R_{k_1, k_2+1}(x_1, y_2)}
        -\bave{b}_{R_{k_1+1, k_2}(x_1,y_2)}  + \bave{b}_{R_{k_1,k_2}(x_1,y_2)} \big] \\
        &- \sum_{k_1, k_2 =0}^{\infty} \big[ \bave{b}_{R_{k_1+1, k_2+1}(y_1, x_2)} 
            - \bave{b}_{R_{k_1, k_2+1}(y_1, x_2)}
        -\bave{b}_{R_{k_1+1, k_2}(y_1,x_2)}  + \bave{b}_{R_{k_1,k_2}(y_1,x_2)} \big] \\
        &+ \sum_{k_1, k_2 =0}^{\infty} \big[ \bave{b}_{R_{k_1+1, k_2+1}(y_1, y_2)} 
            - \bave{b}_{R_{k_1, k_2+1}(y_1, y_2)}
        -\bave{b}_{R_{k_1+1, k_2}(y_1,y_2)}  + \bave{b}_{R_{k_1,k_2}(y_1,y_2)} \big].
        \end{split}
    \end{equation}
    Consider, for instance, the first term from above -- it can be dominated by
    \begin{align*}
        \sum_{k_1, k_2 =0}^{\infty}&  \fint_{R_{k_1, k_2}(x_1, x_2)}\abs{ b-\ave{b}_{Q_{k_1}(x_1),1}-
        \ave{b}_{Q_{k_2}(x_2), 2} +\ave{b}_{R_{k_1, k_2}(x_1,x_2)}} \\
                                   &\lesssim \sum_{k_1, k_2 =0}^{\infty} (2^{-k_1} |x_1-y_1|)^{\beta_1}
                                   (2^{-k_2}|x_2-y_2|)^{\beta_2} \calO^{\beta_1, \beta_2}_{1, 1}(b, R_{k_1,k_2}(x_1,x_2)) \\
                                   &\lesssim |x_1-y_1|^{\beta_1} |x_2-y_2|^{\beta_2}
                                   \|b\|_{\BMO^{\beta_1, \beta_2}_{1, 1}} 
                                   \le  |x_1-y_1|^{\beta_1} |x_2-y_2|^{\beta_2} \|b\|_{\BMO^{\beta_1, \beta_2}_{v_1, v_2}}. 
    \end{align*}
\end{proof}
In particular, if $\beta_1, \beta_2 > 0$, the choice of the exponents $v_i \ge 1$ does not matter, and we might as well
use $v_1 = v_2 = 1$ and denote
$$
    \BMO^{\beta_1, \beta_2} =  \BMO^{\beta_1, \beta_2}_{1, 1}.
$$
Since we consider compactness in this paper, we are mainly interested in the corresponding $\VMO$ spaces,
and we discuss them next. 

\begin{defn}
    Let $1 \le v_i < \infty$ and $\beta_i \ge 0$. 
    We say $b \in \BMO^{\beta_1, \beta_2}_{v_1, v_2}$ belongs to $\VMO^{\beta_1, \beta_2}_{v_1, v_2}$ if
    for all $\varepsilon > 0$ there exists $t > 0$ so that
    $$
    \calO^{\beta_1, \beta_2}_{v_1, v_2}(b, R) < \varepsilon, \qquad R = I_1 \times I_2,
    $$
    whenever $\ell(I_i) < t$ or $\ell(I_i) > t^{-1}$ for $i=1$ or $i=2$, or
    $\dist(R, \{0\}) > t^{-1}$.
\end{defn}
\begin{defn}\label{def:VCbipar}
    Given $\vec\beta = (\beta_1,\beta_2),$ with $\beta_i > 0$, we say $b \in  \dot C^{0, \beta_1, \beta_2}$ belongs to $\dot{\op{VC}}^{0, \beta_1, \beta_2}$
    if for all $\varepsilon > 0$
    there exists $t > 0$ so that 
    $$
        |b(x_1, x_2) - b(x_1, y_2) - b(y_1, x_2) + b(y_1, y_2)| \le
        \varepsilon |x_1-y_1|^{\beta_1} |x_2-y_2|^{\beta_2},
    $$
    whenever $|x_i-y_i| < t$ or $|x_i| > t^{-1}$
    or $|y_i| > t^{-1}$ for $i = 1$ or $i=2$.
\end{defn}

\begin{lem}\label{lem:VMO_VC_pure_frac}
    Let $1 \le v_i < \infty$ and $\beta_i  > 0$. Then, we have $\VMO^{\beta_1, \beta_2}_{v_1, v_2} = \dot{\op{VC}}^{0, \beta_1, \beta_2}$.
\end{lem}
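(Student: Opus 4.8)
The plan is to prove the two inclusions separately, using the quantitative estimates from the preceding lemma as a template but tracking the vanishing conditions carefully. For the inclusion $\dot{\op{VC}}^{0,\beta_1,\beta_2} \subset \VMO^{\beta_1,\beta_2}_{v_1,v_2}$, I would revisit the first half of the proof of the previous lemma: for a rectangle $R = I_1 \times I_2$ and $x_i \in I_i$, the quantity $b(x_1,x_2) - \ave{b}_{I_1}(x_2) - \ave{b}_{I_2}(x_1) + \ave{b}_R$ is bounded by the average over $R$ of $|b(x_1,x_2) - b(x_1,y_2) - b(y_1,x_2) + b(y_1,y_2)|$. If $\ell(I_i) < t$ (resp. $\ell(I_i) > t^{-1}$, or $\dist(R,\{0\})$ large), then for all $y_i \in I_i$ the pairs $(x_i,y_i)$ satisfy $|x_i - y_i| < t$ (resp. lie in the far regime $|x_i|, |y_i| > ct^{-1}$), so the $\dot{\op{VC}}$ estimate gives the integrand $\le \varepsilon |x_1-y_1|^{\beta_1}|x_2-y_2|^{\beta_2} \le \varepsilon \ell(I_1)^{\beta_1}\ell(I_2)^{\beta_2}$, whence $\calO^{\beta_1,\beta_2}_{1,1}(b,R) \lesssim \varepsilon$; since $\beta_i > 0$, the earlier remark that the exponents $v_i$ are irrelevant upgrades this to $\calO^{\beta_1,\beta_2}_{v_1,v_2}(b,R) \lesssim \varepsilon$ (one may also argue directly since $v_i \ge 1$ and the supremum defining the H\"older norm of a single rectangle controls the $L^{v_i}$ average). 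A small point of care: the $\VMO$ definition triggers when the smallness/largeness condition holds for \emph{one} index $i$, so one must check that controlling a single factor $\ell(I_i)^{\beta_i}$ (or a single far-distance condition) already forces $\calO_{v_1,v_2}^{\beta_1,\beta_2}$ small, using the global finiteness $\|b\|_{\BMO^{\beta_1,\beta_2}} < \infty$ to absorb the other, uncontrolled factor — this is exactly parallel to how one checks one-parameter $\VMO$.

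For the reverse inclusion $\VMO^{\beta_1,\beta_2}_{v_1,v_2} \subset \dot{\op{VC}}^{0,\beta_1,\beta_2}$, I would mimic the second half of the previous lemma's proof, i.e. the telescoping identity \eqref{eq:H1}. Fix $x_i \ne y_i$ with, say, $|x_i - y_i| < t$ for some index $i$ (or $|x_i|, |y_i|$ large). Build the chains of cubes $Q_{k_i}(w_i) = Q(w_{i,k_i}, 2^{-k_i}|x_i-y_i|)$ as in \eqref{eq:expand} and expand $b(x_1,x_2) - b(x_1,y_2) - b(y_1,x_2) + b(y_1,y_2)$ as the four double sums of rectangular oscillations over $R_{k_1,k_2}(w_1,w_2)$. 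Each such rectangle has side lengths $\sim 2^{-k_1}|x_1-y_1|$ and $\sim 2^{-k_2}|x_2-y_2|$; in particular the relevant side length in the $i$-th parameter is $\le 2^{-k_i}|x_i-y_i| < t$ when $|x_i-y_i|<t$ and $k_i \ge 0$, so \emph{every} rectangle in the sum is "small in direction $i$" and the $\VMO$ hypothesis gives $\calO^{\beta_1,\beta_2}_{v_1,v_2}(b, R_{k_1,k_2}(w_1,w_2)) < \varepsilon$ for all terms. Summing the geometric series $\sum_{k_1,k_2} (2^{-k_1}|x_1-y_1|)^{\beta_1}(2^{-k_2}|x_2-y_2|)^{\beta_2} \varepsilon \lesssim \varepsilon |x_1-y_1|^{\beta_1}|x_2-y_2|^{\beta_2}$ (convergent since $\beta_i > 0$) yields the desired $\dot{\op{VC}}$ bound. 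The far regime $|x_i| > t^{-1}$ (or $|y_i| > t^{-1}$) is handled the same way: then every $R_{k_1,k_2}(w_1,w_2)$ lies at distance $\gtrsim t^{-1}$ from the origin in the appropriate sense (the cubes $Q_{k_i}(x_i) \subset B(x_i, C2^{-k_i}|x_i-y_i|)$ stay near $x_i$ or $y_i$), so again the $\VMO$ condition applies to all of them. One subtlety: to pass from the $L^{v_i}$-averaged oscillation $\osc_{v_1,v_2}$ appearing in $\calO_{v_1,v_2}$ back to the pointwise average $\osc_{1,1}$ used in the telescoping estimate, invoke $\osc_{1,1}(b,R) \lesssim \osc_{v_1,v_2}(b,R)$, which is just H\"older's inequality on each factor since $v_i \ge 1$ and the oscillation is normalized by $|I_i|^{1/v_i}$.

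I expect the main obstacle to be bookkeeping rather than a genuine mathematical difficulty: precisely matching the "one index suffices" trigger in the $\VMO$ and $\dot{\op{VC}}$ definitions with the product structure of the telescoping sum, and making sure that when only one of the two directions is in the good regime, the other (uncontrolled) direction still produces a convergent geometric factor bounded by $\|b\|_{\BMO^{\beta_1,\beta_2}}$ times the right power of $|x_i - y_i|$ — i.e. one should split the double sum, using $\varepsilon$-smallness in the controlled index and the a priori finite $\BMO^{\beta_1,\beta_2}$ bound in the other, obtaining an overall bound $\lesssim \varepsilon \|b\|_{\BMO^{\beta_1,\beta_2}}^{0}$... more precisely $\lesssim \varepsilon$ after the uncontrolled geometric series is summed against the uniform $\calO \le \|b\|_{\BMO^{\beta_1,\beta_2}}$ bound and the controlled one against $\varepsilon$. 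This is entirely analogous to the one-parameter $\VMO = \dot{\op{VC}}$ argument, just carried out rectangle-wise, so I would present it compactly, referencing the computation in the proof of the previous lemma and only highlighting the places where the vanishing conditions enter.
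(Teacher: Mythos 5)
Your overall framework (average the fourth-order difference over the rectangle in one direction, telescope along the chains of cubes from \eqref{eq:expand} in the other) is the same as the paper's, and your easy cases are fine; but the proposal breaks down at exactly the two delicate triggers, and these are where the paper's proof does its real work. First, in the direction $\dot{\op{VC}}^{0,\beta_1,\beta_2}\subset \VMO^{\beta_1,\beta_2}_{v_1,v_2}$, your treatment of the large-side trigger is wrong: if $\ell(I_1)>t^{-1}$ it is \emph{not} true that all pairs $x_1,y_1\in I_1$ lie in the far regime --- a huge cube may contain the origin, and for $x_1,y_1$ with $|x_1|,|y_1|\le t^{-1}$ and $|x_1-y_1|\ge t$ the $\dot{\op{VC}}$ hypothesis gives nothing. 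The paper fixes this by using two thresholds: it only claims smallness of $\calO^{\beta_1,\beta_2}_{v_1,v_2}(b,R)$ for $\ell(I_1)>t_0^{-1}$ with $t_0\ll t$, splitting the $x_1$-integral over $I_1\cap\{|x_1|>t^{-1}\}$ (where $\dot{\op{VC}}$ applies for every $y$) and $I_1\cap\{|x_1|\le t^{-1}\}$, whose relative measure $\lesssim (t_0/t)^{d_1}$ is beaten against the crude bound $\|b\|_{\dot C^{0,\beta_1,\beta_2}}\ell(I_1)^{\beta_1}\ell(I_2)^{\beta_2}$. Your closing remark about ``absorbing the other, uncontrolled factor'' with $\|b\|_{\BMO^{\beta_1,\beta_2}}$ does not address this: the problem is not the other parameter (a single-index trigger already yields both factors $|x_i-y_i|^{\beta_i}\lesssim \ell(I_i)^{\beta_i}$), it is the bad subset of the \emph{same} cube.

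Second, in the direction $\VMO^{\beta_1,\beta_2}_{v_1,v_2}\subset \dot{\op{VC}}^{0,\beta_1,\beta_2}$, your claim that in the far regime ``every $R_{k_1,k_2}$ lies at distance $\gtrsim t^{-1}$ from the origin'' is false when $|x_1-y_1|$ is large: the initial cubes of the chain have side $\sim 2^{-k_1}|x_1-y_1|$ and center near $y_1$ (not near $x_1$), so they can contain the origin even if $|x_1|>t^{-1}$. Again the cure is a two-threshold argument, proving the $\dot{\op{VC}}$ condition only for $|x_1|\ge Ct_0^{-1}$ with $t_0\ll t$: if $|x_1-y_1|<t_0^{-1}$ all cubes in the chain really are far from the origin, while if $|x_1-y_1|\ge t_0^{-1}$ one splits the $k_1$-sum into cubes with $\ell(Q_{k_1}(x_1))>t^{-1}$ (large-side trigger, hence $\calO<\varepsilon$) and the remaining cubes, where $2^{-k_1}\lesssim t_0/t$ so the geometric tail $\sum 2^{-k_1\beta_1}\lesssim (t_0/t)^{\beta_1}$ makes the crude $\|b\|_{\BMO^{\beta_1,\beta_2}}$ bound small. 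Note that the smallness here comes from this tail factor within a \emph{single} parameter's chain, not from the ``controlled index times uncontrolled index'' product splitting you describe in your last paragraph --- $\calO(b,R_{k_1,k_2})$ is one number per rectangle and the definitions trigger on a single condition, so the split you propose does not correspond to a correct decomposition of the sum. Until the large-side case of the first inclusion and the far-regime case of the second are reworked along these lines (with the auxiliary threshold $t_0=t_0(t,\varepsilon)$ made explicit), the proof is incomplete.
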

\begin{proof}
    Suppose that $b \in \VMO^{\beta_1, \beta_2}_{v_1, v_2}$. In particular, we have
    $b \in\BMO^{\beta_1, \beta_2}$ implying $b \in \dot C^{0, \beta_1, \beta_2}$.
    To prove that $b \in \dot{\op{VC}}^{0, \beta_1, \beta_2}$ we write the decomposition
    \eqref{eq:H1} and estimate again, e.g., the first term arriving at the need to estimate
    $$
        I := \sum_{k_1, k_2 =0}^{\infty} (2^{-k_1} |x_1-y_1|)^{\beta_1}
                                   (2^{-k_2}|x_2-y_2|)^{\beta_2} \calO^{\beta_1, \beta_2}_{v_1, v_2}(b, R_{k_1,k_2}(x_1,x_2)). 
    $$
    Fix $\varepsilon > 0$ and choose $t > 0$ so that
    $$
        \calO^{\beta_1, \beta_2}_{v_1, v_2}(b, R) < \varepsilon, \qquad R = I_1 \times I_2,
    $$
    whenever $\ell(I_i) < t$ or $\ell(I_i) > t^{-1}$ for $i=1$ or $i=2$, or
    $\dist(R, \{0\}) > t^{-1}$. In particular, if we have $|x_1-y_1| < t/2$,
    then we have $\ell(Q_{k_1}(x_1)) = 2 \cdot 2^{-k_1}|x_1-y_1| \le
    2|x_1-y_1| < t$, and so
    $$
        I \le \varepsilon \sum_{k_1, k_2 =0}^{\infty} (2^{-k_1} |x_1-y_1|)^{\beta_1} (2^{-k_2}|x_2-y_2|)^{\beta_2}
            \lesssim \varepsilon |x_1-y_1|^{\beta_1} |x_2-y_2|^{\beta_2}.
    $$
    Similarly, the same holds if $|x_2-y_2| < t/2$.

    Choose some $t_0 < t$ -- in what follows it will eventually be useful to be able to choose $t_0$
    much smaller than $t$.
    Suppose then $|x_1| \ge Ct_0^{-1}$. We consider the cases $|x_1-y_1| < t_0^{-1}$
    and $|x_1-y_1| \ge t_0^{-1}$ separately. Suppose first $|x_1-y_1| < t_0^{-1}$.
    In this case
    \begin{align*} 
        \dist(R_{k_1,k_2}(x_1,x_2), \{0\}) &\ge \dist(Q_{k_1}(x_1), \{0\}) \\
                                           &\ge \dist\Big( Q\Big(\frac12(x_1+y_1), \frac12|x_1-y_1| \Big), \{0\}\Big),
    \end{align*}
    where we notice that
    $$
    |(x_1+y_1)/2| \ge |x_1| - |x_1-y_1|/2 \ge \frac{C}{2}t_0^{-1}
    $$
    and so, if $C$ is chosen suitably, 
    $$
    \dist(R_{k_1,k_2}(x_1,x_2), \{0\}) > t_0^{-1} > t^{-1}
    $$
    for all $k_1$. So in this case again
    $$
        I \le \varepsilon \sum_{k_1, k_2 =0}^{\infty} (2^{-k_1} |x_1-y_1|)^{\beta_1} (2^{-k_2}|x_2-y_2|)^{\beta_2}
            \lesssim \varepsilon |x_1-y_1|^{\beta_1} |x_2-y_2|^{\beta_2}.
    $$
    We then consider the case $|x_1-y_1| \ge t_0^{-1}$ and split
    \begin{align*}
        I &= I_1 + I_2, \\
        I _1&:= \sum_{k_1:\, \ell(Q_{k_1}(x_1)) > t^{-1}}  \sum_{k_2 =0}^{\infty} (2^{-k_1} |x_1-y_1|)^{\beta_1}
                                   (2^{-k_2}|x_2-y_2|)^{\beta_2} \calO^{\beta_1, \beta_2}_{v_1, v_2}(b, R_{k_1,k_2}(x_1,x_2)), \\
        I_2 &:= \sum_{k_1:\, \ell(Q_{k_1}(x_1)) \le t^{-1}}  \sum_{k_2 =0}^{\infty} (2^{-k_1} |x_1-y_1|)^{\beta_1}
                                   (2^{-k_2}|x_2-y_2|)^{\beta_2} \calO^{\beta_1, \beta_2}_{v_1, v_2}(b, R_{k_1,k_2}(x_1,x_2)).
    \end{align*}
    It is obvious that similarly as above $I_1 \lesssim \varepsilon |x_1-y_1|^{\beta_1} |x_2-y_2|^{\beta_2}$.
    In $I_2$ we simply estimate $\calO^{\beta_1, \beta_2}_{v_1, v_2}(b, R_{k_1,k_2}(x_1,x_2))
    \lesssim \|b\|_{\BMO^{\beta_1, \beta_2}} \lesssim_b 1$. Then notice that
    $$
        2 \cdot 2^{-k_1}t_0^{-1} \le 2\cdot2^{-k_1}|x_1-y_1| = \ell(Q_{k_1}(x_1)) \le t^{-1}
    $$
    implies $2^{-k_1} < t^{-1}t_0.$
    It follows that 
    \begin{align*}
        I_2 &\lesssim_b |x_1-y_1|^{\beta_1} |x_2-y_2|^{\beta_2} \sum_{k_1:\,2^{-k_1} < t^{-1}t_0} 2^{-k_1\beta_1} \\
            &\lesssim (t^{-1}t_0)^{\beta_1} |x_1-y_1|^{\beta_1} |x_2-y_2|^{\beta_2}
            \le \varepsilon  |x_1-y_1|^{\beta_1} |x_2-y_2|^{\beta_2} 
    \end{align*}
    provided that $t_0 =  t_0(t,\varepsilon)$ is small enough. Thus $b$ satisfies
    the desired pointwise condition, since the same proof works also if $|y_1|$ is big.
    The cases when $|x_2|$ or $|y_2|$ is big is symmetric.

    We then assume $b \in \dot{\op{VC}}^{0, \beta_1, \beta_2}$.
    Let now $R=I_1\times I_2$. We estimate
    \begin{equation}\label{eq:Bsplit}
        \begin{split}
            \osc_{v_1,v_2}(b, R)
        &\le \Big(\fint_{I_1} \Big( \fint_{I_2} \Big( \fint_R |B(x,y)| \ud y \Big)^{v_2} 
        \ud x_2 \Big)^{\frac{v_1}{v_2}} \ud x_1 \Big)^{\frac{1}{v_1}}, \\
            B(x,y) &:= b(x_1, x_2)-b(x_1, y_2)-b(y_1,x_2)+b(y_1, y_2).
        \end{split}
    \end{equation}
    Fix $\varepsilon > 0$ and choose $t>0$ so that 
    $$
        |B(x,y)| \le  \varepsilon |x_1-y_1|^{\beta_1} |x_2-y_2|^{\beta_2}
    $$
    whenever $|x_i-y_i| < t$ or $|x_i| > t^{-1}$
    or $|y_i| > t^{-1}$ for $i = 1$ or $i=2$.

    If $\ell(I_i) < t$, then for $x,y \in R$ we have $|x_i-y_i| < t$, and so
    $$
        \osc_{v_1,v_2}(b, R)
        \le \varepsilon \ell(I_1)^{\beta_1} \ell(I_2)^{\beta_2}.
    $$
    The same estimate also clearly holds if $\dist(R, \{0\}) > Ct^{-1}$. Let then
    $t_0 < t$ and assume $\ell(I_1) > t_0^{-1}$. We split the 
    $I_1$ integral on the right hand side of \eqref{eq:Bsplit} to
    $I_1 \cap \{|x_1| > t^{-1}\}$ and $I_1 \cap \{|x_1| \le t^{-1} \}$
    giving us two terms, call them $A$ and $B$, respectively.
    Obviously, we again have $A \le \varepsilon \ell(I)^{\beta_1} \ell(I_2)^{\beta_2}$.
    We then estimate
    $$
        B \le \|b\|_{\dot C^{0, \beta_1, \beta_2}} \ell(I_1)^{\beta_1} \ell(I_2)^{\beta_2}
        \Big(\frac{|B(0, t^{-1})|}{|I_1|} \Big)^{\frac{1}{v_1}}.
    $$
    But here
    $$
    \frac{|B(0, t^{-1})|}{|I_1|} \lesssim t_0^{d_1}t^{-d_1}
    $$
    so that $B \le \varepsilon \ell(I_1)^{\beta_1} \ell(I_2)^{\beta_2}$
    if $t_0 = t_0(t)$ is chosen small enough. This gives us that
    $\calO^{\beta_1, \beta_2}_{v_1, v_2}(b, R)$ is small in all of the desired cases.
\end{proof}
If $\beta_i > 0,$ it again makes sense to set
$$
    \VMO^{\beta_1, \beta_2} :=  \VMO^{\beta_1, \beta_2}_{1,1}, 
$$
since, as we saw, the choice of the exponents does not matter.

There is an alternative way to think about $\VC$ in terms of certain uniform inclusions
to the corresponding one-parameter spaces. In this case, it is essentially just a restatement of the definition
but this perspective will be useful later in bi-parameter approximation results. Motivated by this we define.
\begin{defn}
    We write
    $G \subset_u \VC^{0,\alpha}(\R^n)$ (with $``u"$ for uniformly) provided that 
	for all $\varepsilon>0,$ there exists $t>0$ such that if $|x-y|<t$ or $|x|>t^{-1}$ or $|y|>t^{-1},$ then 
	\[
	\sup_{g\in G} \frac{|g(x)-g(y)|}{|x-y|^{\alpha}}< \varepsilon.
	\]
\end{defn}
Now, given $\vec{\beta} = (\beta_1,\beta_2)$ and a function $\psi$ defined on $\R^d = \R^{d_1}\times \R^{d_2},$ we set
\[
\psi_{x_1,y_1}^{\beta_1}(z_2):= \frac{\psi(x_1,z_2)-\psi(y_1,z_2)}{|x_1-y_1|^{\beta_1}},\qquad 
\psi_{x_2,y_2}^{\beta_2}(z_1) := \frac{\psi(z_1,x_2)-\psi(z_1,y_2)}{|x_2-y_2|^{\beta_2}}.
\]
We can rewrite Definition \ref{def:VCbipar} as the following lemma.
\begin{lem}\label{lem:biVCuniform} There holds that $f\in \VC^{0,\vec{\beta}}(\R^{d_1}\times\R^{d_2})$
	iff for both $i=1,2$ there holds that 
	\begin{align}\label{eq:biVCequiv}
		\left\{  f_{x_i,y_i}^{\beta_i}  \right\} _{\substack{x_i,y_i \in\R^{d_i} 
				\\ x_i\not= y_i}} \subset_u \VC^{0,\beta_j}(\R^{d_j}), \qquad j \ne i.
	\end{align}
\end{lem}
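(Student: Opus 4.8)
The plan is to unwind both sides of the claimed equivalence against the \emph{pointwise} definition of $\dot{\op{VC}}^{0,\vec\beta}$ in Definition \ref{def:VCbipar}, noting that the quantity appearing there is symmetric in the roles of the two parameters. Write
\[
    \Delta f(x_1,y_1,x_2,y_2) := f(x_1,x_2)-f(x_1,y_2)-f(y_1,x_2)+f(y_1,y_2),
\]
and observe the two factorizations
\[
    \frac{\Delta f}{|x_1-y_1|^{\beta_1}|x_2-y_2|^{\beta_2}}
    = \frac{f^{\beta_1}_{x_1,y_1}(x_2) - f^{\beta_1}_{x_1,y_1}(y_2)}{|x_2-y_2|^{\beta_2}}
    = \frac{f^{\beta_2}_{x_2,y_2}(x_1) - f^{\beta_2}_{x_2,y_2}(y_1)}{|x_1-y_1|^{\beta_1}},
\]
so that $\Delta f / (|x_1-y_1|^{\beta_1}|x_2-y_2|^{\beta_2})$ is simultaneously the $\dot C^{0,\beta_2}$-difference quotient of $f^{\beta_1}_{x_1,y_1}$ at $(x_2,y_2)$ and the $\dot C^{0,\beta_1}$-difference quotient of $f^{\beta_2}_{x_2,y_2}$ at $(y_1,x_1)$. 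This is the only algebraic identity needed; everything else is bookkeeping of the smallness regions.

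For the forward direction, assume $f \in \dot{\op{VC}}^{0,\vec\beta}$ and fix $i$, say $i=1$, so $j=2$; I must show $\{ f^{\beta_1}_{x_1,y_1} \}_{x_1 \ne y_1} \subset_u \VC^{0,\beta_2}(\R^{d_2})$. Given $\varepsilon>0$, take the $t>0$ furnished by Definition \ref{def:VCbipar}. If $|x_2-y_2|<t$, or $|x_2|>t^{-1}$, or $|y_2|>t^{-1}$, then for \emph{every} pair $x_1 \ne y_1$ the pointwise bound gives $|\Delta f| \le \varepsilon |x_1-y_1|^{\beta_1}|x_2-y_2|^{\beta_2}$, i.e.
\[
    \sup_{x_1 \ne y_1} \frac{|f^{\beta_1}_{x_1,y_1}(x_2) - f^{\beta_1}_{x_1,y_1}(y_2)|}{|x_2-y_2|^{\beta_2}} \le \varepsilon,
\]
which is exactly the $\subset_u \VC^{0,\beta_2}$ condition with the same threshold $t$. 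The case $i=2$ is identical by the symmetry of $\Delta f$.

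For the converse, assume \eqref{eq:biVCequiv} holds for \emph{both} $i=1,2$ (here it matters that we have both inclusions, even though a single one already controls smallness when the $x_2,y_2$ variables are in the good regime). Fix $\varepsilon>0$. From the $i=1$ inclusion pick $t_1>0$ such that $|x_2-y_2|<t_1$, $|x_2|>t_1^{-1}$, or $|y_2|>t_1^{-1}$ forces $\sup_{x_1\ne y_1}|\Delta f|/(|x_1-y_1|^{\beta_1}|x_2-y_2|^{\beta_2}) < \varepsilon$; from the $i=2$ inclusion pick $t_2>0$ doing the same job in the first pair of variables. Set $t := \min(t_1,t_2)$. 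Then whenever $|x_i-y_i|<t$, $|x_i|>t^{-1}$, or $|y_i|>t^{-1}$ for $i=1$ \emph{or} $i=2$, one of the two inclusions applies and yields $|\Delta f| \le \varepsilon |x_1-y_1|^{\beta_1}|x_2-y_2|^{\beta_2}$; this is precisely the defining condition for $f \in \dot{\op{VC}}^{0,\vec\beta}$. (That $f \in \dot C^{0,\vec\beta}$ to begin with follows because each uniform family lies in $\dot C^{0,\beta_j}$ with a common bound, again via the same factorization.) There is essentially no obstacle here — the lemma is, as the text says, a restatement — the only mild care needed is to keep track of which variables' smallness is being exploited, and to note that one really does want both inclusions so that smallness coming from \emph{either} coordinate pair is covered.
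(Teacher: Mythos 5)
Your proof is correct and is essentially the same argument the paper has in mind: the paper offers no written proof, simply declaring the lemma a restatement of Definition \ref{def:VCbipar}, and your unwinding via the two factorizations of $\Delta f/(|x_1-y_1|^{\beta_1}|x_2-y_2|^{\beta_2})$ is exactly that restatement made explicit. The bookkeeping (one uniform inclusion per coordinate pair, $t=\min(t_1,t_2)$ in the converse, and the chaining remark giving the a priori $\dot C^{0,\vec\beta}$ bound in analogy with Remark \ref{rem:VMOu->BMOu}) is all in order.
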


\subsubsection*{Partly fractional case}
We now move to the partly fractional case, where $\beta_1 > 0$ but $\beta_2 = 0$ (or the other
way around).
\begin{thm}\label{thm:CBMO}
    Let $1 \le v_1, v_2 < \infty$ and $\beta_1 > 0$. Then we have
    \begin{align*}
         \Norm{b}{\dot C^{0,\beta_1}_{x_1}(\BMO_{x_2})}
         = \sup_{x_1\neq y_1} \left\| \frac{b(x_1, \cdot)-b(y_1, \cdot)}{|x_1-y_1|^{\beta_1}}\right\|_{\BMO(\R^{d_2})}
         \sim \|b\|_{\BMO^{\beta_1, 0}_{v_1, v_2}}.
    \end{align*}
\end{thm}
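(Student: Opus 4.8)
The plan is to reproduce the scheme of the purely fractional equivalence $\|b\|_{\dot C^{0,\beta_1,\beta_2}}\sim\|b\|_{\BMO^{\beta_1,\beta_2}_{v_1,v_2}}$ proved above, with the now non-fractional second parameter ($\beta_2=0$) handled by the John--Nirenberg inequality on $\R^{d_2}$ in place of a second telescoping. For the bound $\|b\|_{\BMO^{\beta_1,0}_{v_1,v_2}}\lesssim\|b\|_{\dot C^{0,\beta_1}_{x_1}(\BMO_{x_2})}$, fix a rectangle $R=I_1\times I_2$ and first record the elementary identity
\[
    b(x_1,x_2)-\ave{b}_{I_1}(x_2)-\ave{b}_{I_2}(x_1)+\ave{b}_{R}
    =\fint_{I_1}\Big[b(x_1,x_2)-b(y_1,x_2)-\ave{b(x_1,\cdot)-b(y_1,\cdot)}_{I_2}\Big]\ud y_1,
\]
valid for $x_1\in I_1$, $x_2\in I_2$. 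Taking the $L^{v_2}_{x_2}(I_2)$-norm, pulling $\fint_{I_1}$ outside by Minkowski's inequality, and applying John--Nirenberg in the form $\|h-\ave{h}_{I_2}\|_{L^{v_2}(I_2)}\lesssim_{v_2}|I_2|^{1/v_2}\|h\|_{\BMO(\R^{d_2})}$ to $h=b(x_1,\cdot)-b(y_1,\cdot)$ --- whose $\BMO(\R^{d_2})$-norm is $\le|x_1-y_1|^{\beta_1}\|b\|_{\dot C^{0,\beta_1}_{x_1}(\BMO_{x_2})}\lesssim\ell(I_1)^{\beta_1}\|b\|_{\dot C^{0,\beta_1}_{x_1}(\BMO_{x_2})}$ for $x_1,y_1\in I_1$ --- bounds the $L^{v_2}_{x_2}(I_2)$-norm by $|I_2|^{1/v_2}\ell(I_1)^{\beta_1}\|b\|_{\dot C^{0,\beta_1}_{x_1}(\BMO_{x_2})}$ uniformly in $x_1\in I_1$. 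Taking the $L^{v_1}_{x_1}(I_1)$-norm and dividing by $|I_1|^{1/v_1}|I_2|^{1/v_2}\ell(I_1)^{\beta_1}$ gives $\calO^{\beta_1,0}_{v_1,v_2}(b,R)\lesssim\|b\|_{\dot C^{0,\beta_1}_{x_1}(\BMO_{x_2})}$, and we take the supremum over $R$.

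For the reverse bound $\|b\|_{\dot C^{0,\beta_1}_{x_1}(\BMO_{x_2})}\lesssim\|b\|_{\BMO^{\beta_1,0}_{v_1,v_2}}$, fix $x_1\ne y_1$ and a cube $J\subset\R^{d_2}$; setting $g:=b(x_1,\cdot)-b(y_1,\cdot)$ one must bound $\fint_J|g-\ave{g}_J|$. I would choose a cube $Q^*\subset\R^{d_1}$ with $x_1,y_1\in Q^*$ and $\ell(Q^*)\sim|x_1-y_1|$, together with chains $Q^*=Q^{x_1}_0\supset Q^{x_1}_1\supset\cdots$ and $Q^*=Q^{y_1}_0\supset Q^{y_1}_1\supset\cdots$, $\ell(Q^{w_1}_k)=2^{-k}\ell(Q^*)$, shrinking to $x_1$ and $y_1$ respectively --- the one-parameter version of the construction \eqref{eq:expand}. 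Telescoping $b(x_1,\cdot)$ and $b(y_1,\cdot)$ along their chains and subtracting, the common base terms $\ave{b(\cdot,z_2)}_{Q^*}$ cancel, giving
\[
    g-\ave{g}_J=\sum_{k\ge0}\big[h^{x_1}_k-\ave{h^{x_1}_k}_J\big]-\sum_{k\ge0}\big[h^{y_1}_k-\ave{h^{y_1}_k}_J\big],
    \qquad h^{w_1}_k(z_2):=\ave{b(\cdot,z_2)}_{Q^{w_1}_{k+1}}-\ave{b(\cdot,z_2)}_{Q^{w_1}_k}.
\]
The crux is the identity, proved by the same computation as in the purely fractional case,
\[
    h^{x_1}_k(z_2)-\ave{h^{x_1}_k}_J=\fint_{Q^{x_1}_{k+1}}\Big(b-\ave{b}_{Q^{x_1}_k}-\ave{b}_J+\ave{b}_{R_k}\Big)(w_1,z_2)\ud w_1,\qquad R_k:=Q^{x_1}_k\times J,
\]
which says that the $J$-mean-zero part of $h^{x_1}_k$ is an average over $Q^{x_1}_{k+1}$ of the oscillation integrand of $b$ over $R_k$.

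Consequently, using $Q^{x_1}_{k+1}\subset Q^{x_1}_k$ (so $|R_k|=2^{d_1}|Q^{x_1}_{k+1}\times J|$) and the Jensen inequality $\osc_{1,1}\le\osc_{v_1,v_2}$,
\[
    \fint_J\big|h^{x_1}_k-\ave{h^{x_1}_k}_J\big|\le\fint_{Q^{x_1}_{k+1}\times J}\big|b-\ave{b}_{Q^{x_1}_k}-\ave{b}_J+\ave{b}_{R_k}\big|\lesssim\osc_{v_1,v_2}(b,R_k)\le\ell(Q^{x_1}_k)^{\beta_1}\|b\|_{\BMO^{\beta_1,0}_{v_1,v_2}}.
\]
Summing the geometric series $\sum_k\ell(Q^{x_1}_k)^{\beta_1}\sim_{\beta_1}|x_1-y_1|^{\beta_1}$ --- here $\beta_1>0$ is essential --- and likewise along the $y_1$-chain, yields $\fint_J|g-\ave{g}_J|\lesssim|x_1-y_1|^{\beta_1}\|b\|_{\BMO^{\beta_1,0}_{v_1,v_2}}$. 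Taking suprema over $J$ and over $x_1\ne y_1$ finishes the argument.

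The two algebraic identities together with the Minkowski and Jensen steps are routine; the point that needs care --- and the main obstacle --- is the telescoping in the $x_1$-variable. The convergence $\ave{b(\cdot,z_2)}_{Q^{x_1}_k}\to b(x_1,z_2)$ requires $x_1$ to be a Lebesgue point of $w_1\mapsto b(w_1,z_2)$ for a.e. $z_2$, which (by Fubini, from the one-parameter Lebesgue differentiation theorem applied slicewise) holds for a.e. $x_1$. Since both quantities in the statement are unaffected by modifying $b$ on a Lebesgue-null set --- and functions in the involved H\"older class are continuous in $x_1$, so an a.e. estimate propagates to all points --- this is not a genuine difficulty, exactly as in the use of \eqref{eq:H1} above, but it should be spelled out. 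A secondary technical point is the explicit construction of the common parent cube $Q^*$ and the two chains with prescribed side lengths, which is the one-parameter specialization of \eqref{eq:expand}.
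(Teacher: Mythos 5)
Your proposal is correct and follows essentially the same route as the paper: the upper bound for $\calO^{\beta_1,0}_{v_1,v_2}(b,R)$ via the same algebraic identity plus Minkowski and John--Nirenberg, and the converse via the one-parameter telescoping along shrinking cubes (the construction \eqref{eq:expand}, spelled out in the proof of Lemma \ref{lem:VMOa0uniform}), bounding each increment by $\osc_{1,1}\le\osc_{v_1,v_2}$ over $Q_k\times J$ and summing the geometric series using $\beta_1>0$. Your explicit common parent cube $Q^*$ and the remark on Lebesgue points are just careful write-ups of the steps the paper compresses into ``the now very familiar techniques.''
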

\begin{proof}
    Let $b_{x_1, y_1} := b(x_1,\cdot)-b(y_1,\cdot)$.
    For $R = I_1 \times I_2$ and $x_i \in I_i$ we have
    \begin{align*}
        \abs{b(x_1, x_2)-&\ave{b}_{I_1}(x_2)-\ave{b}_{I_2}(x_1)+\ave{b}_{I_1\times I_2}} \\ 
                         &\lesssim \ell(I_1)^{\beta_1}\fint_{I_1}  \Bigg|\frac{b_{x_1, y_1}(x_2)}{|x_1-y_1|^{\beta_1}}
                         -\Big \langle \frac{b_{x_1, y_1}}{|x_1-y_1|^{\beta_1}}\Big \rangle_{I_2}\Bigg|\ud y_1.
    \end{align*}
    This gives
    \begin{equation*}
        \begin{split}
            &\osc_{v_1,v_2}(b, R) \\
        &\lesssim \ell(I_1)^{\beta_1} \Big(\fint_{I_1} \Big( \fint_{I_2} \Big(
                    \fint_{I_1} \Big|\frac{b_{x_1, y_1}(x_2)}{|x_1-y_1|^{\beta_1}}
                         -\Big \langle \frac{b_{x_1, y_1}}{|x_1-y_1|^{\beta_1}}\Big \rangle_{I_2}\Big|\ud y_1\Big)^{v_2} 
        \ud x_2 \Big)^{\frac{v_1}{v_2}} \ud x_1 \Big)^{\frac{1}{v_1}} \\
        &\le   \ell(I_1)^{\beta_1} \Big(\fint_{I_1} \Big( \fint_{I_1} \Big(
                    \fint_{I_2} \Big|\frac{b_{x_1, y_1}(x_2)}{|x_1-y_1|^{\beta_1}}
                -\Big \langle \frac{b_{x_1, y_1}}{|x_1-y_1|^{\beta_1}}\Big \rangle_{I_2}\Big|^{v_2}\ud x_2\Big)^{\frac{1}{v_2}} 
    \ud y_1\Big)^{v_1} \ud x_1 \Big)^{\frac{1}{v_1}} \\
        &\lesssim \ell(I_1)^{\beta_1} \Norm{b}{\dot C^{0,\beta_1}_{x_1}(\BMO_{x_2})},
        \end{split}
    \end{equation*}
    where in the penultimate step we used Minkowski's integral inequality and in the last
    step the John--Nirenberg inequality. It follows that
    $ \calO^{\beta_1, 0}_{v_1, v_2}(b, R) \lesssim \Norm{b}{\dot C^{0,\beta_1}_{x_1}(\BMO_{x_2})}$
    proving that $\|b\|_{\BMO^{\beta_1, 0}_{v_1, v_2}} \lesssim \Norm{b}{\dot C^{0,\beta_1}_{x_1}(\BMO_{x_2})}.$

    For the other direction fix $x_1, y_1\in \R^{d_1}$ with $x_1\neq y_1$.
    By the now very familiar techniques we see that that for all cubes $I_2 \subset \R^{d_2}$ there holds that 
    \[
        \fint_{I_2} |b_{x_1, y_1}-\langle b_{x_1, y_1} \rangle_{I_2}| \lesssim
        \|b\|_{\BMO^{\beta_1, 0}_{1, 1}} |x_1-y_1|^{\beta_1} 
        \le \|b\|_{\BMO^{\beta_1, 0}_{v_1, v_2}} |x_1-y_1|^{\beta_1}. 
    \]
    The estimate $\Norm{b}{\dot C^{0,\beta_1}_{x_1}(\BMO_{x_2})} \lesssim \|b\|_{\BMO^{\beta_1, 0}_{v_1, v_2}}$
    follows.
\end{proof}
Again, there is no dependence of $v_1, v_2$ and we can just use
$\BMO^{\beta_1, 0}$. We then move on to the corresponding
$\VMO$ space $\VMO^{\beta_1, 0} := \VMO^{\beta_1, 0}_{1,1}$.
We ask what is the right way to characterize $\VMO^{\beta_1, 0}$ in some $\VC$ type language -- similarly
as we did above for the space $\VMO^{\beta_1, \beta_2}$ with both $\beta_1, \beta_2 > 0$.
One way that will be useful to us later, in connection with approximation,
is to think about this in terms of uniform
inclusions like in Lemma \ref{lem:biVCuniform}. So rather than trying to define some space explicitly,
we go directly for this kind of uniform one-parameter inclusions.
\begin{defn}
    We write $
	G\subset_u \VMO(\R^n)$ provided that 
	for all $\varepsilon>0,$ there exists $t>0$ such that if $\ell(J)<t$ or $\ell(J)>t^{-1}$ or $\dist(J,0)>t^{-1}$ then 
	\[
	\sup_{g\in G}\fint_J\left| g - \ave{g}_J  \right|< \varepsilon.
	\]
\end{defn}
\begin{rem}\label{rem:VMOu->BMOu} It is easy to show that
	$$
	G\subset_u \VMO(\R^n)\Longrightarrow \sup_{g\in G}\| g\|_{\BMO(\R^n)}\lesssim 1,
	$$
	and hence this does not need to be included in the definition.
\end{rem}

\begin{lem}\label{lem:VMOa0uniform} Let $\beta_1>0$ and $\beta_2\in\R.$
    Then $f\in \VMO^{(\beta_1,\beta_2)}(\R^{d})$ if and only if 
	\begin{align}\label{eq:lem:VMOa0uniform}
		f\in\VC^{0,\beta_1}(\R^{d_1},\BMO^{\beta_2}(\R^{d_2})),
        \qquad \left\{ f_{x_1,y_1}^{\beta_1} \right\}_{\substack{x_1,y_1 \in\R^{d_1} \\ x_1\not= y_1}}
        \subset_u\VMO^{\beta_2}(\R^{d_2}).
	\end{align}
\end{lem}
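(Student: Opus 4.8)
The plan is to prove the equivalence by relating the rectangular oscillation of $f$ over a rectangle $R=I_1\times I_2$ to the one-parameter oscillations over $I_2$ of the fractional difference slices $f^{\beta_1}_{x_1,y_1}$, uniformly in the pair $x_1\neq y_1$, via two estimates that refine the computations in the proof of Theorem~\ref{thm:CBMO}. First, from the identity
\[
    f(x_1,x_2)-\ave{f}_{I_1}(x_2)-\ave{f}_{I_2}(x_1)+\ave{f}_{R}
    =\fint_{I_1}|x_1-y_1|^{\beta_1}\big(f^{\beta_1}_{x_1,y_1}(x_2)-\langle f^{\beta_1}_{x_1,y_1}\rangle_{I_2}\big)\,\ud y_1,
\]
the bound $|x_1-y_1|\lesssim\ell(I_1)$ on $R$, and Fubini, one gets
\[
    \calO^{\beta_1,\beta_2}_{1,1}(f,R)\lesssim\fint_{I_1}\fint_{I_1}\ell(I_2)^{-\beta_2}\Big(\fint_{I_2}\big|f^{\beta_1}_{x_1,y_1}-\langle f^{\beta_1}_{x_1,y_1}\rangle_{I_2}\big|\Big)\,\ud y_1\,\ud x_1 .
\]
Second, fixing $x_1\neq y_1$ and a cube $J\subset\R^{d_2}$, taking $I_1$ to be the cube centred at $(x_1+y_1)/2$ of side length $|x_1-y_1|$, and using the shrinking cubes $Q_k(x_1),Q_k(y_1)$ of \eqref{eq:expand}, one writes $|x_1-y_1|^{\beta_1}(f^{\beta_1}_{x_1,y_1}-\langle f^{\beta_1}_{x_1,y_1}\rangle_J)=[f(x_1,\cdot)-\langle f(x_1,\cdot)\rangle_J]-[f(y_1,\cdot)-\langle f(y_1,\cdot)\rangle_J]$, telescopes each bracket by the familiar dyadic argument (as in the proof of Theorem~\ref{thm:CBMO}) so that the $J$-mean of the $k$-th term is $\lesssim\osc(f,Q_k(x_1)\times J)$, resp.\ $\lesssim\osc(f,Q_k(y_1)\times J)$, and sums the geometric series (here $\beta_1>0$ is used) to get
\[
    \ell(J)^{-\beta_2}\fint_J\big|f^{\beta_1}_{x_1,y_1}-\langle f^{\beta_1}_{x_1,y_1}\rangle_J\big|
    \lesssim\sum_{k\geq0}2^{-k\beta_1}\big[\calO^{\beta_1,\beta_2}(f,Q_k(x_1)\times J)+\calO^{\beta_1,\beta_2}(f,Q_k(y_1)\times J)\big];
\]
taking $\sup_J$, and using $\sup_J\calO^{\beta_1,\beta_2}(f,Q\times J)\le\|f\|_{\BMO^{\beta_1,\beta_2}}$ together with John--Nirenberg for one-parameter $\BMO^{\beta_2}$, this becomes an estimate for $\|f^{\beta_1}_{x_1,y_1}\|_{\BMO^{\beta_2}}$.

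For ``(right-hand side) $\Rightarrow f\in\VMO^{\beta_1,\beta_2}$'', first $f\in\BMO^{\beta_1,\beta_2}$ by Theorem~\ref{thm:CBMO}; then I apply the first estimate and split by which geometric feature of $R=I_1\times I_2$ produces smallness (noting that $\dist(R,0)$ large forces $\dist(I_1,0)$ or $\dist(I_2,0)$ large). If $I_2$ is small, large, or far from $0$, the inclusion $\{f^{\beta_1}_{x_1,y_1}\}\subset_u\VMO^{\beta_2}$ makes the inner $I_2$-average small uniformly in $x_1,y_1$. If $I_1$ is small then $|x_1-y_1|$ is small, and $f\in\VC^{0,\beta_1}(\BMO^{\beta_2})$ makes $\|f^{\beta_1}_{x_1,y_1}\|_{\BMO^{\beta_2}}$ — hence the $I_2$-average — small; likewise if $\dist(I_1,0)$ is large, as then $|x_1|,|y_1|$ are large. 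The one delicate case, $\ell(I_1)$ large, uses the $t_0\ll t$ device from the proof of Lemma~\ref{lem:VMO_VC_pure_frac}: split the $x_1$- and $y_1$-integrations over $I_1$ into $\{|x_1|>t^{-1}\}$ (resp.\ $\{|y_1|>t^{-1}\}$), where $\VC$ gives smallness, and its complement in $I_1$, of relative measure $\lesssim(t\ell(I_1))^{-d_1}$, on which we only use $\fint_{I_2}|f^{\beta_1}_{x_1,y_1}-\langle f^{\beta_1}_{x_1,y_1}\rangle_{I_2}|\le\|f^{\beta_1}_{x_1,y_1}\|_{\BMO^{\beta_2}}\le\|f\|_{\dot C^{0,\beta_1}(\BMO^{\beta_2})}$; taking the lower threshold for $\ell(I_1)$ much larger than $t^{-1}$ kills the latter contribution.

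For the converse I use the second estimate. That $\{f^{\beta_1}_{x_1,y_1}\}\subset_u\VMO^{\beta_2}$ is the easy half: if $J$ is small, large, or far from $0$, each $Q_k(x_1)\times J$ and $Q_k(y_1)\times J$ inherits this through its $\R^{d_2}$-side, so each $\calO^{\beta_1,\beta_2}(f,Q_k(\cdot)\times J)$ is small and the geometric sum is small uniformly in $x_1,y_1$. That $f\in\VC^{0,\beta_1}(\BMO^{\beta_2})$ is the substantial half: smallness of $\|f^{\beta_1}_{x_1,y_1}\|_{\BMO^{\beta_2}}$ for $|x_1-y_1|$ small is immediate, since then all $Q_k(x_1),Q_k(y_1)$ are small. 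For $|x_1|$ large: if $|x_1-y_1|\le t_0^{-1}$ as well, all telescoping cubes lie in $B(x_1,Ct_0^{-1})\cup B(y_1,Ct_0^{-1})$ and — since $|y_1|\ge|x_1|-|x_1-y_1|$ is then also large — are far from $0$; if instead $|x_1-y_1|>t_0^{-1}$, then for the $x_1$-cubes every scale is large or satisfies $\dist(Q_k(x_1),0)\ge|x_1|-\ell(Q_k(x_1))>t^{-1}$, while for the $y_1$-cubes only the $O(\log(1/t))$ ``middle'' scales with $t\le\ell(Q_k(y_1))\le t^{-1}$ escape the small/large/far trichotomy, and there we bound $\calO^{\beta_1,\beta_2}(f,Q_k(y_1)\times J)\le\|f\|_{\BMO^{\beta_1,\beta_2}}$; the geometric tail $\lesssim(t|x_1-y_1|)^{-\beta_1}<(t_0/t)^{\beta_1}$ is small once $t_0\ll t$. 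The case $|y_1|$ large is symmetric.

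The main obstacle is precisely this $t_0\ll t$ bookkeeping — in the $\ell(I_1)$-large case of the forward direction and the $|x_1|$-large case of the converse — where no single scale or rectangle carries all the smallness and one must trade a relative-measure (resp.\ geometric-tail) gain against the a priori bound $\|f\|_{\dot C^{0,\beta_1}(\BMO^{\beta_2})}=\sup_{x_1\neq y_1}\|f^{\beta_1}_{x_1,y_1}\|_{\BMO^{\beta_2}}$ (finite by Theorem~\ref{thm:CBMO}, and, via Remark~\ref{rem:VMOu->BMOu}, already implied by the second right-hand condition). The remaining ingredients are routine: Fubini for the averages, Lebesgue differentiation in $x_1$ for a.e.\ $z_2$ to validate the telescoping, and John--Nirenberg in one parameter to move between $L^1$- and $L^v$-oscillations.
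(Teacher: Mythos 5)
Your proposal is correct and follows essentially the same route as the paper's own proof: the same two key estimates (the rectangular oscillation dominated by averaged slice oscillations of $f^{\beta_1}_{x_1,y_1}$ over $I_1\times I_1$, and the slice oscillation dominated by a geometric series of $\calO^{\beta_1,\beta_2}$ over the telescoping cubes $Q_k(x_1),Q_k(y_1)$), combined with the same case analysis and the $t_0\ll t$ device trading the small relative measure, respectively the geometric tail, against the a priori bound from Remark \ref{rem:VMOu->BMOu}. The only differences are cosmetic (you carry the $\ell(I_2)^{-\beta_2}$ factor throughout instead of reducing to $\beta_2=0$, and you spell out the $y_1$-cube scales where the paper appeals to symmetry).
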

\begin{rem}
    \begin{itemize}
       \item 
           We mostly care about the case $\beta_2 = 0$ of this lemma, and that is the case
           directly related to the current discussion.
           However, the value of $\beta_2$ does not really matter in the proof.
       \item For $\beta_2 > 0$ this is a characterization of the purely fractional
           rectangular $\VMO$. But so is Lemma \ref{lem:VMO_VC_pure_frac}.
           In fact, it appeas that Lemma \ref{lem:VMO_VC_pure_frac}
           could be derived from Lemma \ref{lem:VMOa0uniform} (going through
           Lemma \ref{lem:biVCuniform}) if we would check the one-parameter
           result that $G \subset_u \VMO^{\alpha}$ for $\alpha > 0$ if and only if
           $G \subset_u \VC^{0, \alpha}$. 
        \item Many of the techniques to prove Lemma \ref{lem:VMOa0uniform}
            have already appeared in the proofs above -- we give the full details,
            regardless.
    \end{itemize}
\end{rem}

\begin{proof}[Proof of Lemma \ref{lem:VMOa0uniform}]
	After reading the proof below in the special case $\beta_2 = 0$ it becomes
	clear that the general case $\beta_2 \in \R$ follows simply by carrying
	the extra factor $\ell(I_2)^{-\beta_2}$ around. So we assume $\beta_2 = 0$
    and relabel $\alpha = \beta_1$.
	
	Notice that the conditions on the line \eqref{eq:lem:VMOa0uniform}
	can be stated as the following two bullets:
	for all $\varepsilon>0,$ there exists $t>0$ such that whenever either 
	\begin{itemize}
		\item $|x_1-y_1|<t$ or $|x_1|>t^{-1}$ or $|y_1|>t^{-1}$ and 
		$I_2 \subset\R^{d_2}$ is an arbitrary cube, or
		\item  $\ell(I_2)<t$ or $\ell(I_2)>t^{-1}$ or $\dist(I_2,0)>t^{-1},$
		and $x_1\not=y_1\in\R^{d_1}$ are arbitrary points
	\end{itemize}  
	we have
	\begin{align}\label{eq:happo}
		\fint_{I_2} \left| f_{x_1,y_1}^{\alpha}(x_2) 
		- \ave{f_{x_1,y_1}^{\alpha}}_{J}\right|\ud x_2 \leq \varepsilon.
	\end{align}
	To see that the above two bullets imply $f\in \VMO^{(\alpha,0)}(\R^{d}),$
	fix a rectangle $R = I_1 \times I_2$ and, to control
	$\calO^{\alpha, 0}(f, R)$,
	note that 
	\begin{align*}
		&\ell(I_1)^{-\alpha} \fint_{I_1}  \fint_{I_2}\left|  f-\langle f\rangle_{I_1} 
		-\langle f\rangle_{I_2} +\langle f\rangle_{R} \right| \\ 
		&\lesssim   \fint_{I_2} \ell(I_1)^{-\alpha} \fint_{I_1} \fint_{I_1} \left|  f(x_1,x_2)
		- \langle f(x_1,\cdot) \rangle_{I_2} - \left( f(y_1,x_2)
		- \langle f(y_1,\cdot) \rangle_{I_2} \right)\right| \ud y_1\ud x_1\ud x_2 \\
		&\lesssim_{\alpha}   \fint_{I_2} \fint_{I_1} \fint_{I_1} \left| f_{x_1,y_1}^{\alpha}(x_2)
		- \ave{f_{x_1,y_1}^{\alpha}}_{I_2}\right|\ud y_1\ud x_1\ud x_2 \\
		&=  \fint_{I_1} \fint_{I_1} \left[ \fint_{I_2}  \left| f_{x_1,y_1}^{\alpha}(x_2)
		- \ave{f_{x_1,y_1}^{\alpha}}_{I_2}\right| \ud x_2\right] \ud y_1\ud x_1.
	\end{align*}
	Now, this is directly small by \eqref{eq:happo} in all the cases demanded by the definition
	of $\VMO$ except possibly when all we know that $\ell(I_1)$ is big. We have seen an argument like
	this before though -- so fix $t_0 < t$ and consider $\ell(I_1) > t_0^{-1}$.
	Then fix $x_1 \in I_1$ and notice that it is enought to bound
	\begin{align*}
		& \frac{1}{|I_1|} \int_{I_1 \cap \{|y_1| \le t^{-1}\}} \left[ \fint_{I_2}  \left| f_{x_1,y_1}^{\alpha}(x_2) 
		- \ave{f_{x_1,y_1}^{\alpha}}_{I_2}\right| \ud x_2\right] \ud y_1 \\
		&\le \frac{1}{|I_1|} \int_{I_1 \cap \{|y_1| \le t^{-1}\}} \| 
		f_{x_1,y_1}^{\alpha}\|_{\BMO(\R^{d_2})} \ud y_1 \lesssim \frac{|B(0, t^{-1})|}{|I_1|}
		\lesssim t^{-d_1}t_0^{d_1},
	\end{align*}
	where we used Remark \ref{rem:VMOu->BMOu}. This is small with a small enough $t_0$ so we are done.
	
	For the other direction,
	fix $\varepsilon > 0$ and choose $t > 0$ so that
	$$
	\calO^{\alpha, 0}(b, R) < \varepsilon, \qquad R = I_1 \times I_2,
	$$
	whenever $\ell(I_i) < t$ or $\ell(I_i) > t^{-1}$ for $i=1$ or $i=2$, or
	$\dist(R, \{0\}) > t^{-1}$. Fix $x_1 \ne y_1$ and $I_2$.
	We write
	\begin{align*}
		f_{x_1,y_1}^{\alpha}(x_2) - \ave{f_{x_1,y_1}^{\alpha}}_{I_2}
		= 	\frac{g_{x_2,I_2}(x_1) - g_{x_2,I_2}(y_1) }{|x_1-y_1|^{\alpha}},
	\end{align*}
	where
	\begin{align*}
		g_{x_2,I_2}(z_1) := f(z_1,x_2) - \ave{f(z_1,\cdot)}_{I_2}.
	\end{align*}
	With the same notation as around the line \eqref{eq:expand} we expand 
	\begin{align*}
		&g_{x_2,I_2}(x_1) - g_{x_2,I_2}(y_1) \\ 
		&= \sum_{k=0}^{\infty} [\ave{g_{x_2,I_2}}_{Q_ {k+1}(x_1)} -
		\ave{g_{x_2,I_2}}_{Q_k(x_1)}] -  
		\sum_{k=0}^{\infty} [\ave{g_{x_2,I_2}}_{Q_ {k+1}(y_1)} -  \ave{g_{x_2,I_2}}_{Q_k(y_1)}] \\
		&= I(x_2) - II(x_2).
	\end{align*}
	Then, we have 
	\begin{align*}
		\fint_{I_2}\left|I(x_2)\right|\ud x_2 &\leq \sum_{k=0}^{\infty} 
		\fint_{I_2} \left| \ave{g_{x_2,I_2}}_{Q_ {k+1}(x_1)} -  \ave{g_{x_2,I_2}}_{Q_k(x_1)}\right| \ud x_2 \\
		&\lesssim  \sum_{k=0}^{\infty}   
		\fint_{I_2} \fint_{Q_k(x_1)}\left|  g_{x_2,I_2}(z_1)-  \ave{g_{x_2,I_2}}_{Q_k(x_1)}\right| \ud z_1 \ud x_2\\
		&= \sum_{k=0}^{\infty}    \fint_{Q_k(x_1)} \fint_{I_2} |f-\langle f \rangle_{Q_k(x_1)} 
		- \langle f \rangle_{I_2} + \langle f \rangle_{Q_k(x_1) \times I_2}| \\
		&\lesssim \sum_{k=0}^{\infty} (2^{-k} |x_1-y_1|)^{\alpha} \calO^{\alpha,0}(f, Q_k(x_1)\times I_2).
	\end{align*}
	Provided that $\ell(I_2)<t$ or $\ell(I_2)>t^{-1}$ or $\dist(I_2,0) > t^{-1}$,
	we get
	\begin{align}\label{eq:perkele}
		\sum_{k=0}^{\infty} 2^{-\alpha k} \calO^{\alpha,0}(f, Q_k(x_1)\times I_2)
		\le \varepsilon \sum_{k=0}^{\infty} 2^{-\alpha k} \lesssim \varepsilon.
	\end{align}
	Of course, the symmetric bound is valid for the term $II(x_2)$.
	Thus, we obtain 
	\begin{align*}
		\fint_{I_2}\left| f_{x_1,y_1}^{\alpha}(x_2) - \ave{f_{x_1,y_1}^{\alpha}}_{I_2}\right|\ud x_2 
		&=  \fint_{I_2}\left|  \frac{g_{x_2,I_2}(x_1) - g_{x_2,I_2}(y_1) }{|x_1-y_1|^{\alpha}}  \right|\ud x_2 
		\lesssim \varepsilon
	\end{align*}
	under these assumptions on $I_2$ -- this shows that the second bullet holds.

	Then, let us look at the cases, where we have control on the variables $x_1, y_1$.
	We continue from \eqref{eq:perkele} with the goal to again show that
	$$
	\sum_{k=0}^{\infty} 2^{-\alpha k} \calO^{\alpha,0}(f, Q_k(x_1)\times I_2)
	\lesssim \varepsilon
	$$
	under such conditions on $x_1, y_1$ (independently of $I_2$). First, if $|x_1-y_1| < t/2$,
	then we have $\ell(Q_{k}(x_1)) = 2 \cdot 2^{-k}|x_1-y_1| \le
	2|x_1-y_1| < t$, and the desired bound follows as $\calO^{\alpha, 0}(f, Q_{k}(x_1) \times I_2) \le
	\varepsilon$.
	
	Choose some $t_0 < t$.
	Suppose then $|x_1| \ge Ct_0^{-1}$. We consider the cases $|x_1-y_1| < t_0^{-1}$
	and $|x_1-y_1| \ge t_0^{-1}$ separately. Suppose first $|x_1-y_1| < t_0^{-1}$.
	In this case
	\begin{align*} 
		\dist(Q_{k}(x_1) \times I_2, \{0\}) 
		\ge \dist\Big( Q\Big(\frac12(x_1+y_1), \frac12|x_1-y_1| \Big), \{0\}\Big),
	\end{align*}
	where 
	$$
	|(x_1+y_1)/2| \ge |x_1| - |x_1-y_1|/2 \ge \frac{C}{2}t_0^{-1}.
	$$
	So, if $C$ is chosen suitably, 
	$$
	\dist(Q_k(x_1) \times I_2, \{0\}) > t_0^{-1} > t^{-1}
	$$
	for all $k$. So in this case again $\calO^{\alpha,0}(f, Q_k(x_1)\times I_2) < \varepsilon$
	all the time so we are good.
	
	We then consider the other subcase $|x_1-y_1| \ge t_0^{-1}$ and split
	\begin{align*}
		\sum_{k=0}^{\infty} 2^{-\alpha k} \calO^{\alpha,0}(f, Q_k(x_1)\times I_2) &= A_1 + A_2, \\
		A_1&:= \sum_{k:\, \ell(Q_{k}(x_1)) > t^{-1}}  2^{-\alpha k} \calO^{\alpha,0}(f, Q_k(x_1)\times I_2), \\
		A_2&:= \sum_{k:\, \ell(Q_{k}(x_1)) \le t^{-1}}  2^{-\alpha k} \calO^{\alpha,0}(f, Q_k(x_1)\times I_2). \\
	\end{align*}
	It is obvious that $A_1 \lesssim \varepsilon$.
	For $A_2$ notice that
	$$
	2 \cdot 2^{-k}t_0^{-1} \le 2\cdot2^{-k}|x_1-y_1| = \ell(Q_{k}(x_1)) \le t^{-1}
	$$
	implies $2^{-k} < t^{-1}t_0.$ For $A_2$ we only use the above and 
	$\calO^{\alpha,0}(f, Q_k(x_1)\times I_2) \lesssim 1$.
	It follows that 
	\begin{align*}
		A_2 &\lesssim
		\sum_{k:\, 2^{-k} <  t^{-1}t_0} 2^{-\alpha k} \sim (t^{-1}t_0)^{\alpha} \le \varepsilon
	\end{align*}
	provided that $t_0 =  t_0(\varepsilon)$ is small enough. The case where $|y_1|$ is large
	instead is symmetric. Thus, we are done.
\end{proof}
We will complement our knowledge of the various spaces in the next section
by proving approximation results (density of smooth functions).

\section{Approximation in the relevant bi-parameter spaces}
We will prove the compactness of bi-commutators directly in the purely fractional case
by using Lemma \ref{lem:VMO_VC_pure_frac}. From this perspective, it would not be
strictly necessary to prove Theorem \ref{thm:approx:VCVC}, the corresponding approximation result
in the purely fractional case (we will only need the inclusion of $C^{\infty}_c$ in the purely
fractional $\VMO$, not the density). However,
we think the result is of independent interest and gives an additional characterization of
the purely fractional $\VMO$ space. In addition,
the techniques used in this proof are important in the other approximation
results later. Importantly, in other ranges of exponents the approximability
of the symbols is critical to us, since
to be able to derive the compactness in those ranges from the purely fractional case,
we need to go through approximation. In the diagonal, we have chosen to use the approximability
as the very definition of the product $\VMO$. However, in other cases the definitions are not
via approximation, and so will explicitly need Theorem \ref{thm:approx:VCVMO}.
\subsection{Approximation in $\dot{C}^{0,\alpha}$-valued H\"older spaces} 
\begin{thm}\label{thm:approx:VCVC} Let $\R^d=\R^{d_1}\times\R^{d_2}$ and $\vec{\beta} = (\beta_1,\beta_2)$ be such that $\beta_i\in (0,1).$ Then, there holds that 
	\[
	\VC^{0,\vec{\beta}}(\R^d) = \overline{C^{\infty}_c(\R^d)}^{\dot{\op{C}}^{0,\vec{\beta}}(\R^d)}.
	\]
\end{thm}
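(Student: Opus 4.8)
The plan is to prove the two inclusions separately, the easy one being $\overline{C^\infty_c}^{\dot C^{0,\vec\beta}} \subset \VC^{0,\vec\beta}$. Since $\VC^{0,\vec\beta}$ is (by definition, unwinding Lemma~\ref{lem:VMO_VC_pure_frac} / Lemma~\ref{lem:biVCuniform}) a closed subspace of $\dot C^{0,\vec\beta}$, it suffices to check that every $\psi \in C^\infty_c(\R^d)$ lies in $\VC^{0,\vec\beta}$. This is a direct estimate on the second difference $\psi(x_1,x_2)-\psi(x_1,y_2)-\psi(y_1,x_2)+\psi(y_1,y_2)$: using $\beta_i < 1$, the mean value theorem in each variable gives a bound $\lesssim \|\nabla_{x_1}\nabla_{x_2}\psi\|_\infty |x_1-y_1||x_2-y_2|$ when both differences are small, which dominates $\varepsilon|x_1-y_1|^{\beta_1}|x_2-y_2|^{\beta_2}$ once $|x_i-y_i|<t$; and when some $|x_i|$ or $|y_i|$ is large (outside the support of $\psi$, or more precisely so that the relevant slices vanish), the whole expression is $0$. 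So the support condition and the smoothness together force $\psi \in \VC^{0,\vec\beta}$, hence the closure is contained in $\VC^{0,\vec\beta}$.

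For the reverse inclusion I would argue by explicit construction of approximants. Given $f \in \VC^{0,\vec\beta}$ and $\varepsilon>0$, choose $t>0$ from Definition~\ref{def:VCbipar}. The construction should be a two-step mollify-and-truncate procedure, ideally done one parameter at a time using the uniform-inclusion reformulation (Lemma~\ref{lem:biVCuniform}): $f \in \VC^{0,\vec\beta}$ iff $\{f^{\beta_1}_{x_1,y_1}\} \subset_u \VC^{0,\beta_2}(\R^{d_2})$ and symmetrically. First mollify $f$ at a small scale $\delta \ll t$ in both variables, $f * \varphi_\delta^{(1)} * \varphi_\delta^{(2)}$; since convolution is an average of translates and the $\dot C^{0,\vec\beta}$ seminorm is translation invariant, the second difference of the mollification is an average of second differences of $f$, so the mollified function still satisfies the same $\VC$ estimates (with $t$ replaced by something comparable), is now smooth, and is $\dot C^{0,\vec\beta}$-close to $f$ — here one uses exactly the vanishing condition for small $|x_i-y_i|$ to control $\|f - f*\varphi_\delta\|_{\dot C^{0,\vec\beta}}$. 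Then truncate: multiply by a smooth cutoff $\chi_N(x_1)\chi_N(x_2)$ equal to $1$ on $|x_i| \le N$ and supported in $|x_i| \le 2N$, with $N \gg t^{-1}$. The key point is that cutting off only affects the second difference where some coordinate is large, i.e.\ exactly in the regime where the $\VC$ condition says the second difference of $f$ is already $\lesssim \varepsilon$; one expands the second difference of $\chi_N f$ by a Leibniz-type formula into a main term $\chi_N(x_1)\chi_N(x_2)\Delta^2 f$ plus error terms each containing a difference of $\chi_N$ (hence a factor $\lesssim N^{-1}|x_i - y_i| \lesssim N^{\beta_i - 1}|x_i-y_i|^{\beta_i}$ after interpolating against boundedness, for $|x_i - y_i| \le N$) or a large-coordinate factor, and checks each is $\lesssim \varepsilon$. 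This uses constructions in the spirit of Uchiyama~\cite{Uch1978} and the one-parameter density facts from~\cite{MudOik24}, invoked slice-wise via $\subset_u$.

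The main obstacle I expect is the truncation step, and specifically making the error terms from the cutoff small \emph{uniformly in the second-difference scale} while only knowing the $\VC$ (not a quantitative Hölder modulus) behavior of $f$ at large coordinates. The delicate case is when, say, $\ell$ of the separation $|x_1 - y_1|$ is comparable to or larger than $N$: then the ``difference of $\chi_N$'' gains no smallness, and one must instead play off the large-coordinate vanishing of $f$ against the size of $\chi_N f$ directly — this is the same bookkeeping (splitting according to whether $|x_1 - y_1|$ is below or above a threshold, then further splitting the sum over dyadic scales into those above and below $t^{-1}$ or $N$, and using the a priori $\dot C^{0,\vec\beta}$ bound $\lesssim_f 1$ on the leftover geometric tail) that already appears in the proofs of Lemma~\ref{lem:VMO_VC_pure_frac} and Lemma~\ref{lem:VMOa0uniform}, so I would set up the argument to reuse that telescoping decomposition \eqref{eq:H1} of the second difference into averages over the shrinking rectangles $R_{k_1,k_2}$, apply the cutoff inside that decomposition, and estimate scale-by-scale. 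Handling the bi-parameter product structure — four sign-patterns, two independent scales $k_1,k_2$, and cross terms where one coordinate is large and the other small — is what makes this more involved than the one-parameter Uchiyama argument, but no genuinely new idea beyond careful case analysis should be needed.
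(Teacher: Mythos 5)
Your easy inclusion ($C^{\infty}_c\subset\VC^{0,\vec\beta}$ by direct estimates on the second difference, plus closedness of $\VC^{0,\vec\beta}$ under $\dot C^{0,\vec\beta}$-limits) is correct and is exactly how the paper argues. The mollification step in the hard direction is also fine: translation invariance of separations plus the small-separation part of the $\VC$ condition (and the a priori seminorm bound for large separations) does give $\|f-f*\varphi_\delta\|_{\dot C^{0,\vec\beta}}\to 0$.

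The genuine gap is the truncation step: multiplying by a cutoff $\chi_N(x_1)\chi_N(x_2)$ does not work as you describe. When you expand $\Delta\big(\chi_N\otimes\chi_N\, f\big)$ by Leibniz, the error terms contain \emph{first} differences and pointwise values of $f$, and these are controlled by neither the seminorm nor the $\VC$ condition: $\dot C^{0,\vec\beta}$ only sees second differences, its kernel contains every function of the form $a(x_1)+c(x_2)$ with $a,c$ arbitrary, and even genuinely bi-parameter elements of $\VC^{0,\vec\beta}$ are unbounded in general. Concretely, $f(x_1,x_2)=e^{x_1^2}$ lies in $\VC^{0,\vec\beta}$ with zero seminorm, yet for $x_1,y_1$ in the transition zone of $\chi_N$ and $\chi_N(x_2)=1$, $\chi_N(y_2)=0$ one gets $|\Delta(\chi_N\otimes\chi_N f)(x,y)|\sim e^{N^2}$ against $|x_1-y_1|^{\beta_1}|x_2-y_2|^{\beta_2}\lesssim N^{\beta_1+\beta_2}$, so $\|f-\chi_N\otimes\chi_N f\|_{\dot C^{0,\vec\beta}}\to\infty$. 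Your phrase ``after interpolating against boundedness'' is precisely where this breaks: there is no boundedness to interpolate against. A repair along your lines would require first normalizing $f$ to vanish on the coordinate axes (replace $f$ by $f(x_1,x_2)-f(x_1,0)-f(0,x_2)+f(0,0)$, which has the same second differences), so that first differences become second differences against the base point, and then re-running the case analysis with the resulting $o(|x_1|^{\beta_1}|x_2|^{\beta_2})$ growth; you never mention this normalization, and without it the construction fails outright. The paper avoids the issue entirely: it truncates not by multiplication but by \emph{composition} with the Lipschitz compressions $\tau_M^i$ of \cite{MudOik24}, one variable at a time (and only afterwards mollifies). Composition commutes with the second-difference structure, $\Delta(f\circ\tau)(x,y)=\Delta f(\tau x,\tau y)$ up to the factor $|\tau_M(x_i)-\tau_M(y_i)|^{\beta_i}/|x_i-y_i|^{\beta_i}\lesssim 1$, so every error term is again a second difference of $f$ at points with some large coordinate or small separation --- exactly what $\VC$ controls via the uniform-inclusion Lemma \ref{lem:biVCuniform} and Lemma \ref{lem:MudOik}.
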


To state the main technical lemma of this section, let us recall the following function from Mudarra and Oikari \cite{MudOik24},
\begin{align*}
	\tau_M^i:\R^{d_i} \to B(0,M),\qquad		\tau_M^i(x) = \begin{cases}
		x,\qquad &|x|<M, \\
		\left(\frac{2M-|x|}{M}\right)^{2} x,\qquad M\leq & |x|<2M,\\
		0,\qquad &|x| \geq 2M.
	\end{cases}
\end{align*}
It is easy to see that $\tau_M^i$ is Lipschitz, in particular $\op{Lip}(\tau_M^i)\leq 5$
by the proof of \cite[Theorem 1.8.]{MudOik24}.
We also take a bump $\varphi_1^i \in C^{\infty}_c(\R^{d_i})$
such that $ \int_{\R^{d_i}} \varphi_1^i = 1$ and then form a standard approximation to identity
\begin{align*}
	\varphi_{1/M}^i(z_i) := M^{d_i}\varphi_1^i(Mz_i).
\end{align*}
The following Lemma \ref{lem:MudOik} is \cite[Corollary 2.5.]{MudOik24}, where all the
properties from the one parameter setting have been gathered that will be used in the current bi-parameter extension.
\begin{lem}[\cite{MudOik24}]\label{lem:MudOik} Let $\alpha\in (0,1)$ and $X$ be a Banach space.
	Let $G\subset_u 	\VC^{0,\alpha}(\R^n,X),$ then
	\begin{itemize}
		\item for all $L>0$ we have $G\circ\tau_L\subset_u 	\VC^{0,\alpha}(\R^n,X),$
		\item for all $L,L'>0$ we have $(G\circ\tau_L)*\varphi_{L'}\subset_u 	\VC^{0,\alpha}(\R^n,X),$
		\item for all $M=M(\varepsilon)>0$ sufficiently large 
            \begin{align*}
			\sup_{f\in G} \|  f - f\circ\tau_M  \|_{\dot{ \op{C}}^{0,\alpha}(\R^n,X)}  \leq \varepsilon,
		\end{align*}
		\item  for all $K = K(M,\varepsilon)>0$ sufficiently large
		\begin{align*}
			\sup_{f\in G}\|  f\circ\tau_{M} -( f\circ\tau_M)*\varphi_{1/K}  
			\|_{\dot{ \op{C}}^{0,\alpha}(\R^n,X)} \leq \varepsilon.
		\end{align*}
	\end{itemize}
\end{lem}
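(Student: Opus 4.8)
\emph{Proof plan.} The plan is to establish the four items in order, using the first to bootstrap the second and the fourth. I would begin by recording, by a chaining argument identical to the one behind Remark~\ref{rem:VMOu->BMOu}, that $G\subset_u\VC^{0,\alpha}(\R^n,X)$ forces $A:=\sup_{g\in G}\|g\|_{\dot C^{0,\alpha}(\R^n,X)}<\infty$; this is the quantitative substitute for boundedness of $G$. I also use that each $g\in\VC^{0,\alpha}$ is continuous, so $g\circ\tau_L$ is bounded continuous and its convolutions with the $C^\infty_c$ bumps are genuine smooth $X$-valued functions, and that $\int|\varphi_{L'}|=\int|\varphi_1|=:C_\varphi$ independently of $L'$. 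For the first item: given $\varepsilon$, take the threshold $t$ witnessing $G\subset_u\VC^{0,\alpha}$ at level $\varepsilon 5^{-\alpha}$, and recall $\tau_L$ is $5$-Lipschitz, maps into $B(0,L)$, and vanishes off $B(0,2L)$. If $|x-y|$ is small, then $|\tau_L(x)-\tau_L(y)|\le5|x-y|<t$ and the $\VC$ bound for $G$ applies after rescaling by $5^\alpha$; if one of $|x|,|y|$ exceeds a large $s^{-1}\ge4L$, then either both exceed $2L$ and $\tau_L(x)=\tau_L(y)=0$, or, say, $\tau_L(x)=0$ and $\|g(0)-g(\tau_L(y))\|_X\le A|\tau_L(y)|^\alpha\le AL^\alpha$ while $|x-y|\ge s^{-1}/2$, so the quotient is $\lesssim_A(Ls)^\alpha<\varepsilon$ for $s$ small. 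For the second item, combine this (so $\widetilde G:=G\circ\tau_L\subset_u\VC^{0,\alpha}$ with some threshold $\tilde t$) with $\|(h*\varphi_{L'})(x)-(h*\varphi_{L'})(y)\|_X\le\int\|h(x-z)-h(y-z)\|_X|\varphi_{L'}(z)|\ud z$ and apply the $\VC$ bound for $\widetilde G$ pointwise in $z$: when $|x-y|<\tilde t$ it applies for every $z$, and when $|x|$ exceeds $\tilde t^{-1}$ plus the radius of $\operatorname{supp}\varphi_{L'}$ it applies because then $|x-z|>\tilde t^{-1}$ throughout that support; integrating against $|\varphi_{L'}|$ absorbs $C_\varphi$ and gives the claim.

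The third item is the heart of the proof. Fix $\varepsilon$, set $\varepsilon':=\varepsilon/C(\alpha)$, take the threshold $t$ witnessing $G\subset_u\VC^{0,\alpha}$ at level $\varepsilon'$, and let $M$ be large. Since $F:=f-f\circ\tau_M$ vanishes on $B(0,M)$, it suffices to bound $\|F(x)-F(y)\|_X/|x-y|^\alpha$ when $|x|\ge|y|$ and $|x|\ge M$. Using $F(x)-F(y)=[f(x)-f(y)]-[f(\tau_M(x))-f(\tau_M(y))]$ and $|\tau_M(x)-\tau_M(y)|\le5|x-y|$: the case $|x-y|<t/5$ is immediate from the $\VC$ bound applied to both bracketed differences. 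If $|x-y|\ge t/5$ and $|y|<M$, then $F(y)=0$ and $\|F(x)\|_X=\|f(x)-f(\tau_M(x))\|_X\le\varepsilon'|x-\tau_M(x)|^\alpha$ (as $|x|\ge M>t^{-1}$), while the explicit formula for $\tau_M$ gives $|x-\tau_M(x)|\le4|x-y|$ — splitting $M\le|x|<2M$, where $|x-\tau_M(x)|\le4(|x|-M)\le4(|x|-|y|)$, from $|x|\ge2M$, where $\tau_M(x)=0$ and $|x|\le2|x-y|$. In the remaining regime $|x-y|\ge t/5$ with $|x|,|y|\ge M$, we have $\|f(x)-f(y)\|_X\le\varepsilon'|x-y|^\alpha$, and for $T:=\|f(\tau_M(x))-f(\tau_M(y))\|_X$ we split once more: if $\max(|\tau_M(x)|,|\tau_M(y)|)>t^{-1}$ or $|\tau_M(x)-\tau_M(y)|<t$ then $T\le\varepsilon'(5|x-y|)^\alpha$ by the $\VC$ bound, leaving only the delicate case
\[
|\tau_M(x)|\le t^{-1},\qquad |\tau_M(y)|\le t^{-1},\qquad |\tau_M(x)-\tau_M(y)|\ge t.
\]

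This delicate case is the main obstacle, and it is settled by a geometric computation using the explicit form of $\tau_M$. Writing $\tau_M(x)=\lambda_x x$ with $\lambda_x=\big((2M-|x|)_+/M\big)^2$, the bound $|\tau_M(x)|=\lambda_x|x|\le t^{-1}$ together with $|x|\ge M$ forces, for $M$ large, $|x|\ge 2M-\sqrt{M/t}$, and similarly for $y$ (note $|\tau_M(x)-\tau_M(y)|\ge t>0$ rules out $\tau_M(x)=\tau_M(y)=0$). Expanding $|\tau_M(x)-\tau_M(y)|^2=\big(|\tau_M(x)|-|\tau_M(y)|\big)^2+2|\tau_M(x)|\,|\tau_M(y)|\,(1-\cos\theta)$ with $\theta$ the angle between $x$ and $y$, the hypothesis $|\tau_M(x)-\tau_M(y)|\ge t$ forces either the radial gap $|x|-|y|$ or (via $|\tau_M(x)|,|\tau_M(y)|\le t^{-1}$) the quantity $1-\cos\theta$ to be large enough that $|x-y|\gtrsim_t\sqrt M$. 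Since $T\le A|\tau_M(x)-\tau_M(y)|^\alpha\le A(2t^{-1})^\alpha$, this yields $T/|x-y|^\alpha\lesssim_{t,A}M^{-\alpha/2}\le\varepsilon$ once $M=M(t,A,\varepsilon)=M(\varepsilon)$ is taken large; collecting the constants into $C(\alpha)$ finishes the third item.

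The fourth item is proved just like the second, now with the $M$ just fixed. Put $h=f\circ\tau_M$, so $\{h:f\in G\}\subset_u\VC^{0,\alpha}$ with some threshold $\tilde t$ by the first item, and write $[h*\varphi_{1/K}-h](x)-[h*\varphi_{1/K}-h](y)=\int\{[h(x-z)-h(x)]-[h(y-z)-h(y)]\}\varphi_{1/K}(z)\ud z$ using $\int\varphi_{1/K}=1$. When $|x-y|<\tilde t$, regroup the integrand as $[h(x-z)-h(y-z)]-[h(x)-h(y)]$ and apply the $\VC$ bound to both differences; when $|x-y|\ge\tilde t$, take $K$ so large that $\operatorname{supp}\varphi_{1/K}\subset B(0,\tilde t)$ and apply the $\VC$ bound to $[h(x-z)-h(x)]$ and $[h(y-z)-h(y)]$ (here $|z|<\tilde t$), gaining a factor $(1/K)^\alpha$ against the fixed lower bound $|x-y|^\alpha\ge\tilde t^\alpha$. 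Integrating against $|\varphi_{1/K}|$ and choosing $K=K(M,\varepsilon)$ large completes the proof.
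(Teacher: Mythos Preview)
The paper does not give its own proof of this lemma: it is quoted verbatim as \cite[Corollary~2.5]{MudOik24} and then used as a black box. So there is no in-paper argument to compare against, and what matters is whether your sketch actually establishes the four items.

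Your plan is correct and complete. The first two items are routine once one uses the $5$-Lipschitz bound for $\tau_L$ and the compact range/support; your treatment is fine. The fourth item is a standard convolution argument, and your two-case split (regrouping differently according to whether $|x-y|<\tilde t$ or $|x-y|\ge\tilde t$) is exactly the right trick: the first case uses only the uniform $\VC$ level, while in the second the gain $(1/K)^\alpha$ beats the fixed denominator $\tilde t^\alpha$.

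The real content is item~3, and your handling of the ``delicate case'' is the right one. The key geometric observation---that $|\tau_M(x)|\le t^{-1}$ together with $|x|\ge M$ pushes $|x|$ into the thin shell $[2M-\sqrt{M/t},\,2M]$, and that the radial/angular decomposition of $|\tau_M(x)-\tau_M(y)|^2\ge t^2$ then forces $|x-y|\gtrsim_t\sqrt{M}$---is exactly what makes the argument work, and it is the same mechanism behind the proof in \cite{MudOik24}. One should also check the degenerate subcase $\tau_M(y)=0$, $\tau_M(x)\ne 0$ separately (the angular decomposition is vacuous then), but there $|\tau_M(x)|\ge t$ forces $2M-|x|\gtrsim_t\sqrt{M}$ while $|y|\ge 2M$, so again $|x-y|\ge|y|-|x|\gtrsim_t\sqrt{M}$; this is implicit in your parenthetical remark. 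In Case~B your bound $|x-\tau_M(x)|\le 4(|x|-M)$ for $M\le|x|<2M$ follows cleanly from $1-\lambda_x=u(2-u)\le 2u$ with $u=(|x|-M)/M$, as you indicate.

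In short: your proof is sound and matches in spirit the argument of \cite{MudOik24} that the paper is citing.
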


\begin{proof}[Proof of Theorem \ref{thm:approx:VCVC}]
	We begin by showing $C^{\infty}_c(\R^d) \subset \VC^{0,\vec{\beta}}(\R^d)$. 
	So let $f\in C^{\infty}_c(\R^d)$ and denote (not to be confused with the Laplace)
	\begin{equation}\label{eq:deltaop}
		\begin{split}
			\Delta f(x,y) &= f(x_1,x_2)-f(x_1,y_2)-f(y_1,x_2)+f(y_1,y_2) \\
			&= f_{x_1, y_1}(x_2) - f_{x_1, y_1}(y_2)
			= f_{x_2, y_2}(x_1) - f_{x_2, y_2}(y_1).
		\end{split}
	\end{equation} 
	We have
	\begin{equation*}
		\begin{split}
			|\Delta f(x,y)| \le \|\nabla_{\R^{d^2}} f_{x_1, y_1}\|_{L^{\infty}} |x_2-y_2|,
		\end{split}
	\end{equation*}
	where
	$$
	\|\nabla_{\R^{d^2}} f_{x_1, y_1}\|_{L^{\infty}} 
	\lesssim \min(1, \sup_{j,k} \|\partial_j \partial_k f\|_{L^{\infty}}|x_1-y_1|)
	\lesssim \min(1, |x_1-y_1|).
	$$
	It follows that
	\begin{align*}
		\babs{\Delta f(x,y)} \lesssim
		\min\big(1,\,  \abs{x_2-y_2}\abs{x_1-y_1},  \abs{x_1-y_1},  \abs{x_2-y_2}\big).
	\end{align*}
	Let $\varepsilon > 0$. Suppose $|x_1-y_1| < t$, where $t = t(\varepsilon)$ is small.
	Notice that $|x_1-y_1| \le t^{1-\beta_1} |x_1-y_1|^{\beta_1}$.
	Then we get
	$$
	|\Delta f(x,y)| \lesssim |x_1-y_1|\min(1, |x_2-y_2|)
	\le t^{1-\beta_1} |x_1-y_1|^{\beta_1} |x_2-y_2|^{\beta_2}.
	$$
	This shows that $|\Delta f(x,y)| \le \varepsilon |x_1-y_1|^{\beta_1} |x_2-y_2|^{\beta_2}$
	if $|x_1-y_1| < t$. By symmetry, this is true also if $|x_2-y_2| < t$.
	
	We need to derive the above estimate for $|\Delta f(x,y)|$ also if
	$|x_i| > t^{-1}$ or $|y_i| > t^{-1}$ for either $i=1$ or $i=2$.
	Suppose for instance $|y_1| > t^{-1}$ -- the other cases are symmetric.
	We assume $t$ is so small
	that this implies $f(y_1, z_2) = 0$ for all $z_2$ using that $f$
	is compactly supported. We may then assume $|x_1| \lesssim 1$, since
	otherwise $\Delta f(x,y) = 0$. This yields $|x_1-y_1| \gtrsim |y_1| > t^{-1}$.
	We first estimate
	\begin{align*}
		|\Delta f(x,y)| \lesssim \min(1, |x_2-y_2|) \le |x_2-y_2|^{\beta_2}.
	\end{align*}
	Then we simply write
	$$
	1 = |x_1-y_1|^{-\beta_1} |x_1-y_1|^{\beta_1},
	$$
	where, using $|x_1-y_1| \gtrsim t^{-1}$, we get
	$$
	|x_1-y_1|^{-\beta_1} \lesssim t^{\beta_1},
	$$
	and so
	$$  
	|\Delta f(x,y)| \lesssim t^{\beta_1} |x_1-y_1|^{\beta_1} |x_2-y_2|^{\beta_2}
	$$
	implying the desired estimate. We have
	proved $C^{\infty}_c \subset \VC^{0,\vec{\beta}}$. The inclusion
	$\overline{C^{\infty}_c}^{\dot{\op{C}}^{0,\vec{\beta}}} \subset
	\VC^{0,\vec{\beta}}$ follows. Indeed,
	if $f_i \to f$ in $\dot C^{0,\vec{\beta}}$, where $f_i \in \VC^{0,\vec{\beta}}$,
	it follows very easily that $f \in \VC^{0,\vec{\beta}}$. First, notice that
	\begin{align*}
		|\Delta f(x,y)| &\le |\Delta(f-f_i)(x,y)| + |\Delta f_i(x,y)| \\
		&\le \|f-f_i\|_{\dot C^{0,\vec{\beta}}}|x_1-y_1|^{\beta_1}|x_2-y_2|^{\beta_2}
		+ |\Delta f_i(x,y)|.
	\end{align*}
	It remains to fix $\varepsilon > 0$ and then $i$ so that $\|f-f_i\|_{\dot C^{0,\vec{\beta}}} < \varepsilon$,
	and then find the desired $t$ by using that $f_i \in \VC^{0,\vec{\beta}}$.
	
	For the other inclusion fix $f\in \VC^{0,\vec{\beta}}(\R^{d})$ -- we will construct a
	smooth compactly supported approximation. First, define
	\begin{align}
		g_M^2(x)  := \left(f(x_1,\cdot)\circ \tau_M^2\right)(x_2).
	\end{align}
	In what follows we will use the above obvious interpretation when taking, for instance,
	composition or convolutions with function defined only in one of the parameters.
	
	Let $\varepsilon > 0$. We claim that if $M = M(\varepsilon)>0$ is sufficiently large, then 
	\begin{align}\label{eq:X1}
		g_M^2 \in \VC^{0,\vec{\beta}}(\R^{d}),\qquad 
		\| f-g_M^2 \|_{\dot{\op{C}}^{0,\vec{\beta}}(\R^d)}\leq \varepsilon.
	\end{align}
	By Lemma \ref{lem:biVCuniform} and Lemma \ref{lem:MudOik} we choose $M$ so large that 
	\[
	\sup_{\substack{x_1,y_1 \in\R^{d_1} \\ x_1\not= y_1}}   
	\| f_{x_1,y_1}^{\beta_1} - f_{x_1,y_1}^{\beta_1}\circ \tau_M^2\|_{\dot{\op{C}}^{0,\beta_2}(\R^{d_2})} 
	\leq \varepsilon
	\]
	and this gives us the bound 
	\begin{align*}
		&\frac{|\Delta (f-f\circ\tau_M^2)(x,y)|}{|x_1-y_1|^{\beta_1}}
		= \frac{|\Delta f(x,y) - \Delta\left( f\circ\tau_M^2\right)(x,y)|}{|x_1-y_1|^{\beta_1}} \\ 
		&=  \Big| \left(  f_{x_1,y_1}^{\beta_1}(x_2) 
		-  \left(f_{x_1,y_1}^{\beta_1}\circ\tau_M^2\right)(x_2)\right)
		-\left(   f_{x_1,y_1}^{\beta_1}(y_2) - \left(f_{x_1,y_1}^{\beta_1}\circ\tau_M^2\right)(y_2)\right) \Big| \\
		&\leq \| f_{x_1,y_1}^{\beta_1} - f_{x_1,y_1}^{\beta_1}\circ \tau_M^2\|_{\dot{\op{C}}^{0,\beta_2}(\R^{d_2})} 
		|x_2-y_2|^{\beta_2} \leq \varepsilon |x_2-y_2|^{\beta_2}.
	\end{align*}
	This shows $\| f-g_M^2 \|_{\dot{\op{C}}^{0,\vec{\beta}}(\R^d)}\leq \varepsilon$.
	To check the claim on the left hand side of \eqref{eq:X1} notice that
	by Lemma \ref{lem:biVCuniform} we need to check that
	\[
	\big\{ (g_M^2)_{x_i,y_i}^{\beta_i} \big\}_{\substack{x_i,y_i \in\R^{d_i} 
			\\ x_i\not= y_i}}\subset_u \VC^{0,\beta_j}(\R^{d_j}).
	\] 
	The case  $i=1$ and $j=2$ is exactly the first bullet from Lemma \ref{lem:MudOik}.
	For the case $i=2$ and $j=1,$ using that $\Lip(\tau_M^2)\leq 5$ we have 
	\begin{align*}
		&\frac{|(f\circ\tau_M^2)_{x_2,y_2}^{\beta_2}(x_1)
			-(f\circ\tau_M^2)_{x_2,y_2}^{\beta_2}(y_1)|}{|x_1-y_1|^{\beta_1}}  
		= \frac{\big| \Delta f(x_1, \tau_M^2(x_2), y_1, \tau_M^2(y_2)) \big|}
		{|x_1-y_1|^{\beta_1}|x_2-y_2|^{\beta_2}} \\ 
		&\qquad \lesssim \frac{\Big| \Delta f(x_1, \tau_M^2(x_2), y_1, \tau_M^2(y_2))\Big|}
		{|x_1-y_1|^{\beta_1}|\tau_M^2(x_2)-\tau_M^2(y_2)|^{\beta_2}}
		= \frac{  \big|  f_{\tau_M^2(x_2),\tau_M^2(y_2)}^{\beta_2}(x_1)
			-  f_{\tau_M^2(x_2),\tau_M^2(y_2)}^{\beta_2}(y_1) \big|}{|x_1-y_1|^{\beta_1}}
	\end{align*}
	and now the claim follows immediately from the assumption $f\in \VC^{0,\vec{\beta}}(\R^d),$ or from the line \eqref{eq:biVCequiv} of Lemma \ref{lem:biVCuniform}.
	
	Now, since $g_M^2$ satisfies the same assumptions as $f$ did, repeating the above argument shows that for all $M' = M'(\varepsilon)>0$ sufficiently large,
	\begin{align}\label{eq:X2}
		g_{M',M}^{1,2} := g_M^2 \circ\tau_{M'}^1 \in \VC^{0,\vec{\beta}}(\R^{d}),\qquad \| g_M^2-  	g_{M',M}^{1,2}\|_{\dot{\op{C}}^{0,\vec{\beta}}(\R^d)}\leq \varepsilon.
	\end{align}
	Clearly, the function $g_{M',M}^{1,2}$ is boundedly supported on $\R^d.$
	
	Next, we arrange smoothness.
	We claim that if $K = K(\varepsilon)>0$ is sufficiently large, then 
	\begin{align}\label{eq:X3}
		h_K^2:= g_{M',M}^{1,2} *\varphi_{1/K}^2\in\VC^{0,\vec{\beta}}(\R^d),\qquad \|  h_K^2 - g_{M',M}^{1,2}  \|_{\dot C^{0,\vec{\beta}}(\R^d)}\leq \varepsilon.
	\end{align}
	From $g_{M',M}^{1,2} \in \VC^{0,\vec{\beta}}(\R^d)$ the  second and fourth bullets of Lemma \ref{lem:MudOik} show that if $K = K(\varepsilon)>0$ is sufficiently large, then 
	\begin{align}\label{eq:aye1}
		\Big\{ \left(h_K^2\right)_{x_1,y_1}^{\beta_1} \big\}_{\substack{x_1,y_1 \in\R^{d_1} \\ x_1\not= y_1}}\subset_u \VC^{0,\beta_2}(\R^{d_2})
	\end{align}
	and
	\begin{equation}\label{eq:aye2}
		\begin{split}
			&\sup_{\substack{x_1,y_1 \in\R^{d_1} \\ x_1\not= y_1}} \Big\| \left(h_K^2 - g_{M',M}^{1,2}\right)_{x_1,y_1}^{\beta_1}\Big\|_{\dot C^{0,\beta_2}(\R^{d_2})}  \\
			&=\sup_{\substack{x_1,y_1 \in\R^{d_1} \\ x_1\not= y_1}} \Big\| (h_K^2)_{x_1,y_1}^{\beta_1} - (h_K^2)_{x_1,y_1}^{\beta_1}*\varphi_{1/K}^2\Big\|_{\dot C^{0,\beta_2}(\R^{d_2})}  \leq  \varepsilon.
		\end{split}
	\end{equation}
	For the claim on the right hand side of \eqref{eq:X3} we use \eqref{eq:aye2} together with
	$$
	\sup_{x_1\not=y_1\in\R^{d_1}}\| \psi_{x_1,y_1}\|_{\dot C^{0,\beta_2}(\R^{d_2})}
	= \| \psi\|_{\dot{\op{C}}^{0,\vec{\beta}}(\R^d)}.
	$$
	To check the other claim on the line \eqref{eq:X3}, since we already know  \eqref{eq:aye1},
	by Lemma \ref{lem:biVCuniform} it remains to verify that 
	\begin{align*}
		\Big\{ \left(h_K^2\right)_{x_2,y_2}^{\beta_2} \Big\}_{\substack{x_2,y_2 \in\R^{d_2}
				\\ x_2\not= y_2}}\subset_u \VC^{0,\beta_1}(\R^{d_1}).
	\end{align*}
	This follows from $g_{M',M}^{1,2} \in \VC^{0,\vec{\beta}}(\R^{d})$,
	Lemma \ref{lem:biVCuniform} and the bound 
	\begin{align*}
		\big|  \left(h_K^2\right)_{x_2,y_2}^{\beta_2}(x_1) 
		&- \left(h_K^2\right)_{x_2,y_2}^{\beta_2}(y_1) \big|  
		= \left|  \int_{\R^{d_2}}\varphi_{1/K}^2(h)\frac{\Delta 
			g_{M',M}^{1,2}\left((x_1,x_2-h),(y_1,y_2-h)\right)}{|x_2-y_2|^{\beta_2}}\ud h  \right| \\                                                    
		&\leq \sup_{h\in\R^{d_2 }}\frac{\left|\Delta g_{M',M}^{1,2}
			\left((x_1,x_2-h),(y_1,y_2-h)\right) \right|}{|(x_2-h)-(y_2-h)|^{\beta_2}} \\ 
		&= \sup_{h\in\R^{d_2 }} \left|  \big(g_{M',M}^{1,2}\big)_{x_2-h,y_2-h}^{\beta_2}(x_1)
		-  \big(g_{M',M}^{1,2}\big)_{x_2-h,y_2-h}^{\beta_2}(y_1)  \right|.
	\end{align*}
	So we have now checked both claims on the line \eqref{eq:X3}, and in particular this
	allows us to repeat the above argument once again to obtain the approximation
	\begin{align}\label{eq:X4}
		h_{K',K}^{1,2}:= h_K^2 *\varphi_{1/K'}^2\in\VC^{0,\vec{\beta}}(\R^d),
		\qquad \|  h_K^2 - h_{K',K}^{1,2}  \|_{\dot C^{0,\vec{\beta}}(\R^d)}\leq \varepsilon,
	\end{align}
	valid for $K' = K'(\varepsilon)$ large enough.
	To conclude, let $N = \max(M,M',K,K')$ and  combine  \eqref{eq:X1}, \eqref{eq:X2},
	\eqref{eq:X3}, \eqref{eq:X4} to obtain
	$$
	\widetilde{f}_N := \left( \left( \left( f\circ\tau_N^2\right) 
	\circ\tau_{N}^1\right)*\varphi_{1/N}^1 \right)*\varphi_{1/N}^2,\qquad
	\|  f - \widetilde{f}_N  \|_{\dot C^{0,\vec{\beta}}(\R^d)}\leq 4\varepsilon;
	$$ 
	clearly $\widetilde{f}_N$ is smooth and boundedly supported.
\end{proof}

\subsection{Approximation in $\BMO$-valued H\"older spaces} 
\begin{thm}\label{thm:approx:VCVMO} Let $\R^d=\R^{d_1}\times\R^{d_2}$ and $\alpha \in (0,1).$
	Then, there holds that 
	\[
	\VMO^{\alpha,0}(\R^{d})= \overline{C^{\infty}_c(\R^d)}^{\BMO^{\alpha,0}(\R^d)}.
	\]
\end{thm}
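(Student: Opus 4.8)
The plan is to mirror the structure of the proof of Theorem \ref{thm:approx:VCVC}, but now using the characterization of $\VMO^{\alpha,0}$ provided by Lemma \ref{lem:VMOa0uniform}: namely $f \in \VMO^{\alpha,0}(\R^d)$ iff $f \in \VC^{0,\alpha}(\R^{d_1}, \BMO(\R^{d_2}))$ and $\{f_{x_1,y_1}^\alpha\}_{x_1 \ne y_1} \subset_u \VMO(\R^{d_2})$. The first (easy) inclusion $\overline{C^\infty_c}^{\BMO^{\alpha,0}} \subset \VMO^{\alpha,0}$ reduces, just as before, to showing $C^\infty_c \subset \VMO^{\alpha,0}$ together with the fact that a $\BMO^{\alpha,0}$-limit of $\VMO^{\alpha,0}$ functions stays in $\VMO^{\alpha,0}$ (this follows immediately from $\calO^{\alpha,0}(f,R) \le \calO^{\alpha,0}(f-f_i,R) + \calO^{\alpha,0}(f_i,R)$ and $\|f-f_i\|_{\BMO^{\alpha,0}}\to 0$). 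For $f \in C^\infty_c$, Theorem \ref{thm:CBMO} gives $f \in \BMO^{\alpha,0}$, and the oscillation smallness is checked exactly along the lines of the $C^\infty_c \subset \VC^{0,\vec\beta}$ computation: for a rectangle $R = I_1\times I_2$ one estimates $\osc(f,R) \lesssim \min(\ell(I_1), \ell(I_1)^{\beta_1}) \cdot$(something controlled), handling separately $\ell(I_1) < t$, $\ell(I_2) < t$ (here using that $\BMO$-oscillation of a smooth compactly supported function over a tiny cube is tiny), and $\dist(R,\{0\})$ large (where the function vanishes).

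The substantial direction is $\VMO^{\alpha,0} \subset \overline{C^\infty_c}^{\BMO^{\alpha,0}}$. Here I would again build the approximant in four stages — $f \mapsto f\circ\tau_M^2 \mapsto (f\circ\tau_M^2)\circ\tau_{M'}^1 \mapsto \text{convolve in }x_2 \mapsto \text{convolve in }x_1$ — exactly as in the proof of Theorem \ref{thm:approx:VCVC}, and for each stage verify that (a) the new function still lies in $\VMO^{\alpha,0}$ and (b) it is close to the previous one in $\BMO^{\alpha,0}$. Via Lemma \ref{lem:VMOa0uniform}, membership in $\VMO^{\alpha,0}$ decomposes into the $\VC^{0,\alpha}$-in-$x_1$ condition (with values in $\BMO(\R^{d_2})$) and the uniform-$\VMO$-in-$x_2$ condition for the family $\{f_{x_1,y_1}^\alpha\}$. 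The operations in the first two stages act on the $x_1$ and $x_2$ variables respectively, and I expect the $\VC^{0,\alpha}$-part to be handled by Lemma \ref{lem:MudOik} applied with the Banach space $X = \BMO(\R^{d_2})$, while the uniform-$\VMO$-in-$x_2$ part is stable under $\tau_M^1$ and $\varphi_{1/K}^1$ acting in $x_1$ (since those do not touch $x_2$), and is handled under the $x_2$-operations $\tau_M^2, \varphi_{1/K}^2$ by the \emph{one-parameter} density statement $\VMO(\R^{d_2}) = \overline{C^\infty_c}^{\BMO}$ of Uchiyama \cite{Uch1978}, applied \emph{uniformly} over the family — i.e. I would need a uniform (over $g \in G$) version of the Uchiyama approximation lemma: if $G \subset_u \VMO(\R^n)$ then $G \circ \tau_M \subset_u \VMO$, $(G\circ\tau_M)*\varphi_{1/K} \subset_u \VMO$, and these approximate $G$ uniformly in $\BMO$. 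Establishing (or citing, via \cite{MudOik24, Uch1978}) this uniform one-parameter $\VMO$ approximation lemma — the exact analogue of Lemma \ref{lem:MudOik} but for $\BMO/\VMO$ in place of $\dot C^{0,\alpha}/\VC^{0,\alpha}$ — is the technical heart of the argument.

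The main obstacle, as just indicated, is the bookkeeping needed to keep the $\BMO(\R^{d_2})$-valued structure and the uniform-in-$(x_1,y_1)$ nature of the second condition of Lemma \ref{lem:VMOa0uniform} intact through all four regularization steps, together with proving the uniform one-parameter $\VMO$ approximation facts. Once those are in place, the two-variable closeness estimates are routine: for the $x_2$-convolution step one writes $(h_K - g)_{x_1,y_1}^\alpha(x_2) = (g_{x_1,y_1}^\alpha - g_{x_1,y_1}^\alpha * \varphi_{1/K}^2)(x_2)$ and measures its $\BMO(\R^{d_2})$ norm, uniformly over $x_1\ne y_1$, exploiting $\|\psi\|_{\BMO^{\alpha,0}} = \sup_{x_1\ne y_1}\|\psi_{x_1,y_1}^\alpha\|_{\BMO(\R^{d_2})}$ (which is just the content of Theorem \ref{thm:CBMO} with $v_i=1$); for the $x_1$-convolution step one uses the integral representation $\Delta(g*\varphi_{1/K'}^1)(x,y) = \int \varphi_{1/K'}^1(h)\,\Delta g((x_1-h,x_2),(y_1-h,y_2))\,\ud h$ and Minkowski, exactly as in the proof of Theorem \ref{thm:approx:VCVC}. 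Finally one sets $N = \max(M,M',K,K')$, forms $\widetilde f_N := ((( f\circ\tau_N^2)\circ\tau_N^1)*\varphi_{1/N}^1)*\varphi_{1/N}^2$, notes it is smooth and compactly supported, and concludes $\|f - \widetilde f_N\|_{\BMO^{\alpha,0}} \le 4\varepsilon$.
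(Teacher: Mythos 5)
Your overall architecture is in fact the paper's: reduce via Lemma \ref{lem:VMOa0uniform}, handle the $x_1$-variable with Lemma \ref{lem:MudOik} applied with $X=\BMO(\R^{d_2})$, exploit linearity to commute the regularization with the difference quotients $f\mapsto f^{\alpha}_{x_1,y_1}$, and handle the $x_2$-variable with a uniform one-parameter Uchiyama-type approximation; your easy direction is also fine (just note you must additionally treat the cases $\ell(I_1)>t^{-1}$ and $\ell(I_2)$ large, the latter needing the measure-ratio argument). The genuine gap is the lemma you single out as the technical heart: the ``exact analogue of Lemma \ref{lem:MudOik} for $\BMO/\VMO$'', with the same operations $g\mapsto g\circ\tau_M$ and mollification, is \emph{false}, and cannot be established or cited from \cite{MudOik24} (whose lemma is specific to H\"older-type targets). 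Composition with $\tau_M$ does not approximate in $\BMO$, not even for a single $C^{\infty}_c$ function: take $g\in C^{\infty}_c(\R^{d_2})$ radial with $g\equiv 1$ on $B(0,1)$ and $\supp g\subset B(0,2)$. Since $\tau^2_M\equiv 0$ on $\{|x_2|\ge 2M\}$, one has $g\circ\tau^2_M\equiv g(0)=1$ there, while $g\circ\tau^2_M\equiv 0$ on $\{M\le |x_2|\le 2M-\sqrt{M}\}$; so $g-g\circ\tau^2_M$ passes from $0$ to $-1$ across an annulus of width $\sim\sqrt{M}$ around the sphere $\{|x_2|=2M\}$, and any cube of side $\sim\sqrt{M}$ straddling that sphere (where $g$ itself vanishes) has mean oscillation $\gtrsim 1$. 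Hence $\|g-g\circ\tau^2_M\|_{\BMO(\R^{d_2})}\gtrsim 1$ for every $M$, and with $f=\phi\otimes g$, $\phi\in C^{\infty}_c(\R^{d_1})$ nonconstant, also $\sup_{x_1\ne y_1}\|(f-f\circ\tau^2_M)^{\alpha}_{x_1,y_1}\|_{\BMO(\R^{d_2})}\gtrsim 1$, so the very first stage of your four-stage scheme already fails in $\BMO^{\alpha,0}$. The mechanism behind Lemma \ref{lem:MudOik} is interpolation of the small Lipschitz constant of $g-g\circ\tau_M$ against its $L^{\infty}$ bound, which gives smallness in $\dot C^{0,\beta}$; $\BMO$ is dilation invariant, so flattening the transition does not shrink the oscillation at the matching scale. (Note also that $g\circ\tau_M$ is not compactly supported, being $\equiv g(0)$ at infinity.)

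What is needed at this point -- and what the paper uses -- is Uchiyama's actual approximation construction in place of truncate-and-mollify in $x_2$: his proof of the one-parameter density theorem produces, for each $N$, a \emph{linear} operator $L^2_Nf=S^2_N(f)*\varphi^2_{1/N}$ that is bounded on $\BMO(\R^{d_2})$, has output of uniformly bounded support in $x_2$, preserves uniform inclusions $G\subset_u\VMO(\R^{d_2})$, and approximates uniformly over such families (Lemma \ref{lem:ApproxUniformUchi1}); linearity gives $(L^2_Nf)^{\alpha}_{x_1,y_1}=L^2_N(f^{\alpha}_{x_1,y_1})$, so the uniform one-parameter statement upgrades to the $\BMO^{\alpha,0}$ estimate, and Lemma \ref{lem:ApproxPreserveUchi} keeps the $\VC^{0,\alpha}(\R^{d_1},\BMO(\R^{d_2}))$ structure intact; a single application of Lemma \ref{lem:MudOik} in $x_1$ (as in your plan) then finishes. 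So your proposal identifies the right reduction but replaces the one genuinely nontrivial ingredient by a false statement; without Uchiyama's operator (or another operator with the four properties above) the argument does not close.
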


\begin{proof}[Proof of Theorem \ref{thm:approx:VCVMO}, $``\supset"$ direction]
	We begin by showing $C^{\infty}_c(\R^d)\subset 	\VMO^{\alpha,0}(\R^{d})$.
	Recall $\Delta f$ from \eqref{eq:deltaop} and the pointwise estimates for it
	for $f \in C^{\infty}_c$. Fix a rectangle $R = I_1 \times I_2$ and $\varepsilon > 0$.
	We have the standard estimate
	\begin{align*}
		&\calO^{\alpha,0}(f, R) \lesssim \ell(I_1)^{-\alpha}\fint_R \fint_R
		|\Delta f(x,y)| \ud y \ud x \\ 
		&\lesssim \ell(I_1)^{-\alpha} \fint_{I_1} \fint_{I_2} \fint_{I_1} \fint_{I_2}
		\min\left(1,|x_1-y_1|, |x_2-y_2|, |x_1-y_1||x_2-y_2|\right) \ud y_2\ud y_1\ud x_2\ud x_1 \\
		&\lesssim  \ell(I_1)^{-\alpha}\min\left( 1, \ell(I_1),\, \ell(I_2),\, \ell(I_1)\ell(I_2) \right). 
	\end{align*}
	Let $t = t(\varepsilon) > 0$ be small enough. Suppose $\ell(I_1) < t$ -- then
	$$
	\calO^{\alpha,0}(f, R) \lesssim \ell(I_1)^{1-\alpha} < t^{1-\alpha}
	$$
	is small as $\alpha < 1$. If $\ell(I_2) < t$, we estimate, using $\alpha > 0$, that
	$$
	\calO^{\alpha,0}(f, R) \lesssim \ell(I_1)^{-\alpha} \min(1,\ell(I_1))\ell(I_2) \le \ell(I_2) < t.
	$$
	If $\ell(I_1) > t^{-1}$ we get
	$$
	\calO^{\alpha,0}(f, R) \lesssim \ell(I_1)^{-\alpha} < t^{\alpha}.
	$$
	It is also clear that $\Delta f = 0$ in the above integral if $\dist(R, \{0\})$
	is big enough. So the only remaining case to handle is when $\ell(I_2)$ is large.
	In this case we estimate
	\begin{align*}
		\calO^{\alpha,0}(f, R) &\lesssim \ell(I_1)^{-\alpha}\min(1, \ell(I_1)) 
		\fint_R \fint_R 1_{\supp \Delta f}(x,y) \ud y \ud x \\
		&\le \fint_{I_2} \fint_{I_2} 
		1_{\{|x_2| < C \textup{ or } |y_2| < C\}}(x_2, y_2) \ud y_2 \ud x_2
		\lesssim \frac{|B(0,C)|}{|I_2|}
	\end{align*}
	for some big $C > 0$ depending on the support of $f$. But obviously this is small
	if $|I_2|$ is large enough. Thus, $C^{\infty}_c(\R^d) \subset \VMO^{\alpha,0}(\R^{d})$
	and the claim follows as $\VMO^{\alpha, 0}$ is clearly closed in $\BMO^{\alpha, 0}$.
\end{proof}

Uchiyama  \cite[3. Lemma.]{Uch1978} constructs an approximation that can be written as a family of linear operators, parameterized by $N>0,$ of the generic form
\begin{align*}
	L_{N}^2f =  S_{N}^2(f)*\varphi_{1/N}^2,\qquad S_{N}^2:L^1_{\loc}(\R^{d_2})\to L^{\infty}(\R^{d_2}\cap B(0,r(N))),
\end{align*}
where $r(N)>0$ is a fixed radius that depends only on $N.$ In particular, $L_{N}^2$ (denoted here as such, Uchiyama does not explicitly name operators in his proof) distills out functions with uniformly bounded support.
In particular the following lemma, a quantitative version of Uchiyamas approximation, is easy to verify.
\begin{lem}[Uchiyama \cite{Uch1978}]\label{lem:ApproxUniformUchi1} For each $N>0$ the operator $L_N$ is bounded on $\BMO(\R^{d_2}).$
	Moreover, if $G\subset_u\VMO(\R^{d_2}),$ then 
	\begin{itemize}
		\item 
		for all $N>0$  there holds that 
		\[
		L_{N}^2\circ G\subset_u\VMO(\R^{d_2}),
		\]
		\item and for each $\varepsilon$ there exists $N = N(G,\varepsilon)>0$ sufficiently large so that 
		\[
		\sup_{g\in G}\| g - L_{N}^2(g) \|_{\BMO(\R^{d_2})}\leq \varepsilon.
		\]
	\end{itemize}
\end{lem}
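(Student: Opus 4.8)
\emph{Strategy, and the easy parts.} The lemma is a uniform-over-$G$ repackaging of the quantitative estimates behind Uchiyama's density theorem \cite{Uch1978}. Recall that $L_N^2 f = S_N^2(f)*\varphi_{1/N}^2$, where $S_N^2 f\in L^{\infty}$ is supported in $B(0,r(N))$ and — this being the content of Uchiyama's construction — $\|S_N^2 f\|_{\BMO(\R^{d_2})}\lesssim \|f\|_{\BMO(\R^{d_2})}$ while $\|f-S_N^2 f\|_{\BMO(\R^{d_2})}\le \eta_N(f)$, where $\eta_N(f)\to 0$ as $N\to\infty$ at a rate depending on $f$ only through $\|f\|_{\BMO}$ and a $\VMO$-modulus of $f$ (the function $t\mapsto \sup\{\fint_J|f-\ave{f}_J| : \ell(J)<t \text{ or } \ell(J)>t^{-1} \text{ or } \dist(J,0)>t^{-1}\}$). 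Boundedness of $L_N^2$ is then immediate: convolution with an $L^1$ kernel satisfies $\|h*\varphi\|_{\BMO}\le\|\varphi\|_{L^1}\|h\|_{\BMO}$ and $\|\varphi_{1/N}^2\|_{L^1}=\|\varphi_1^2\|_{L^1}$, so $\|L_N^2 f\|_{\BMO}\le \|\varphi_1^2\|_{L^1}\|S_N^2 f\|_{\BMO}\lesssim \|f\|_{\BMO}$. The remaining two steps are the uniform $\VMO$-inclusion and the uniform approximation.

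\emph{The uniform $\VMO$-inclusion.} Fix $N>0$ and $G\subset_u\VMO(\R^{d_2})$. By Remark \ref{rem:VMOu->BMOu}, $\sup_{g\in G}\|g\|_{\BMO}\lesssim 1$, hence $\sup_{g\in G}\|S_N^2 g\|_{L^{\infty}}\lesssim_N 1$ (Uchiyama's $L^{\infty}$-output bound depends only on $N$ and $\|g\|_{\BMO}$). Therefore $L_N^2 g$ is smooth, supported in a ball $B(0,R_N)$ with $R_N$ depending only on $N$, and
\[
\|\nabla(L_N^2 g)\|_{L^{\infty}}\le \|S_N^2 g\|_{L^{\infty}}\,\|\nabla\varphi_{1/N}^2\|_{L^1}\lesssim_N 1,
\]
all uniformly in $g\in G$. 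Given $\varepsilon>0$ pick $t=t(N,\varepsilon)>0$ small. For a cube $J\subset\R^{d_2}$ with $\ell(J)<t$ one has $\fint_J|L_N^2 g-\ave{L_N^2 g}_J|\le \ell(J)\|\nabla(L_N^2 g)\|_{L^{\infty}}\lesssim_N t$; for $\dist(J,0)>t^{-1}>2R_N$ one has $L_N^2 g\equiv 0$ on $J$; and for $\ell(J)>t^{-1}$,
\[
\fint_J|L_N^2 g-\ave{L_N^2 g}_J|\le \frac{2}{|J|}\|L_N^2 g\|_{L^1}\le \frac{2\,|B(0,R_N)|}{\ell(J)^{d_2}}\,\|S_N^2 g\|_{L^{\infty}}\lesssim_N t^{d_2}.
\]
All three are $<\varepsilon$ for $t$ small, so $L_N^2\circ G\subset_u\VMO(\R^{d_2})$.

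\emph{The uniform approximation.} Split $f-L_N^2 f=(f-f*\varphi_{1/N}^2)+(f-S_N^2 f)*\varphi_{1/N}^2$. The second term has $\BMO$-norm $\lesssim\|f-S_N^2 f\|_{\BMO}=\eta_N(f)$, Uchiyama's truncation error; the first has $\BMO$-norm $\to 0$ as $N\to\infty$ by the standard mollification characterization of $\VMO$, again at a rate depending on $f$ only through $\|f\|_{\BMO}$ and a $\VMO$-modulus of $f$. Since $G\subset_u\VMO(\R^{d_2})$ supplies a single $\VMO$-modulus valid for every $g\in G$, and $\sup_{g\in G}\|g\|_{\BMO}\lesssim 1$, both rates become uniform over $g\in G$, and hence $\sup_{g\in G}\|g-L_N^2 g\|_{\BMO(\R^{d_2})}\le\varepsilon$ once $N=N(G,\varepsilon)$ is large enough.

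\emph{Where the work is.} There is no new idea; the one thing that must actually be checked is that in \cite{Uch1978} every quantitative bound used for $S_N^2$ — its $\BMO$ operator norm, its $L^{\infty}$ output bound, and the rate of $\|f-S_N^2 f\|_{\BMO}\to 0$ — depends on $f$ only through $\|f\|_{\BMO}$ and a $\VMO$-modulus, with no finer dependence on $f$. Once this bookkeeping is confirmed, the hypothesis $G\subset_u\VMO$ — which is precisely the statement that the members of $G$ share a common $\VMO$-modulus — converts each of the pointwise-in-$g$ estimates above into its $\sup_{g\in G}$ counterpart, so this verification is the only potential obstacle.
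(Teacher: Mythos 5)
Your proposal is correct and matches the paper's approach: the paper gives no detailed argument, simply stating that this quantitative, uniform-over-$G$ version of Uchiyama's construction is ``easy to verify,'' and your sketch (BMO-boundedness via $\|S_N^2 f\|_{\BMO}\lesssim\|f\|_{\BMO}$ and Young-type convolution estimates, the uniform $\VMO$ inclusion from the $L^\infty$/support/gradient bounds, and the uniform approximation from a common $\VMO$-modulus for $G$) is exactly that verification. The bookkeeping you flag -- that Uchiyama's bounds for $S_N^2$ depend on $f$ only through $\|f\|_{\BMO}$ and its $\VMO$-modulus -- is precisely what the paper also takes from \cite{Uch1978} without further proof.
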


By the linearity of $L_N^2$ we have
\[
(L_N^2f)_{x_1,y_1}^{\alpha}= \frac{L_N^2(f(x_1,\cdot )) 
	- L_N^2(f(y_1,\cdot ))}{|x_1-y_1|^{\alpha}}
=  L_N^2\left( \frac{f(x_1,\cdot ) - f(y_1,\cdot )}{|x_1-y_1|^{\alpha}} \right)
= L_N^2(f_{x_1,y_1}^{\alpha}),
\]
a fact which we use below.
As $L_N^2$ is bounded on $\BMO(\R^{d_2}),$ we obtain the following. 
\begin{lem}\label{lem:ApproxPreserveUchi} Let $N>0$ be arbitrary. Then,  
	\[
	L_N^2 \circ  \VC^{0,\alpha}(\R^{d_1},\BMO(\R^{d_2}))  \subset  \VC^{0,\alpha}(\R^{d_1},\BMO(\R^{d_2})).
	\]
\end{lem}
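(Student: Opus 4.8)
The plan is to unwind the definition of $\VC^{0,\alpha}(\R^{d_1},\BMO(\R^{d_2}))$ and use the boundedness of $L_N^2$ on $\BMO(\R^{d_2})$ together with the key commutation identity $(L_N^2 f)_{x_1,y_1}^{\alpha} = L_N^2(f_{x_1,y_1}^{\alpha})$ recorded just above the statement. So, fix $f \in \VC^{0,\alpha}(\R^{d_1},\BMO(\R^{d_2}))$ and set $F = L_N^2 f$, meaning $F(x_1,\cdot) = L_N^2(f(x_1,\cdot))$. Since $L_N^2$ is bounded on $\BMO(\R^{d_2})$, the estimate $\|F_{x_1,y_1}^{\alpha}\|_{\BMO(\R^{d_2})} = \|L_N^2(f_{x_1,y_1}^{\alpha})\|_{\BMO(\R^{d_2})} \lesssim \|f_{x_1,y_1}^{\alpha}\|_{\BMO(\R^{d_2})} \lesssim \|f\|_{\dot C^{0,\alpha}_{x_1}(\BMO_{x_2})}$ shows $F \in \dot C^{0,\alpha}(\R^{d_1},\BMO(\R^{d_2}))$, so membership in the H\"older-valued space is immediate; the real content is the vanishing (the $\VC$ decay).

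Next I would spell out what $F \in \VC^{0,\alpha}(\R^{d_1},\BMO(\R^{d_2}))$ requires: for every $\varepsilon > 0$ there is $t>0$ so that $\|F_{x_1,y_1}^{\alpha}\|_{\BMO(\R^{d_2})} < \varepsilon$ whenever $|x_1-y_1| < t$ or $|x_1| > t^{-1}$ or $|y_1| > t^{-1}$. Using the commutation identity again, $\|F_{x_1,y_1}^{\alpha}\|_{\BMO(\R^{d_2})} = \|L_N^2(f_{x_1,y_1}^{\alpha})\|_{\BMO(\R^{d_2})} \le \|L_N^2\|_{\BMO(\R^{d_2})\to\BMO(\R^{d_2})}\,\|f_{x_1,y_1}^{\alpha}\|_{\BMO(\R^{d_2})}$, where the operator norm of $L_N^2$ is a finite constant $C_N$ independent of $x_1,y_1$ by the first sentence of Lemma \ref{lem:ApproxUniformUchi1}. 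Now invoke $f \in \VC^{0,\alpha}(\R^{d_1},\BMO(\R^{d_2}))$: given $\varepsilon > 0$, apply its defining property with $\varepsilon/C_N$ in place of $\varepsilon$ to get $t>0$ such that $\|f_{x_1,y_1}^{\alpha}\|_{\BMO(\R^{d_2})} < \varepsilon/C_N$ in the relevant regime of $(x_1,y_1)$. Then $\|F_{x_1,y_1}^{\alpha}\|_{\BMO(\R^{d_2})} \le C_N \cdot \varepsilon/C_N = \varepsilon$ in the same regime, which is exactly what is required.

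The only genuine point to be careful about is that $L_N^2$ really is a bounded operator on $\BMO(\R^{d_2})$ with a norm $C_N$ that does not depend on the $\R^{d_1}$-variable $x_1$ — this is guaranteed because $L_N^2$ acts only in the $x_2$-variable and is the same operator for every slice. Everything else is a one-line chase through the two identities $F_{x_1,y_1}^{\alpha} = L_N^2(f_{x_1,y_1}^{\alpha})$ and $\|L_N^2 g\|_{\BMO(\R^{d_2})} \le C_N \|g\|_{\BMO(\R^{d_2})}$. I do not expect any real obstacle here; this lemma is a bookkeeping step isolating the fact that a fixed $\BMO$-bounded one-parameter operator preserves the $\BMO(\R^{d_2})$-valued H\"older class with the vanishing property, and it will be combined later with the uniform-$\VMO$ approximation statements of Lemma \ref{lem:ApproxUniformUchi1} to carry out the actual density argument for $\VMO^{\alpha,0}$.
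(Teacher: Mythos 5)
Your proposal is correct and is essentially the paper's own argument: both rest on the commutation identity $(L_N^2 f)_{x_1,y_1}^{\alpha} = L_N^2(f_{x_1,y_1}^{\alpha})$ and the $\BMO(\R^{d_2})$-boundedness of $L_N^2$, applied to the $t$ furnished by the $\VC$ property of $f$. Your only addition is the explicit $\varepsilon/C_N$ rescaling, which the paper absorbs into a $\lesssim\varepsilon$.
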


\begin{proof} 
	Fix $f \in \VC^{0,\alpha}(\R^{d_1},\BMO(\R^{d_2}))$. 
	Let  $\varepsilon>0$. Fix $t$
	so that if $|x_1-y_1|<t$ or $|x_1|>t^{-1}$ or $|y_1|>t^{-1},$ then
	$\| f^{\alpha}_{x_1,y_1} \|_{\BMO(\R^{d_2})}\leq \varepsilon$.
	Then, by the boundedness of $L_N^2$ on $\BMO$, if $x_1, y_1$ satisfy
	one of the above conditions, we obtain 
	\[
	\| (L_N^2f)^{\alpha}_{x_1,y_1}\|_{\BMO(\R^{d_2})}  
	= \|L_N^2(f_{x_1,y_1}^{\alpha})\|_{\BMO(\R^{d_2})}
	\lesssim \| f^{\alpha}_{x_1,y_1} \|_{\BMO(\R^{d_2})} \le \varepsilon. 
	\]
	We are done.
\end{proof}

\begin{proof}[Proof of Theorem \ref{thm:approx:VCVMO}, $``\subset"$ direction] 
	Let $f\in\VMO^{(\alpha,0)}(\R^d)$ and $\varepsilon > 0$. By Lemma \ref{lem:VMOa0uniform} we have 
	\begin{align}\label{oo}
		f\in \VC^{0,\alpha}(\R^{d_1},\BMO(\R^{d_2})),\qquad 
		\left\{ f_{x_1,y_1}^{\alpha} \right\}_{\substack{x_1,y_1 \in\R^{d_1} \\ x_1\not= y_1}}
		\subset_u\VMO(\R^{d_2}).
	\end{align}
	The uniform inclusion and Lemma \ref{lem:ApproxUniformUchi1} give for $N = N(\varepsilon)>0$ 
	large enough that
	\begin{equation}\label{eq:lit1}
		\begin{split}
			\| f - L_{N}^2(f)  \|_{\dot C^{0,\alpha}(\R^{d_1},\BMO(\R^{d_2}))} &= \sup_{\substack{x_1,y_1 \in\R^{d_1} \\ x_1\not= y_1}} \|  (f  - L_N^2(f))^{\alpha}_{x_1,y_1}  \|_{\BMO(\R^{d_2})} \\
			&= \sup_{\substack{x_1,y_1 \in\R^{d_1} \\ x_1\not= y_1}} \|  f^{\alpha}_{x_1,y_1}  - L_N^2(f^{\alpha}_{x_1,y_1}) \|_{\BMO(\R^{d_2})}\leq \varepsilon.
		\end{split}
	\end{equation}
	The inclusion on the line \eqref{oo} with Lemma \ref{lem:ApproxPreserveUchi} gives
	\begin{align}
		L_{N}^2(f) \in \VC^{0,\alpha}(\R^{d_1},\BMO(\R^{d_2})).
	\end{align}
	By Lemma \ref{lem:MudOik} with $M = M(\varepsilon)>0$ sufficiently large we have 
	\begin{align}\label{eq:lit2}
		\| L_{N}^2(f) - \left(\left(L_{N}^2(f)\right)\circ\tau_M^1\right)*\varphi_{1/M}^1 \|_{\dot{C}^{0,\alpha}(\R^{d_1}, \BMO(\R^{d_2}) )}\leq \varepsilon.
	\end{align}
	To see the bounded support and smoothness of the final approximant, write
	\begin{align}
		\left(L_{N}^2(f)\circ\tau_M^1\right)*\varphi_{1/M}^1 = 	\left(S_N^2(f)\circ\tau_M^1\right)*(\varphi_{1/M}^1\otimes \varphi_{1/N}^2) =: \wt{f}.
	\end{align}
	Moreover, having \eqref{eq:lit1} and \eqref{eq:lit2} together shows that
	\begin{align*}
		&\| f - \wt{f} \|_{\BMO^{(\alpha,0)}(\R^d)}
		\sim\| f - \wt{f} \|_{\dot C^{0,\alpha}(\R^{d_1},\BMO(\R^{d_2}))} \\ 
		&\leq 	\| f - L_{N}^2(f)  \|_{\dot C^{0,\alpha}(\R^{d_1},\BMO(\R^{d_2}))} 
		+ 	\| L_{N}^2(f) - \wt{f} \|_{\dot C^{0,\alpha}(\R^{d_1},\BMO(\R^{d_2}))} 
		\leq 2\varepsilon.
	\end{align*}
\end{proof}

\section{Sufficient conditions for compactness}
We prove various, often optimal (as discussed later in connection with the necessary conditions),
sufficient conditions for
the compactness of bi-commutators. We start by proving the strictly fractional case directly
-- many of the remaining cases are then deduced from this strictly fractional case via
an abstract principle (for related one-parameter principles see the note \cite{Oik24} by
the last named author).
\subsection{The case $p_i < q_i$, $i=1,2$}
Let $\varphi \in C^{\infty}_c(\R^d)$ with $0 \le \varphi \le 1$,
$\varphi(x) = 1$ for $|x| \le 1/2$ and $\varphi(x) = 0$ for $|x| \ge 1$.
For $x \in \R^d$ and $t > 0$ write
\begin{align*}
    L_{x, t}(y) &:= \varphi(t(y-x)), \\
    G_{x, t}(y) &:= 1 - L_{x,t}(y), \\
    1 &= L_{x,t}(y) + G_{x,t}(y).
\end{align*}
(Here $L$ is for local and $G$ for global.)
If $x = 0$ we abbreviate $L_{0,t} = L_t$, $G_{0, t} = G_t$.

Write for $0 < t < 1$ that
$$
Tf(x) = L_{t}(x)T(L_t f)(x) + L_t(x)T(G_t f)(x) + G_t(x)Tf(x).
$$
Then, we further write
$$
L_t(x)T(L_t f)(x) = L_t(x) T(L_t L_{x, t^{-1}}f)(x) + L_t(x) T(L_t G_{x, t^{-1}} f)(x).
$$
Notice that if $L_t(x) \ne 0$ and $L_t(y) G_{x, t^{-1}}(y) \ne 0$, then
$|x| < t^{-1}$, $|y| < t^{-1}$ and $|x-y| > t/2$, and so $t/2 < |x-y| < 2t^{-1}.$
Now, the decomposition is
\begin{align*}
    Tf(x) = T_{C}f(x) + T_{E}f(x),    
\end{align*}
where
$$
    T_{C}f(x) := L_t(x) T(L_t G_{x, t^{-1}} f)(x)
$$
and
$$
    T_{E}f(x) := L_t(x) T(L_t L_{x, t^{-1}}f)(x) +  L_t(x)T(G_t f)(x)
                + G_t(x)Tf(x).
$$
Here one can think that $C$ stands for compact and $E$ for error.
These operators depend on the parameter $t$ but it is not highlighted
in the notation.

Now, let $T_i$, $i = 1,2$, be a CZO on $\R^{d_i}$, and let $d = d_1 + d_2$. 
We write
$$
T_i = T_{i, C} + T_{i, E}
$$
using the above decomposition on $\R^{d_i}$ with functions $\varphi_i$,
$L_{x,t}^i$ and $G_{x,t}^i$.
The compactness of $[T_{1, C}, [b, T_{2, C}]]$ is proved using the Fr\'echet--Kolmogorov,
which we now recall. It is a general fact that Fr\'echet-Kolmogorov holds in
Banach function spaces. Indeed, that is the content of \cite[Theorem 3.1.]{GuoZha2020},
which in the special case of $L^{s,t}$
produces the following Fr\'echet-Kolmogorov for mixed Lebesgue spaces. 

\begin{lem} Let $s,t\in (1,\infty)$.
    Then, there holds that $\calF\subset L^{s,t}$
    is relatively compact if and only if the family of functions $\calF$ is  
	\begin{itemize}
		\item equibounded: $\sup_{f\in\calF}\|f\|_{L^{s,t}} \lesssim 1,$
		\item equivanishing: $\lim_{R\to\infty}\sup_{f\in\calF}\| 1_{B(0,R)^c} f\|_{L^{s,t}} = 0,$ and 
		\item equicontinuous: $\lim_{h \to 0}\sup_{f\in\calF}
            \|  f(\cdot) - f(\cdot + h)\|_{L^{s,t}} = 0.$
	\end{itemize}
\end{lem}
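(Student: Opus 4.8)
The plan is to obtain the lemma as a direct specialization of the abstract Fr\'echet--Kolmogorov theorem for Banach function spaces, \cite[Theorem 3.1]{GuoZha2020}. The only genuine work is to check that $L^{s,t} = L^s(\R^{d_1}; L^t(\R^{d_2}))$ fits the framework of that theorem, and then to unwind its abstract conclusion into the three concrete bullets. First I would record the structural facts: for $s,t \in (1,\infty)$ the space $L^{s,t}$ is a Banach function space over $\R^d$ with the Fatou property and, crucially, with \emph{absolutely continuous norm}. This last property is exactly where the finiteness $s,t < \infty$ enters, and it follows from two applications of dominated convergence (first in the inner $L^t$-integral, then in the outer $L^s$-integral). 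I would also note that the norm is translation invariant, that $C^\infty_c(\R^d)$ is dense in $L^{s,t}$ (again using $s,t < \infty$), and hence that the translation group $\tau_h f := f(\cdot + h)$ acts strongly continuously on $L^{s,t}$: one writes $\tau_h f - f = \tau_h(f - g) + (\tau_h g - g) + (g - f)$ for $g \in C^\infty_c$ and uses $\|\tau_h\| = 1$ together with uniform continuity of $g$.

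With these facts in hand, \cite[Theorem 3.1]{GuoZha2020} applies and characterizes relative compactness of $\calF \subset L^{s,t}$ by equiboundedness, uniform smallness of tails $\|1_E f\|_{L^{s,t}}$ along an exhaustion of $\R^d$ by sets of finite measure, and uniform smallness of $\|\tau_h f - f\|_{L^{s,t}}$ as $h \to 0$. It then remains only to match these with the statement. Equiboundedness is literally the first bullet. For the second bullet, one specializes the exhaustion to $E = B(0,R)^c$; the equivalence of "along some/every exhaustion" with "along the balls" is again a consequence of the absolute continuity of the norm. The third bullet is the translation condition verbatim. Necessity of the three conditions is then automatic from the "only if" half of the cited theorem.

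The step I expect to require the most care is the verification that $L^{s,t}$ has absolutely continuous norm and that translations act strongly continuously on it; everything downstream is a mechanical specialization of a cited black box. If one instead wanted a self-contained argument avoiding \cite{GuoZha2020}, the direct route is standard: for necessity, relative compactness gives a finite $\varepsilon$-net $f_1, \dots, f_n$, and each of the three properties reduces to the single-function case, the third being precisely the strong continuity of translations noted above; for sufficiency, given the three conditions one mollifies each $f \in \calF$ by $f \ast \phi_\delta$ (equicontinuity, via Minkowski's integral inequality to move $\ast\,\phi_\delta$ inside the norms, makes this uniformly $L^{s,t}$-close to $f$), truncates to a ball $B(0,R)$ (equivanishing makes this uniformly close), and observes that $\{(f \ast \phi_\delta)\,1_{B(0,R)} : f \in \calF\}$ is uniformly bounded and uniformly equicontinuous in the sup-norm on $\overline{B(0,R)}$, hence relatively compact in $C(\overline{B(0,R)})$ by Arzel\`a--Ascoli and therefore in $L^{s,t}(B(0,R))$. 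In that self-contained version the only mildly technical point is the mixed-norm bookkeeping in the mollification estimate.
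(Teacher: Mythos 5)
Your proposal is correct and follows essentially the same route as the paper, which gives no proof beyond observing that the lemma is the specialization of the Banach-function-space Fr\'echet--Kolmogorov theorem \cite[Theorem 3.1]{GuoZha2020} to $L^{s,t}$. You simply supply more detail (absolute continuity of the mixed norm, strong continuity of translations, matching the tail condition to balls) than the paper records, plus an optional self-contained argument the paper does not attempt.
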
 

\begin{lem}
    Let $1 < p_i < q_i < \infty$, $b \in C(\R^d)$ and
    $T_i$ be a CZO for $i=1,2$. Then $[T_{1, C}, [b,T_{2,C}]]$ is compact
    $L^{p_1}L^{p_2} \to L^{q_1}L^{q_2}$.
\end{lem}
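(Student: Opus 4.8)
The plan is to observe that $S := [T_{1,C},[b,T_{2,C}]]$ is an integral operator with a \emph{bounded, continuous, compactly supported} kernel, and then to feed this into the Fr\'echet--Kolmogorov lemma above. First I would record the kernels of the truncations: since $G^i_{x_i,t^{-1}}(y_i) = 1-\varphi_i(t^{-1}(y_i-x_i))$ vanishes for $|x_i-y_i|\le t/2$, the function $y_i\mapsto L_t^i(y_i)G^i_{x_i,t^{-1}}(y_i)f(y_i)$ vanishes near $x_i$, so the kernel representation of $T_i$ applies and
\[
T_{i,C}f(x_i) = \int_{\R^{d_i}} K_{i,C}(x_i,y_i)f(y_i)\ud y_i,\qquad K_{i,C}(x_i,y_i) := L_t^i(x_i)\,K_i(x_i,y_i)\,L_t^i(y_i)\,G^i_{x_i,t^{-1}}(y_i).
\]
As noted in the text, $K_{i,C}$ is supported in the compact set $\{\,t/2\le|x_i-y_i|,\ |x_i|\le t^{-1},\ |y_i|\le t^{-1}\,\}$, on which $|K_i(x_i,y_i)|\le C(2/t)^{d_i}$; moreover $K_{i,C}$ vanishes identically near the diagonal and is a product of smooth cutoffs with $K_i$, which is continuous on $\{x_i\ne y_i\}$ by \eqref{eq:holcon}, so $K_{i,C}$ is continuous and compactly supported, hence uniformly continuous, on $\R^{d_i}\times\R^{d_i}$. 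A direct computation of the composition then gives, with $\Delta b$ as in \eqref{eq:deltaop},
\[
Sf(x) = -\int_{\R^d}\Theta(x,y)f(y)\ud y,\qquad \Theta(x,y) := K_{1,C}(x_1,y_1)\,K_{2,C}(x_2,y_2)\,\Delta b(x,y).
\]
Since $\Theta$ is supported where $|x_1|,|x_2|,|y_1|,|y_2|\le t^{-1}$, only the restriction of $b$ to a fixed compact set enters, and there $b\in C(\R^d)$ is bounded and uniformly continuous; hence $\Theta$ is bounded, uniformly continuous and compactly supported on $\R^d\times\R^d$. Fix a ball $B\subset\R^d$ containing both the $x$- and $y$-supports of $\Theta$ and let $\omega$ be a modulus of continuity of $\Theta$.

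Next I would verify the three conditions of the Fr\'echet--Kolmogorov lemma for the image $S\calF$ of a bounded family $\calF\subset L^{p_1,p_2}$. For $f\in\calF$, H\"older's inequality over $B$ gives $\int_B|f|\lesssim\|f\|_{L^{p_1,p_2}}\lesssim 1$ (here $p_i>1$ is used), so $\|Sf\|_{L^\infty}\le\|\Theta\|_{L^\infty}\int_B|f|\lesssim 1$; since $Sf$ is supported in $B$, this yields $\|Sf\|_{L^{q_1,q_2}}\lesssim\|Sf\|_{L^\infty}\lesssim 1$, i.e.\ equiboundedness, and equivanishing is immediate from the same support property. For equicontinuity, for $|h|\le1$ we have $|\Theta(x,y)-\Theta(x+h,y)|\le\omega(|h|)$ with this difference supported in $y\in B$, so $|Sf(x)-Sf(x+h)|\le\omega(|h|)\int_B|f|\lesssim\omega(|h|)$ uniformly in $f\in\calF$; as $Sf(\cdot)-Sf(\cdot+h)$ is supported in a fixed ball, $\sup_{f\in\calF}\|Sf(\cdot)-Sf(\cdot+h)\|_{L^{q_1,q_2}}\lesssim\omega(|h|)\to0$ as $h\to0$. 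The lemma then gives that $S\calF$ is relatively compact in $L^{q_1,q_2}$, which is the assertion.

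There is no genuine difficulty here; the one step that needs care is confirming that $\Theta$ is actually continuous, not merely bounded and compactly supported. This is precisely where the cutoff $G^i_{x_i,t^{-1}}$ earns its keep, excising the diagonal singularity of $K_i$, and where the continuity of $b$ is used, so that $\Theta=K_{1,C}\otimes K_{2,C}\cdot\Delta b$ is a product of uniformly continuous, compactly supported factors. I would also note that this argument uses only $p_i,q_i\in(1,\infty)$ rather than $p_i<q_i$: a bounded compactly supported kernel already maps $L^{p_1,p_2}$ boundedly into $L^\infty$ with uniformly compactly supported image, which is all that is needed.
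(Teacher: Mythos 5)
Your proposal is correct and follows essentially the same route as the paper: identify $[T_{1,C},[b,T_{2,C}]]$ as an integral operator whose kernel has the diagonal excised by $G_{x_i,t^{-1}}$ and the support confined by $L_t$, then verify equiboundedness, equivanishing and equicontinuity via the mixed-norm Fr\'echet--Kolmogorov lemma. The only (harmless) deviation is that where the paper dominates $|Pf|$ and the difference quotients by $I_{\beta_1}^1 I_{\beta_2}^2|f|$, using $p_i<q_i$, you instead use boundedness and compact support of the kernel together with H\"older to get an $L^\infty$ bound on a fixed compact set, which, as you note, even removes the need for $p_i<q_i$ in this particular lemma.
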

\begin{proof}
    All estimates below are allowed to depend on the parameter $t$
    that is implicit in the operators $T_{i, C}$.

    Write
    \begin{equation}\label{eq:capB}
        B(x,y) = b(x_1, x_2)-b(x_1, y_2)-b(y_1,x_2)+b(y_1, y_2)
    \end{equation}
    and
    $$
        Pf(x) := [T_{1, C}, [b, T_{2, C}]]f(x) = - \int_{\R^d} B(x,y)
        K_C(x,y) f(y) \ud y, 
    $$
    where
    \begin{align*}
        K_C(x,y) &:= L(x,y)G(x,y)K(x,y), \\
        L(x,y) &:= L_t(x_1)L_t(x_2)L_t(y_1)L_t(y_2), \\
        G(x,y) &:= G_{x_1, t^{-1}}(y_1) G_{x_2, t^{-1}}(y_2), \\
        K(x,y) &:= K_1(x_1,y_1)K_2(x_2,y_2).
    \end{align*}
    Notice (using the the fact that $b \in L^{\infty}_{\loc}$) that
    $$
        |Pf(x)| \lesssim \int_{|x_1-y_1| > t/2} \int_{|x_2-y_2| > t/2}
        \frac{|f(y)|}{|x_1-y_1|^{d_1}|x_2-y_2|^{d_2}} \ud y_2 \ud y_1
        \lesssim I_{\beta_1}^1 I_{\beta_2}^2 |f|(x_1, x_2),
    $$
    where this fractional integral bound, with the explicit choice
    $\frac{\beta_i}{d_i}:= \frac{1}{p_i} - \frac{1}{q_i}$,
    is a way to see
    that $P$ maps $L^{p_1}L^{p_2} \to L^{q_1}L^{q_2}$ boundedly.

    To see compactness, fix a sequence $(f_j)_j$ with $\|f_j\|_{L^{p_1}L^{p_2}} \lesssim 1$.
    We will show that $(Pf_j)_j$ satisfies the assumptions of Fr\'echet--Kolmogorov on
    the Banach function space $L^{q_1}L^{q_2}$. The equiboundedness is trivial from the boundedness of $P$,
    while the equivanishing follows as we have for large enough $M$ (independently on $j$) that 
    $$
        \| (x_1, x_2) \mapsto 1_{B(0, M)^c}(x_1, x_2) P f_j(x_1, x_2) \|_{L^{q_1}L^{q_2}} = 0.
    $$
    Indeed, $L_t^1(x_1)L_t^2(x_2) \ne 0$ requires $|x_i| < t^{-1}$ for $i=1,2$, and so
    $1_{B(0, M)^c}(x_1, x_2) = 0$ with big $M$.

    We now study the equicontinuity of $(Pf_j)_j$. Notice that
    \begin{align*}
    |Pf_j(x+h) - Pf_j(x)|
    \le \int_{\R^d} |A(x+h, y)-A(x,y)|  |f_j(y)| \ud y,
    \end{align*}
    where $A(x,y) := B(x,y)K_C(x,y)$. Notice that $A$ is uniformly continuous on compact sets,
    and so given $\varepsilon > 0$, we have for all small enough $h$ that
    $$
        |Pf_j(x+h) - Pf_j(x)| \le \varepsilon \int_{|x_1-y_1|
        \lesssim t^{-1}} \int_{|x_2-y_2| \lesssim t^{-1}} |f_j(y)| \ud y
        \lesssim \varepsilon  I_{\beta_1}^1 I_{\beta_2}^2 |f|(x_1, x_2),
    $$
    and so
    $$
        \|Pf_j(\cdot + h) - Pf_j\|_{L^{q_1}L^{q_2}}
        \lesssim \varepsilon \|f_j\|_{L^{p_1}L^{p_2}} \lesssim \varepsilon    
    $$
    uniformly on $f_j$. This shows compactness, and we are done.
\end{proof}

We are ready to prove the compactness of bi-commutators in the strictly fractional
case under the natural and optimal condition that $b$ belongs to the corresponding
rectangular fractional $\VMO$ space (necessity is proved later).
\begin{thm}\label{thm:qq>pp}
    Suppose $1 < p_i < q_i < \infty$ and $\beta_i/d_i = 1/p_i - 1/q_i$, $i=1,2$.
    If $b \in \VMO^{\beta_1, \beta_2}$ and $T_i$ is CZO for $i=1,2$, then
    $[T_1, [b,T_2]]$ is compact $L^{p_1}L^{p_2} \to L^{q_1}L^{q_2}$.
\end{thm}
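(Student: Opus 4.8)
The plan is to combine a density reduction with an explicit truncation of the kernel; the point is that in the off-diagonal regime $p_i<q_i$ the truncated pieces are dominated by small multiples of fractional integrals, which gives both $L^{p_1}L^{p_2}\to L^{q_1}L^{q_2}$ boundedness and a gaining power of the truncation parameter.

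First I would reduce to $b\in C^{\infty}_c$. We may assume $\beta_i\in(0,1)$: if $\beta_i\ge1$ for some $i$, then membership in $\VMO^{\beta_1,\beta_2}=\dot{\op{VC}}^{0,\beta_1,\beta_2}$ (Lemma~\ref{lem:VMO_VC_pure_frac}) forces the second difference $B(x,y)$ of \eqref{eq:capB} to vanish identically, so the bi-commutator is the zero operator. For $\beta_i\in(0,1)$, Lemma~\ref{lem:VMO_VC_pure_frac} together with Theorem~\ref{thm:approx:VCVC} gives $\VMO^{\beta_1,\beta_2}=\overline{C^{\infty}_c(\R^d)}^{\dot C^{0,\beta_1,\beta_2}(\R^d)}$, so we may pick $b_j\in C^{\infty}_c$ with $\|b-b_j\|_{\dot C^{0,\beta_1,\beta_2}}\to0$. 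The off-diagonal bound $\|[T_1,[b,T_2]]\|_{L^{p_1}L^{p_2}\to L^{q_1}L^{q_2}}\lesssim\|b\|_{\dot C^{0,\beta_1,\beta_2}}$ of \cite{AHLMO2021}, applied to $b-b_j$, then shows $[T_1,[b_j,T_2]]\to[T_1,[b,T_2]]$ in operator norm. Since the compact operators form a closed subspace of the bounded operators $L^{p_1}L^{p_2}\to L^{q_1}L^{q_2}$, it suffices to treat $b\in C^{\infty}_c$.

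For such a $b$ I would fix $t\in(0,1)$ and write $T_i=T_{i,C}+T_{i,E}$ as above. By \eqref{eq:capB} the operator $[T_1,[b,T_2]]$ has kernel $-B(x,y)K_1(x_1,y_1)K_2(x_2,y_2)$, so decomposing $K_i=K_{i,C}+K_{i,E}$ and using $K_1K_2-K_{1,C}K_{2,C}=K_{1,E}K_2+K_{1,C}K_{2,E}$ one splits $[T_1,[b,T_2]]=[T_{1,C},[b,T_{2,C}]]+R_t$, where $R_t$ has kernel $-B(x,y)\bigl(K_{1,E}(x_1,y_1)K_2(x_2,y_2)+K_{1,C}(x_1,y_1)K_{2,E}(x_2,y_2)\bigr)$. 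The first summand is compact for each $t$ by the lemma just proved (which needs only $b\in C(\R^d)$), so it remains to show $\|R_t\|_{L^{p_1}L^{p_2}\to L^{q_1}L^{q_2}}\to0$ as $t\to0^+$; then $[T_1,[b,T_2]]$ is an operator-norm limit of compact operators, hence compact. Here I would use $|B(x,y)|\lesssim_b\min(1,|x_1-y_1|)\min(1,|x_2-y_2|)$ (proved for $b\in C^{\infty}_c$ in the ``$\supset$'' part of Theorem~\ref{thm:approx:VCVC}) and the support fact that $B(x,y)=0$ unless $\min(|x_1|,|y_1|)\lesssim_b1$ and $\min(|x_2|,|y_2|)\lesssim_b1$, together with $|K_{i,E}(x_i,y_i)|\lesssim|x_i-y_i|^{-d_i}\bigl(1_{|x_i-y_i|<t}+1_{\{|x_i|>t^{-1}/2\,\textup{or}\,|y_i|>t^{-1}/2\}}\bigr)$ and $|K_{i,C}(x_i,y_i)|\lesssim|x_i-y_i|^{-d_i}$. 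On the near-diagonal part of $K_{i,E}$, combining $\min(1,|x_i-y_i|)=|x_i-y_i|$ with $|K_{i,E}|$ produces the convolution kernel $|z|^{1-d_i}1_{|z|<t}$, which lies in $L^{d_i/(d_i-\beta_i)}(\R^{d_i})$ with norm $\sim t^{1-\beta_i}$ precisely because $\beta_i<1$; on the far part, the support restriction on $B$ forces $|x_i-y_i|\gtrsim t^{-1}$, giving the kernel $|z|^{-d_i}1_{|z|\gtrsim t^{-1}}\in L^{d_i/(d_i-\beta_i)}(\R^{d_i})$ with norm $\sim t^{\beta_i}$. In the remaining (``full'') variable the relevant factor of $B$ together with $|K_j|$ or $|K_{j,C}|$ is dominated by the Riesz kernel $|z|^{\beta_j-d_j}$, i.e.\ by the fractional integral $I^j_{\beta_j}$, bounded $L^{p_j}\to L^{q_j}$ since $\beta_j/d_j=1/p_j-1/q_j$. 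Combining these by Young's inequality in the truncated variable, Minkowski's integral inequality, and the fractional integral bound in the other variable, each of the four pieces of $R_t$ (near or far, in parameter $1$ or $2$) is bounded by $\lesssim_b t^{\min(1-\beta_1,\beta_1,1-\beta_2,\beta_2)}\to0$.

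The one place needing care is this last step: bookkeeping the four truncated pieces and, in particular, checking that the ``far'' pieces — supported on unbounded regions — still map $L^{p_1}L^{p_2}\to L^{q_1}L^{q_2}$ with a surplus power of $t$. It is exactly the two hypotheses $\beta_i/d_i=1/p_i-1/q_i$ and $\beta_i<1$ that make every piece gain such a power; everything else is supplied by the density reduction and by the already-proved compactness of the fully truncated bi-commutator $[T_{1,C},[b,T_{2,C}]]$.
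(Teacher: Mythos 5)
Your proposal is correct in outline, and the quantitative remainder estimate does check out, but it takes a genuinely different route from the paper's. Both arguments share the same backbone: the truncation $T_i=T_{i,C}+T_{i,E}$, the Fr\'echet--Kolmogorov compactness of $[T_{1,C},[b,T_{2,C}]]$ for merely continuous $b$, and the conclusion via operator-norm limits of compact operators. The difference is in how the remainder is killed. The paper never reduces to smooth symbols: it keeps $b\in\VMO^{\beta_1,\beta_2}$ and uses the pointwise $\dot{\op{VC}}^{0,\beta_1,\beta_2}$ characterization (Lemma \ref{lem:VMO_VC_pure_frac}) to observe that every error term $[P_1,[b,P_2]]$ with some $P_i\ne T_{i,C}$ has its kernel supported precisely in the region where the second difference is at most $\varepsilon|x_1-y_1|^{\beta_1}|x_2-y_2|^{\beta_2}$, hence is dominated by $\varepsilon\, I^1_{\beta_1}I^2_{\beta_2}$. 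You instead first pass to $b\in C^\infty_c$ via Theorem \ref{thm:approx:VCVC} together with the $\dot C^{0,\beta_1,\beta_2}$ upper bound of \cite{AHLMO2021}, and then beat the remainder with explicit powers $t^{\min(\beta_i,1-\beta_i)}$ coming from $|B(x,y)|\lesssim_b\min(1,|x_1-y_1|)\min(1,|x_2-y_2|)$ and the compact support of $b$. What each buys: the paper's version is leaner at this point (it needs neither the density theorem nor the boundedness theorem of \cite{AHLMO2021}, only the $\VC$ characterization and the elementary fractional-integral domination) and it handles $\beta_i\ge 1$ with no case distinction; your version imports more machinery (the density theorem forces $\beta_i<1$, hence your extra case, plus the external upper bound) but in exchange the error analysis is fully explicit and quantitative.

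Two small points to tighten. First, for $\beta_i\ge 1$ you assert that membership in $\dot{\op{VC}}^{0,\beta_1,\beta_2}$ forces $B\equiv 0$; this is true, but for $\beta_i=1$ it does not follow from the H\"older bound alone --- you need the vanishing condition together with a chaining argument along a segment subdivided into pieces of length less than $t$ (exactly the telescoping used in the paper's final subsection), so add a line of justification. Second, your identity $[T_1,[b,T_2]]=[T_{1,C},[b,T_{2,C}]]+R_t$ is a decomposition at the level of kernels, so it presupposes that the full bi-commutator is given a.e.\ by the absolutely convergent integral against $-B(x,y)K_1(x_1,y_1)K_2(x_2,y_2)$, including on the diagonal; this is standard for bi-H\"older symbols (and the paper's own kernel formulas for the error terms $[P_1,[b,P_2]]$ rest on the same implicit fact), but it merits a sentence.
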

\begin{proof}
    With $t < 1$ fixed write $T_i = T_{i,C} + T_{i,E}$ as previously. We consider
    the commutators $[P_1, [b,P_2]]$, where $P_i \in \{T_{i, C}, T_{i, E}\}$
    and $P_i \ne T_{i, C}$ for at least one $i = 1,2$. Write
    $$
    [P_1, [b,P_2]]f(x) = -\int_{\R^d} B(x,y)K(x,y)f(y) \ud y,
    $$
    where $B$ is as in \eqref{eq:capB} and $K$ is some appropriate kernel
    determined by the kernels $K_1$, $K_2$ and the auxiliary functions present
    in $P_i$. If $P_i \ne T_{i, C}$ this implies that in the above integral
    $|x_i-y_i| < t$ or $|y_i| > t^{-1}/2$ or $|x_i| > t^{-1}/2$.
    This means that given $\varepsilon > 0$, using Lemma \ref{lem:VMO_VC_pure_frac},
    we have for a small enough $t$ that
    $$
        |[P_1, [b,P_2]]f(x)| \le \varepsilon I^{\beta_1} I^{\beta_2} |f|(x_1, x_2)
    $$
    and so
    $$
        \|[P_1, [b,P_2]]f \|_{L^{q_1}L^{q_2}} \lesssim \varepsilon \|f\|_{L^{p_1}L^{p_2}}.
    $$
    This shows that
    $$
        \|[T_1, [b,T_2]]-[T_{1, C}, [b,T_{2,C}]]\|_{L^{p_1}L^{p_2} \to L^{q_1}L^{q_2}} \lesssim \varepsilon
    $$
    for small $t$. Since the operators $[T_{1, C}, [b,T_{2,C}]]$ are compact, and compact operators
    form a closed subspace of bounded operators, it follows that also $[T_1, [b,T_2]]$
    is compact as an operator $L^{p_1}L^{p_2} \to L^{q_1}L^{q_2}$.
\end{proof}

\subsection{Compactness extrapolation principle for bi-commutators}
In this section we prove necessary conditions for the compactness of the bi-commutator
in many of the remaining cases. It is based on an extrapolation scheme.

First, we need some notation (familiar from the Introduction).
Given  $p_i, q_i \in (1, \infty),$ let $\beta_i,r_i$ be defined through the relations
\begin{equation*}
\begin{split}
  \beta_i := d_i\left( \frac{1}{p_i}-\frac{1}{q_i}\right), \quad \text{if}\quad p_i<q_i;\qquad
  \frac{1}{q_i} :=\frac{1}{r_i}+\frac{1}{p_i},\quad \text{if}\quad p_i>q_i.
\end{split}
\end{equation*}
Let $T_1$ and $T_2$ be two symmetrically non-degenerate CZOs on $\R^{d_1}$ and $\R^{d_2}$,
respectively, and $b \in L^2_{\loc}(\R^{d})$, $d=d_1+d_2$. 
Only the lower bounds require the CZOs to be non-degenerate -- and the lower bounds are not important
in this section, so we postpone the definition.
The main result of \cite{AHLMO2021}
tells us that $\|[T_1, [b, T_2]]\|_{L^{p_1}L^{p_2}\to L^{q_1}L^{q_2}}$
has upper and lower bounds according to the following table:
\begin{center}
    \begin{tabular}{ c | c | c | c }
        & $p_1<q_1$ & $p_1=q_1$ & $p_1>q_1$ \\ \hline
        & & & \\
        $p_2<q_2$ &  $\sim\|b\|_{\dot C^{0,\beta_1}_{x_1}(\dot C^{0,\beta_2}_{x_2})}$
                  &  $\sim\|b\|_{\dot C^{0,\beta_2}_{x_2}(\BMO_{x_1})}$	
                  & $\lesssim\|b\|_{\dot L^{r_1}_{x_1}(\dot C^{0,\beta_2}_{x_2})}$ \\
                  & & &  $\gtrsim\|b\|_{\dot C^{0,\beta_2}_{x_2}(\dot L^{r_1}_{x_1})}$ \\ \hline
                  & &  & \\
        $p_2=q_2$ & $\sim\|b\|_{\dot C^{0,\beta_1}_{x_1}(\BMO_{x_2})}$
                  & $\lesssim \|b\|_{\BMO_{\rm prod}}$
                  &  $\lesssim\|b\|_{\dot L^{r_1}_{x_1}(\BMO_{x_2})}$ \\
                  & & $\gtrsim \|b\|_{\BMO_{p_1', p_2'}}$ & \\ \hline
                  & & & \\
        $p_2>q_2$ & $\sim\| b\|_{\dot C^{0,\beta_1}_{x_1}(\dot L^{r_2}_{x_2})}$
                  & $\lesssim\| b\|_{\BMO_{x_1}(\dot L^{r_2}_{x_2})}$
                  & $\lesssim\| b\|_{\dot L^{r_1}_{x_1}(\dot L^{r_2}_{x_2})}$	\\
    \end{tabular}
\end{center}
Of course, the result is only fully satisfactory when a $\sim$ holds.
Also, we do not usually write the $x_1$ and $x_2$ subscripts but they are included in this table
for extra clarity. 

Let $\vec{p} := (p_1,p_2)$ and $\vec{q} := (q_1,q_2)$. We define the
space $X^{\vec{p},\vec{q}}$ to be the space from the above
table for which the upper bound holds:
\begin{align}\label{eq:bound:Xpq}
    \|[T_1,[b,T_2]] \|_{L^{p_1,p_2}\to L^{q_1,q_2}}
    \lesssim \| b \|_{X^{\vec{p},\vec{q}}}, \qquad 1 < p_i, q_i < \infty.
\end{align}

When $\beta_i > 1$ we only get constant functions in that parameter -- our extrapolation
method is restricted so that it only deals with $X^{\vec{p},\vec{q}}$ that are non-trivial.
\begin{thm}
    Let $p_i, q_i \in (1, \infty)$ be some fixed exponents so that $\beta_i < 1$, $i=1,2$. 
    Let $Z$ be some collection of bounded and compactly supported functions in $\R^d$ with
    $$
    Z \subset X^{\vec{p},\vec{q}} \cap \VMO^{\alpha_1, \alpha_2}
    $$
    for all $\alpha_1, \alpha_2 \in (0, 1)$. Then for CZOs $T_i$, $i = 1,2$, we have
    that $[T_1,[b,T_2]]$ is compact $L^{p_1}L^{p_2} \to L^{q_1}L^{q_2}$ 
    for all
    $$
        b \in Y^{\vec{p},\vec{q}}
        = Y^{\vec{p},\vec{q}}(\R^d)
        := \overline{Z}^{X^{\vec{p},\vec{q}}}.
    $$
\end{thm}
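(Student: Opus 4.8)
The plan is to reduce to the strictly fractional case, Theorem~\ref{thm:qq>pp}, by an interpolation-of-compactness argument. First, the compact operators form a closed subspace of the bounded operators $L^{\vec p}\to L^{\vec q}$, and by \eqref{eq:bound:Xpq} the assignment $g\mapsto[T_1,[g,T_2]]$ is a bounded linear map from $X^{\vec p,\vec q}$ into these bounded operators. Hence, given $b\in Y^{\vec p,\vec q}$ and $g_n\in Z$ with $g_n\to b$ in $X^{\vec p,\vec q}$, we get $[T_1,[g_n,T_2]]\to[T_1,[b,T_2]]$ in operator norm, and it suffices to prove that $[T_1,[g,T_2]]$ is compact $L^{\vec p}\to L^{\vec q}$ for each fixed $g\in Z$.

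Fix such a $g$, and set $\beta_i:=d_i(1/p_i-1/q_i)$ if $p_i<q_i$ and $\beta_i:=0$ if $p_i=q_i$, so $\beta_i\in[0,1)$ by hypothesis. I will exhibit $(L^{\vec p},L^{\vec q})$ as an intermediate couple lying between a couple on which $[T_1,[g,T_2]]$ is compact and one on which it is merely bounded. Concretely, for any sufficiently small $\theta\in(0,1)$ there are exponent vectors $\vec a,\vec c,\vec s,\vec t$, each with both coordinates in $(1,\infty)$, such that (i) $a_i<c_i$ with $\gamma_i:=d_i(1/a_i-1/c_i)\in(0,1)$; (ii) $s_i>t_i$; and (iii) $1/p_i=(1-\theta)/a_i+\theta/s_i$ and $1/q_i=(1-\theta)/c_i+\theta/t_i$, for $i=1,2$. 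For instance one may fix $\theta$ small, let $\gamma_i$ solve $(1-\theta)\gamma_i=\beta_i+\theta d_i/2$ (so $\gamma_i\in(0,1)$ for $\theta$ small, since $\beta_i<1$), and set $1/s_i=1/4$, $1/t_i=3/4$, $1/a_i=(1/p_i-\theta/4)/(1-\theta)$, $1/c_i=1/a_i-\gamma_i/d_i$; a direct computation verifies (i)--(iii) and that all exponents lie in $(1,\infty)$ once $\theta$ is small. (If $p_i<q_i$ for both $i$, no interpolation is needed -- one simply applies Theorem~\ref{thm:qq>pp} directly, since then $g\in\VMO^{\beta_1,\beta_2}$ with $\beta_i\in(0,1)$. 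The construction is needed precisely when some $\beta_i$ vanishes.)

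With these exponents in hand: by (i) and the hypothesis $Z\subset\VMO^{\gamma_1,\gamma_2}$ (valid as $\gamma_1,\gamma_2\in(0,1)$), Theorem~\ref{thm:qq>pp} shows $[T_1,[g,T_2]]$ is compact $L^{\vec a}\to L^{\vec c}$. By (ii), the space governing \eqref{eq:bound:Xpq} for the exponents $(\vec s,\vec t)$ is the iterated homogeneous $\dot L^r$-type space; since $g\in L^\infty$ has compact support it lies in the corresponding (non-homogeneous, a fortiori homogeneous) iterated Lebesgue space, so $[T_1,[g,T_2]]$ is bounded $L^{\vec s}\to L^{\vec t}$. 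By (iii), complex interpolation of mixed-norm Lebesgue spaces gives $[L^{\vec a},L^{\vec s}]_\theta=L^{\vec p}$ and $[L^{\vec c},L^{\vec t}]_\theta=L^{\vec q}$. Since $[T_1,[g,T_2]]$ is a bounded morphism of the couple $(L^{\vec a},L^{\vec s})$ into the couple $(L^{\vec c},L^{\vec t})$ and is compact at the endpoint $L^{\vec a}\to L^{\vec c}$, the interpolation theorem for compact operators yields compactness at the intermediate point $L^{\vec p}\to L^{\vec q}$. With the reduction of the first paragraph this proves the theorem.

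The step demanding the most care is the invocation of interpolation of compact operators for mixed-norm Lebesgue couples, where one relies on Cwikel-type results in the spirit of the one-parameter compactness principles recorded in \cite{Oik24}. A secondary point is that a single $\theta$ must serve both indices $i=1,2$ simultaneously, which is why the off-diagonal gaps $\gamma_i$ at the compact endpoint are chosen $\theta$-dependent rather than fixed; with that dependence the exponent bookkeeping is routine.
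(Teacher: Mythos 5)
Your overall scheme is genuinely different from the paper's. The reduction in your first paragraph (density in $X^{\vec p,\vec q}$ plus closedness of the compacts, via \eqref{eq:bound:Xpq}) is exactly the paper's first step, and your exponent bookkeeping checks out: with $1/s_i=1/4$, $1/t_i=3/4$, $1/a_i=(1/p_i-\theta/4)/(1-\theta)$, $1/c_i=1/a_i-\gamma_i/d_i$ and $(1-\theta)\gamma_i=\beta_i+\theta d_i/2$ one indeed gets $\gamma_i\in(0,1)$ for small $\theta$ and the convexity relations (iii), so the endpoint $L^{\vec a}\to L^{\vec c}$ is covered by Theorem \ref{thm:qq>pp} and the endpoint $L^{\vec s}\to L^{\vec t}$ by the $\dot L^{r_1}(\dot L^{r_2})$ upper bound of \cite{AHLMO2021}, since a bounded compactly supported $g$ lies in that space. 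The problem is the step you yourself flag: one-sided interpolation of compactness for the \emph{complex} method is a famously delicate matter (open in full generality), and you cannot bypass it with the real method here, because real interpolation of mixed-norm couples $(L^{a_1}(L^{a_2}),L^{s_1}(L^{s_2}))$ does not return mixed-norm Lebesgue spaces (Lions--Peetre gives $L^{p_1}$ of a Lorentz space in the inner variable), so Cwikel's general real-method theorem does not apply to your diagram. What you need is a complex-method result valid for couples of Banach function lattices (Cwikel--Kalton type; mixed-norm Lebesgue spaces do qualify), and the reference you give, \cite{Oik24}, is a note on one-parameter extrapolation principles, not a source for such an interpolation theorem. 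As written, the pivotal step is therefore asserted rather than justified; the argument is repairable by citing and verifying the hypotheses of an appropriate Cwikel--Kalton-type theorem, but that verification is the actual content of your proof and is missing.

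For comparison, the paper avoids interpolation altogether. Since $b\in Z$ is supported in a rectangle $I\times J$, it splits $T_1$ and $T_2$ by $1_I,1_{I^c}$ and $1_J,1_{J^c}$; the terms with $1_{I^c}T_11_{I^c}$ or $1_{J^c}T_21_{J^c}$ vanish, and each of the remaining nine terms is written as $1_{A}\circ[T_1,[b,T_2]]\circ 1_{B}$ and factored as (bounded) $\circ$ (compact) $\circ$ (bounded): multiplication by an indicator that localizes a variable to a finite-measure set lowers the input exponent or absorbs a raised output exponent by H\"older, and the middle bi-commutator is taken at auxiliary strictly off-diagonal exponents $p_1^-<q_1$, $p_2<q_2^+$ with fractional indices in $(0,1)$ (here $\beta_i<1$ is used), where Theorem \ref{thm:qq>pp} applies because $Z\subset\VMO^{\alpha_1,\alpha_2}$. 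This buys an elementary, self-contained proof at the cost of a case check over the nine terms; your route is shorter on paper but outsources the hard work to a nontrivial interpolation theorem that must be precisely invoked.
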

\begin{proof}
    Fix arbitrary exponents $1 < p_i, q_i < \infty$ with $\beta_i < 1$
    and $b\in Y^{\vec{p},\vec{q}}$. By the definition
    of $Y^{\vec{p},\vec{q}}$ we find $b_i \in Z$ so that
    $$
        \Norm{b-b_i}{X^{\vec{p},\vec{q}}}< 1/i.
    $$
    Then, by \eqref{eq:bound:Xpq} we have
    \begin{align*}
        \bNorm{[T_1,[b,T_2]] - [T_1,[b_i,T_2]]}{L^{p_1,p_2}\to L^{q_1,q_2}}
        &= \bNorm{[T_1,[b-b_i,T_2]]}{L^{p_1,p_2}\to L^{q_1,q_2}} \\
        &\lesssim \Norm{b-b_i}{X^{\vec{p},\vec{q}}} < 1/i.
    \end{align*}
    As compact operators form a closed subspace of bounded operators, it is enough to show that each 
    $[T_1,[b_i,T_2]]$ is compact $L^{p_1}L^{p_2} \to L^{q_1}L^{q_2}$.
    So without loss of generality, we may assume that $b \in Z$.

    Now, choose cubes $I \subset \R^{d_1}$ and $J \subset \R^{d_2}$ so that
    $\supp b\subset I\times J$.
    We split
    \begin{align}\label{eq:splitT1}
        T_1 = 1_IT_11_I + 1_IT_11_{I^c}+1_{I^c}T_11_I+1_{I^c}T_11_{I^c},
    \end{align}
    \begin{align}\label{eq:splitT2}
        T_2 = 1_JT_21_J + 1_JT_21_{J^c}+1_{J^c}T_21_J+1_{J^c}T_21_{J^c}.
    \end{align}
    The bi-commutator is linear in all entries, thus splitting
    it according to \eqref{eq:splitT1} and \eqref{eq:splitT2} we obtain in total
    $4\times 4 = 16$ terms. The terms that contain
    $1_{I^c}T_11_{I^c}$ or $1_{J^c}T_21_{J^c}$ vanish by the support
    condition of $b$.
    This leaves us with $3\times 3 =9$ terms that we must show to be compact.
    A representative case is the following,
    \begin{align}\label{eq:repr}
        \left[1_{I^c}T_11_I, [b , 1_JT_21_{J^c}]\right] 
        = 1_{I^c\times J}\circ \left[T_1,[b, T_2]\right]\circ1_{I\times J^c},
    \end{align}
    with the other cases being completely analogous. Here the indicator is seen as pointwise multiplication.

    We show that the operator \eqref{eq:repr} is compact by viewing
    it as a composition of two bounded and one compact mapping between suitable spaces.
    We can pick auxiliary exponents
    $p_1^-$ and $q_2^+$ so that
    \begin{align*}
        1 < p_1^- < \min(p_1, q_1) \qquad \textup{and} \qquad \max(p_2, q_2) < q_2^+ < \infty.  
    \end{align*}
    Moreover, we pick them so that
    $$
        \alpha_1 := d_1\Big(\frac{1}{p_1^-}-\frac{1}{q_1}\Big) \in (0,1),\qquad 
        \alpha_2 := d_2\Big(\frac{1}{p_2}-\frac{1}{q_2^+}\Big) \in (0,1).
    $$
    We explain why this is possible. First, regarding $\alpha_1$, consider the
    case $p_1 \ge q_1$. Then we can pick $p_1^- < q_1$ very close to $q_1$
    so that $\alpha_1 < 1$. In the case $p_1 < q_1$ with $\beta_1 < 1$ we can choose
    $p_1^- < p_1$ so close to to $p_1$ that still $\alpha_1 < 1$ (using that $\beta_1 < 1$).
    Regarding $q_2^+$ consider the case $p_2 \ge q_2$. Then pick $q_2^+ > p_2$
    so close to $p_2$ that $\alpha_2 < 1$. Finally, if $p_2 < q_2$ with $\beta_2 < 1$
    choose $q_2^+ > q_2$ so close to $q_2$ that still $\alpha_2 < 1$ (using $\beta_2 < 1$).

    Now, take a look at the following diagram,
    \begin{align}
        L^{p_1,p_2} \overset{1_{I\times J^c}}{\longrightarrow } 
        L^{p_1^-,p_2} \overset{\left[T_1,[b,T_2]\right]}{\longrightarrow }
        L^{q_1,q_2^+}  \overset{1_{I^c\times J}}{\longrightarrow } L^{q_1,q_2}.
    \end{align}
    Directly by H\"{o}lders inequality, the first and the last maps are bounded.
    The middle is compact by Theorem \ref{thm:qq>pp} as $b \in Z \subset \VMO^{\alpha_1, \alpha_2}$.
    Thus, the composition is compact.
\end{proof}

We now derive the compactness for the other exponent ranges.
\begin{itemize}
    \item Consider the diagonal $p_1 = q_1$ and $p_2 = q_2$. Then $X^{\vec p, \vec q}
        = \BMO_{\operatorname{prod}}$ and we have $Z := C^{\infty}_c \subset \BMO_{\operatorname{prod}}
        \cap \VMO^{\alpha_1, \alpha_2}$ (by the easy side of Theorem \ref{thm:approx:VCVC} and
        the discussion using litle $\bmo$ showing the same inclusion for the product $\BMO$)
        for all $\alpha_1, \alpha_2 \in (1, \infty)$.
        We can conclude that $[T_1, [b, T_2]]$ is compact, by the extrapolation theorem, for all
        $$
        b \in \overline{C^{\infty}_c}^{\BMO_{\operatorname{prod}}} = \VMO_{\operatorname{prod}}.
        $$
        Here the last equality is just by definition.
    \item Suppose $p_1 < q_1$ but $p_2 = q_2$. Now, if $\beta_1 \ge 1$, our necessity proofs
        will show that compactness requires that $b$ is such that the bi-commutator vanishes
        (a sum of two functions -- one constant in $x_1$, the other in $x_2$). So conversely,
        if $b$ is such, the bi-commutator vanishes and clearly it is compact.
        So we may assume $\beta_1 < 1$. Then $X^{\vec p, \vec q} = C^{\beta_1}(\BMO) =
        \BMO^{\beta_1, 0}$ (by Theorem \ref{thm:CBMO}) and we have $Z := C^{\infty}_c \subset 
        \BMO^{\beta_1, 0}
        \cap \VMO^{\alpha_1, \alpha_2}$ for all $0 < \alpha_1, \alpha_2 < 1$,
        and that, by the extrapolation theorem, $[T_1, [b, T_2]]$ is compact for all
        $$
            b \in \overline{C^{\infty}_c}^{\BMO^{\beta_1,0}}
            = \VMO^{\beta_1,0},
        $$
        where we used Theorem \ref{thm:approx:VCVMO} for the last equality.
    \item The case $p_1 = q_1$ and $p_2 < q_2$ is symmetric.
    \item The case $p_1 < q_1$ and $p_2 < q_2$ we already proved directly.
\end{itemize}
This ends our proof of the sufficiency claims of Theorem \ref{thm:main}.

\section{Necessary conditions for compactness}\label{sect:NecesaryConditions}
In this section we prove the corresponding necessary conditions for compactness
of bi-commutators. However,
we must start with some preliminary results and definitions.

The one-parameter version of Lemma \ref{lem:seq} is implicit in Uchiyama \cite{Uch1978}.
This one-parameter version was also later explicitly stated in
\cite[Lemma 5.21.]{HOS2023}.
\begin{lem}\label{lem:seq}
	Let $U:L^{p_1,p_2}\to L^{q_1,q_2}$ be a bounded linear operator
    with given $1<p_i,q_i<\infty.$ Then, there does \textbf{not} exist a
    sequence $\{u_k\}_{k=1}^\infty\subset L^{p_1,p_2}$ with the following properties:
	\begin{enumerate}[$(i)$]
		\item $\sup_{k\in\N}\|u_k\|_{L^{p_1,p_2}}\lesssim 1$; 
		\item for  $j=1$ \textbf{or} $j=2,$ there holds that 
		\begin{align*}
			\pi_j(\supp u_k) \cap \pi_j(\supp u_m) = \emptyset,\qquad k\not=m,
		\end{align*}
        where $\pi_j$ is the projection $\R^d \to \R^{d_j}$;
		\item there exists a non-trivial $\Phi\in L^{q_1,q_2}$ so that
            $$
                Uu_k \to \Phi
            $$
            in $L^{q_1, q_2}$.
	\end{enumerate} 
\end{lem}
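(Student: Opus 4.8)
The plan is a proof by contradiction in the style of Uchiyama: assume such a sequence $\{u_k\}$ exists, test $U$ against the finite partial sums $\sum_{k} u_k$, and observe that condition $(iii)$ forces $\|U(\sum_k u_k)\|_{L^{q_1,q_2}}$ to grow linearly in the number of summands, while conditions $(i)$, $(ii)$ and the boundedness of $U$ force it to grow only sublinearly.

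The key step is a norm estimate for sums with supports disjoint in one of the projections. Suppose first that $(ii)$ holds with $j=1$, so that the sets $A_k := \pi_1(\supp u_k)\subset\R^{d_1}$ are pairwise disjoint. Then for each $x_1$ at most one of the functions $u_k(x_1,\cdot)$ is non-zero, so that for every $N$
\begin{align*}
    \Big\| \sum_{k=1}^{N} u_k \Big\|_{L^{p_1,p_2}}^{p_1}
    &= \int_{\R^{d_1}} \Big\| \sum_{k=1}^{N} u_k(x_1,\cdot) \Big\|_{L^{p_2}}^{p_1} \ud x_1
    = \sum_{k=1}^{N} \int_{A_k} \| u_k(x_1,\cdot) \|_{L^{p_2}}^{p_1} \ud x_1 \\
    &= \sum_{k=1}^{N} \| u_k \|_{L^{p_1,p_2}}^{p_1} \lesssim N
\end{align*}
by $(i)$; hence $\| \sum_{k=1}^{N} u_k \|_{L^{p_1,p_2}} \lesssim N^{1/p_1}$. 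If instead $(ii)$ holds with $j=2$, then for each fixed $x_1$ the functions $u_k(x_1,\cdot)$ have pairwise disjoint supports in $x_2$, so $\| \sum_k u_k(x_1,\cdot) \|_{L^{p_2}}^{p_2} = \sum_k \| u_k(x_1,\cdot) \|_{L^{p_2}}^{p_2} =: \sum_k a_k(x_1)$; taking the $L^{p_1}_{x_1}$-norm and using the triangle inequality in $L^{p_1/p_2}_{x_1}$ when $p_1\ge p_2$, or the elementary bound $(\sum_k a_k)^{p_1/p_2}\le\sum_k a_k^{p_1/p_2}$ when $p_1<p_2$, one again obtains $\| \sum_{k=1}^{N} u_k \|_{L^{p_1,p_2}} \lesssim N^{\theta}$. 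In every case there is $\theta = \theta(\vec p) := \max(1/p_1,1/p_2)\in(0,1)$ (using $p_i>1$) with $\| \sum_{k=1}^{N} u_k \|_{L^{p_1,p_2}} \lesssim N^{\theta}$ for all $N$, and the same holds for any shift $\{u_{k}\}_{k>K_0}$ of the sequence, which still satisfies $(i)$ and $(ii)$.

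Now I would bring in $(iii)$. Set $c := \| \Phi \|_{L^{q_1,q_2}} > 0$ and pick $K_0$ with $\| U u_k - \Phi \|_{L^{q_1,q_2}} \le c/2$ for all $k>K_0$. Then, by the triangle inequality in $L^{q_1,q_2}$ applied to the indices $k = K_0+1,\dots,K_0+N$, together with the linearity and boundedness of $U$ and the key estimate,
\[
    \frac{Nc}{2} \le \Big\| \sum_{k=K_0+1}^{K_0+N} U u_k \Big\|_{L^{q_1,q_2}}
    = \Big\| U\Big( \sum_{k=K_0+1}^{K_0+N} u_k \Big) \Big\|_{L^{q_1,q_2}}
    \lesssim \Big\| \sum_{k=K_0+1}^{K_0+N} u_k \Big\|_{L^{p_1,p_2}} \lesssim N^{\theta}.
\]
This gives $N^{1-\theta} \lesssim 1$ uniformly in $N\in\N$, which is absurd since $1-\theta>0$. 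The contradiction shows that no such sequence $\{u_k\}$ can exist.

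I do not anticipate a serious obstacle; the only point needing a little care is the disjoint-support norm computation in the case $j=2$ with $p_1<p_2$, where one must use the subadditivity $(\sum_k a_k)^{r}\le\sum_k a_k^{r}$ for $0<r\le 1$ in place of Minkowski's inequality, and the bookkeeping of which exponent ($1/p_1$ versus $1/p_2$) shows up in $\theta$.
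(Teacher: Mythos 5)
Your proof is correct and follows essentially the same Uchiyama-style route as the paper: sum the $u_k$, use the disjointness of the supports in one projection to bound the mixed norm of the sum, and contradict the linear growth forced by $Uu_k \to \Phi \neq 0$ together with the boundedness of $U$. The only cosmetic difference is that the paper weights the sum by a sequence in $\ell^s \setminus \ell^1$ with $s < \min(p_1,p_2)$ to keep $\|\sum_k a_k u_k\|_{L^{p_1,p_2}}$ uniformly bounded, whereas you use unweighted partial sums and compare the growth rates $N^{\max(1/p_1,1/p_2)}$ versus $N$; both rest on the same disjoint-support norm computation.
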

\begin{proof}
    Aiming for a contradiction, suppose such a sequence $(u_k)$ exists.
	By the condition $(iii)$, passing to a subsequence if necessary, 
    we can assume $\Norm{Uu_k-\Phi}{L^{q_1,q_2}}\leq  2^{-k}$ for all $k$.
	Let $s$ be such that $s \in (1, \min(p_1, p_2))$. We fix a positive auxiliary sequence
	$
	\{a_k\}_{k=1}^{\infty}\in  \ell^s \setminus \ell^1.
	$
	Define
	\[
	g^N := \sum_{k=1}^N a_ku_k.	
	\]
    If the condition (ii) holds with $j=1$ we have, also using condition (i), that
	\begin{align*}
		\Norm{g^N}{L^{p_1,p_2}} 
        &= \Big( \sum_{k=1}^N  \Norm{  a_ku_k }{L^{p_1, p_2}}^{p_1}  \Big)^{1/p_1} \\
        &=  \Big( \sum_{k=1}^N a_k^{p_1} \Norm{  u_k }{L^{p_1, p_2}}^{p_1}  \Big)^{1/p_1}
        \lesssim  \Big( \sum_{k=1}^N a_k^{p_1}  \Big)^{1/p_1}
        \le \Big(\sum_{k=1}^N a_k^s\Big)^{1/s}\lesssim 1.
	\end{align*}
	On the other hand, if the condition $(ii)$ holds with $j=2$, then
    \begin{align*}
		\Norm{g^N}{L^{p_1,p_2}} 
        &=  \Big\| \Big(  \sum_{k=1}^N \|a_k u_k\|_{L^{p_2}}^{p_2} \Big)^{1/p_2} \Big\|_{L^{p_1}} \\
        &\le \Big\| \Big(  \sum_{k=1}^N \|a_k u_k\|_{L^{p_2}}^{s} \Big)^{1/s} \Big\|_{L^{p_1}} \\
        &= \Big\| \sum_{k=1}^N \|a_k u_k\|_{L^{p_2}}^{s} \Big\|_{L^{p_1/s}}^{1/s} \\
        &\le \Big( \sum_{k=1}^N \| \|a_ku_k\|_{L^{p_2}}^s \|_{L^{p_1/s}} \Big)^{1/s} \\
        &= \Big( \sum_{k=1}^N a_k^s \|u_k\|_{L^{p_1, p_2}}^s \Big)^{1/s}
        \lesssim  \Big(\sum_{k=1}^N a_k^s\Big)^{1/s}\lesssim 1.
    \end{align*}
    We used here, in addition to (ii), the condition (i), that $\ell^s \subset \ell^{p_2}$ as $s < p_2$ and
    that $L^{p_1/s}$ is a normed space as $s < p_1$.
	We have proved that in both cases
	\begin{align}\label{eq:boundUNI}
		\sup_{N}\Norm{g^N}{L^{p_1,p_2}}\lesssim 1.
	\end{align}

	Now, we set $h^N := \sum_{k=1}^Na_k\Phi$ and estimate 
	\begin{equation}\label{eq:dede}
		\begin{split}
			\bNorm{Ug^N - h^N}{L^{q_1,q_2}} 
            &= \BNorm{\sum_{k=1}^N a_k\left(Uu_k-\Phi\right) }{L^{q_1,q_2}} \\
            &\leq  \sum_{k=1}^N a_k \bNorm{Uu_k-\Phi}{L^{q_1,q_2}} \\
			&\leq \sum_{k=1}^Na_k2^{-k} \leq \Big(\sum_{k=1}^Na_k^s\Big)^{1/s}
            \Big(\sum_{k=1}^N2^{-ks'} \Big)^{1/s'} \lesssim 1.
		\end{split}
	\end{equation}
	By \eqref{eq:dede}, \eqref{eq:boundUNI} and the boundedness of $U$ (recalling
    also that $\Phi$ is non-trivial) we obtain a contradiction:
	\begin{align*}
        \sum_{k=1}^Na_k &\sim 	\sum_{k=1}^Na_k \Norm{\Phi}{L^{q_1,q_2}}
        = \bNorm{ h^N}{L^{q_1,q_2}}\\ 
                        &\leq \bNorm{Ug^N - h^N}{L^{q_1,q_2}}
                        + \bNorm{Ug^N}{L^{q_1,q_2}}
                        \lesssim 1+ \Norm{U}{L^{p_1,p_2}\to L^{q_1,q_2}}\lesssim 1.
	\end{align*}
	Indeed, this uniform bound contradicts the choice  $\{a_k\}\not\in \ell^1.$
    We are done.
\end{proof}

We now introduce the specific meaning of non-degeneracy and recall
a few results from \cite{AHLMO2021}.
\begin{defn}[Non-degenerate kernels]
Let $K_i$ be a Calder\'on-Zygmund kernel on $\R^{d_i}$.
We say that $K_i$ is {\em (symmetrically) non-degenerate} if there is a constant
$c_0 > 0$ such that for every $y_i\in\R^{d_i}$ and $r>0$ there exists
$x_i\in B(y_i,r)^c$ so that there holds
\[
    |K_i(x_i,y_i)|\ge \frac 1{c_0 r^{d_i}}
    \quad\Big(\text{and}\quad |K_i(y_i,x_i)|\ge \frac 1{c_0 r^{d_i}}\Big).
\]
\end{defn}
\begin{rem}
    Notice that the $x_i$ in the above definition necessarily satisfies $|x_i-y_i| \sim r$.
\end{rem}
If $T_i$ is a CZO whose kernel $K_i$ is a non-degenerate Calder\'on-Zygmund kernel,
then we say that $T_i$ is a non-degenerate CZO.
\begin{defn}[The ``reflected'' cube $\wt I_i$]
Let $K_i$ be a fixed non-degenerate Calder\'on-Zygmund kernel on $\R^{d_i}$,
and fix a large constant $A \ge 3$. For each cube $I_i \subset\R^{d_i}$
with center $c_{I_i}$ and sidelength $\ell(I_i)$,
we define another cube $\wt I_i\subset\R^{d_i}$ of the same size by choosing a center
$c_{\wt I_i}$, guaranteed by the non-degenaracy of $K_i$, so that
\begin{equation*}
  |c_{I_i}-c_{\wt I_i}| \sim A\ell(I_i),\qquad |K_i(c_{\wt I_i}, c_{I_i})| \sim (A \ell(I_i))^{-d_i}.
\end{equation*}
Notice that $\dist(I_i,\wt I_i)\sim A\ell(I_i)$.
If $K_i$ is symmetrically non-degenerate, we require in addition that
$|K_i(c_{I_i}, c_{\wt I_i})| \sim (A \ell(I_i))^{-d_i}$. While the choice of
$\wt I_i$ may not be unique in general, in the symmetric we arrange
$\,\wt{\!\wt I_i}=I_i$.
\end{defn}
If $K_1$ and $K_2$ are two (symmetrically) non-degenerate Calder\'on--Zygmund
kernels on $\R^{d_1}$ and $\R^{d_2}$, and $I_i \subset \R^{d_i}$
are cubes, we of course define $\wt I_i$ with respect to $K_i$. For each rectangle
$R=I_1\times I_2\subset\R^d = \R^{d_1} \times \R^{d_2}$, we define $\wt R:=\wt I_1\times \wt I_2$.

\begin{rem}\label{rem:symvsnon}
    In this paper, the nyances between symmetrically non-degenerate and just non-degenerate do not really
    manifest themselves. The issue is more prominent in \cite{AHLMO2021} -- in particular, a version
    of Lemma \ref{lem:absorb} with a weaker a priori assumption on the symbol
    requires a treatment taking into account such nyances (see \cite{AHLMO2021}).
    In this paper, mainly because of the work done in \cite{AHLMO2021} (as we soon discuss),
    we do not have to explicitly deal with such issues. What is important to us is that
    the boundedness table of the bi-commutator (proved in \cite{AHLMO2021}) from the Introduction is valid --
    this is true at least if both operators are symmetrically non-degenerate, but one could
    get away with somewhat weaker assumptions as well (see \cite{AHLMO2021}).
\end{rem}

Finally, we define the following notation
\[
L^{u_1,u_2}_{0,0}(I\times J) := \left\{ \phi \in L^{u_1,u_2}(I\times J):
\int_{I}\phi(x_1,\cdot)\ud x_1 = \int_{J}\phi(\cdot,x_2)\ud x_2 = 0  \right\}.
\]
The following rectangular weak factorization is from \cite{AHLMO2021}.
Recall that $\alpha_i$, a constant that appears in some of the estimates, is
just the kernel constant from the continuity estimate \eqref{eq:holcon}.
\begin{lem}
    \label{lem:awf} 
    Let $T1, T_2$ be non-degenerate CZOs on $\R^{d_i}$.
    Let $R=I\times J$ be a fixed rectangle and denote
    $$
    R_1 = \wt I\times J,\qquad R_2 =  I \times \wt J,\qquad R_3 = \wt R = \wt I \times \wt J.
    $$
    Then each $f\in L^{1,1}_{0,0}(R)$ can be expanded as 
    \begin{align*}
        f &= g_{\wt{R}} T_1T_2 h_R-T_2 h_R \cdot 
        T_1^*g_{\wt{R}}-T_1h_R
        \cdot T_2^*g_{\wt{R}}+h_R T_1^*T_2^*g_{\wt{R}}
        + \sum_{j=1}^3\tilde{f}_j,
    \end{align*}
    where the appearing functions satisfy:
    \begin{itemize}
        \item $g_{\wt{R}} = 1_{\wt{R}},$
        \item $h_R \in L^1(R)$ with $\abs{h_R(x)} \lesssim A^d \abs{f(x)},$
        \item $\wt{f}_j\in L_{0,0}^{1,1}(R_j)$, $j=1,2,3$,
        \item $	\abs{\tilde f_1(x)}
            \lesssim A^{-\alpha_1} 1_{\wt I}(x_1) \langle |f| \rangle_{I}(x_2),$
        \item $\abs{\tilde f_2(x)}
            \lesssim A^{-\alpha_2} \langle |f| \rangle_{J}(x_1)1_{\wt J}(x_2),$
        \item $\abs{\tilde f_3(x)}
            \lesssim A^{-\alpha_1} A^{-\alpha_2}  \langle |f| \rangle_{R}1_{\wt{R}}(x)$.
    \end{itemize}
    In particular, with $1 \le u_i \le \infty$, we have 
    $$
    \|h_R\|_{L^{u_1,u_2}} \lesssim A^d \|f\|_{L^{u_1,u_2}} 
    \qquad \textup{and} \qquad \Norm{\tilde{f}_j}{L^{u_1,u_2}}
    \lesssim A^{-\min(\alpha_1, \alpha_2)} \Norm{f}{L^{u_1,u_2}}.
    $$
    The implicit constants depend only on the dimension and the kernels.
\end{lem}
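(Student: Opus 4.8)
This is the rectangular approximate weak factorization of \cite{AHLMO2021}; we recall the construction. The plan is to take $g_{\wt R} := 1_{\wt R} = 1_{\wt I}\otimes 1_{\wt J}$ and to choose $h_R$, supported on $R$, in an essentially forced way; everything else is then determined and reduces to bookkeeping. Writing
$$
\Theta(g,h) := g\,T_1T_2h - (T_2h)(T_1^*g) - (T_1h)(T_2^*g) + h\,T_1^*T_2^*g,
$$
the claim is $f = \Theta(g_{\wt R},h_R) + \sum_{j=1}^3 \wt f_j$. The first observation is that on $R$ exactly three of the four terms of $\Theta(g_{\wt R},h_R)$ vanish: $g_{\wt R}\equiv 0$ on $R$ since $R\cap\wt R=\emptyset$, while $T_1^*g_{\wt R} = (T_1^*1_{\wt I})\otimes 1_{\wt J}$ and $T_2^*g_{\wt R} = 1_{\wt I}\otimes(T_2^*1_{\wt J})$ both vanish on $R=I\times J$ because $1_{\wt J}\equiv 0$ on $J$ and $1_{\wt I}\equiv 0$ on $I$. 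Dually, using that $h_R$ is supported in $I\times J$, the term $(T_1h_R)(T_2^*g_{\wt R})$ is supported in $\wt I\times J = R_1$, the term $(T_2h_R)(T_1^*g_{\wt R})$ in $I\times\wt J = R_2$, and $g_{\wt R}T_1T_2h_R$ in $\wt R = R_3$.

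For the surviving term I would invoke non-degeneracy and the H\"older estimate \eqref{eq:holcon}, together with $\dist(I,\wt I)\sim A\ell(I)$, to obtain $T_1^*1_{\wt I}(x_1) = \lambda_1 + O(A^{-\alpha_1}|\lambda_1|)$ for $x_1\in I$, where $\lambda_1 := |\wt I|\,K_1(c_{\wt I},c_I)$ has $|\lambda_1|\sim A^{-d_1}$, and symmetrically for $T_2^*1_{\wt J}$ on $J$. Hence, once $A$ is fixed large enough in terms of the dimension and the kernel constants, $(T_1^*1_{\wt I})\otimes(T_2^*1_{\wt J})$ stays comparable to $A^{-d}$ on $R$, and I set
$$
h_R := \frac{f}{(T_1^*1_{\wt I})\otimes(T_2^*1_{\wt J})}\,1_R,
$$
so that $|h_R|\lesssim A^d|f|$ and $h_R\,T_1^*T_2^*g_{\wt R} = f$ on $R$ and $=0$ off $R$. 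Combining the two previous observations, $f - \Theta(g_{\wt R},h_R) = -g_{\wt R}T_1T_2h_R - (T_2h_R)(T_1^*g_{\wt R}) - (T_1h_R)(T_2^*g_{\wt R})$, which is the asserted decomposition with $\wt f_3 := -g_{\wt R}T_1T_2h_R$, $\wt f_2 := -(T_2h_R)(T_1^*g_{\wt R})$ and $\wt f_1 := -(T_1h_R)(T_2^*g_{\wt R})$, supported as claimed.

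It remains to extract the gains $A^{-\alpha_i}$ and the vanishing-integral conditions. For $\wt f_1$, cancelling the factor $T_2^*1_{\wt J}$ leads to $\int_I \frac{K_1(x_1,y_1)}{T_1^*1_{\wt I}(y_1)}\,f(y_1,x_2)\,\ud y_1$; freezing numerator and denominator at the cube centres gives $\frac{K_1(x_1,y_1)}{T_1^*1_{\wt I}(y_1)} = \frac{1}{|\wt I|} + E_1(x_1,y_1)$ with $|E_1|\lesssim A^{-\alpha_1}|\wt I|^{-1}$, the point being that the frozen main part equals \emph{exactly} $|\wt I|^{-1}$. Since $\int_I f(y_1,x_2)\,\ud y_1 = 0$ for a.e.\ $x_2$ by $f\in L^{1,1}_{0,0}(R)$, the main term drops and $|\wt f_1|\lesssim A^{-\alpha_1}1_{\wt I}(x_1)\langle|f|\rangle_I(x_2)$. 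The symmetric computation bounds $\wt f_2$, and freezing in both variables bounds $\wt f_3$ by $A^{-\alpha_1}A^{-\alpha_2}1_{\wt R}\langle|f|\rangle_R$, since only the cross term $E_1E_2$ survives the two one-dimensional cancellations. The vanishing-integral conditions are handled the same way: since $(T_1^*1_{\wt I})\otimes(T_2^*1_{\wt J})\cdot h_R = f$, integrating $\wt f_1$ in $x_1$ over $\wt I$ reproduces $\int_I f(\cdot,x_2)=0$ and integrating in $x_2$ over $J$ reproduces $\int_J f(y_1,\cdot)=0$, and likewise for $\wt f_2,\wt f_3$. The stated $L^{u_1,u_2}$ estimates then follow at once from the pointwise bounds, using $|\wt I|=|I|$, $|\wt J|=|J|$ and H\"older/Minkowski.

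The only genuine obstacle I anticipate is the simultaneous bookkeeping of the powers of $A$ alongside the cancellations --- in particular checking that the frozen ratio has main part \emph{equal} to $|\wt I|^{-1}$ rather than merely comparable, so that the zero-mean property of $f$ can be invoked, and fixing $A$ large, depending only on admissible data, so that all denominators remain comparable to $A^{-d_i}$.
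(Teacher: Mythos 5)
Your proposal is correct, but note that the paper itself gives no proof of this lemma: it is imported verbatim from \cite{AHLMO2021}, and what you have written is essentially a faithful reconstruction of that approximate weak factorization argument --- the choice $g_{\wt R}=1_{\wt R}$, the division $h_R = f/\big((T_1^*1_{\wt I})\otimes(T_2^*1_{\wt J})\big)1_R$ (legitimate since non-degeneracy plus the H\"older estimate \eqref{eq:holcon} make the denominator of modulus $\sim A^{-d}$ on $R$ once $A$ is large), the support bookkeeping identifying the three error terms, and the freezing trick in which the frozen ratio is exactly $|\wt I|^{-1}$ so that the mean-zero conditions of $f$ produce the $A^{-\alpha_i}$ gains, the vanishing integrals of the $\wt f_j$, and (via Minkowski and $|\wt I|=|I|$, $|\wt J|=|J|$) the mixed-norm bounds. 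The only slip is a sign: with your $\Theta$, off $R$ one has $f-\Theta(g_{\wt R},h_R) = -g_{\wt R}T_1T_2h_R + (T_2h_R)(T_1^*g_{\wt R}) + (T_1h_R)(T_2^*g_{\wt R})$, so $\wt f_1$ and $\wt f_2$ should be taken with plus signs rather than the minus signs you wrote; this is immaterial for the estimates and the cancellation conditions, but the displayed expansion in the lemma requires the corrected signs.
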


In what follows, our aim is to prove that if $[T_1, [b, T_2]]$ is compact, then
$b$ must be in one of our rectangular $\VMO$ spaces. We are proving an optimal
necessary condition (a condition that matches with our upper bounds) 
only in the range $p_1 \le q_1$, $p_2 \le q_2$, where, in addition, $p_1 \ne q_1$
or $p_2 \ne q_2$. In the diagonal, our proofs also yield the necessity of some
rectangular $\VMO$ condition (instead of the product $\VMO$ condition needed in the sufficiency
direction) -- see the Introduction for the full context. So even though the deep question
in the diagonal is the question about the necessity of the product $\VMO$, we find
that it is wortwhile to record a necessary condition in terms of the rectangular $\VMO$
as it is the best one can currently hope for. It also comes essentially for free from
our methods -- the only price being that we have to pay attention to the integrability
exponents used to define the rectangular $\VMO$ (discussed below).

So $p_i \le q_i$, $i = 1,2$, in what follows, as is the case in the rest of this paper.
Now, what is the right a priori condition on $b$ to run our proof? In \cite{AHLMO2021} we
went to some lengths to only
a priori assume $b \in L^1_{\loc}$, and then deduce just from this and the boundedness of the bi-commutator
that $b$ is in some other space (such as, a rectangular $\BMO$ space).
However, here we can simplify our life by utilizing the work done in \cite{AHLMO2021} --
if $[T_1, [b, T_2]]\colon L^{p_1, p_2} \to L^{q_1, q_2}$ is compact, it is also bounded, and so we can always
assume that $b$ is in the space given by the necessity results of \cite{AHLMO2021}
(see the table in the Introduction). That is to say, given our current restriction $p_i \le q_i$,
we can assume $b \in \BMO^{\beta_1, \beta_2}$, where, as usual
$$
  \beta_i := d_i\left( \frac{1}{p_i}-\frac{1}{q_i}\right).
$$
However, recall the technical detail that if $\beta_1 = \beta_2 = 0$ (i.e., $p_1 = q_1$ and $p_2 = q_2$
so we are at the diagonal), we do not have John--Nirenberg: the spaces $\BMO^{0, 0}_{v_1, v_2}$
and $\BMO^{0, 0}_{u_1, u_2}$ may differ from each other. In fact, the table in the Introduction
is loose about this fact -- only $\BMO_{\rect}$ is written there. However, \cite{AHLMO2021} actually
proved that boundedness $[T_1, [b, T_2]] \colon L^{p_1, p_2} \to L^{p_1, p_2}$ implies
$b \in \BMO^{0,0}_{p_1', p_2'}$ (and this is a better result than, say, only getting
$b \in \BMO^{0,0}_{1,1}$). So to take this diagonal case into consideration,
we formulate the following lemma with the a priori assumption $b \in \BMO^{\beta_1, \beta_2}_{p_1', p_2'}$
-- again, any exponents whatsoever can be used outside the diagonal case
(usually $v_1 = v_2 = 1$).

Finally, what is the non-degeneracy assumption needed on $T_1, T_2$ -- do they 
both need to be \textbf{symmetrically} non-degenerate? This was already discussed in Remark \ref{rem:symvsnon}.
Again, we will simply
assume that both of them are symmetrically non-degenerate -- then the boundedness table of the Introduction
can be used without any worries (but this is not strictly necessary, see \cite{AHLMO2021}). 

A version of the following lemma can be found in \cite{AHLMO2021} -- that version
is actually clearly stronger with better quantifications. However, compactness
is a qualitative property and the following version of this lemma is sufficient for
our use and has the benefit of being more \emph{ergonomic} (e.g., only one term on the RHS
of \eqref{eq:belowUNIF}). That is why we state and prove it in this particular form.
\begin{lem}\label{lem:absorb}   
    Let $T_1, T_2$ be symmetrically non-degenerate CZOs on $\R^{d_i}$.
    Let $p_i, q_i\in (1,\infty)$ with $p_i \le q_i$, $i=1,2$.
    Suppose that $b\in\BMO^{\beta_1,\beta_2}_{p_1',p_2'}$ --
    this is guaranteed by \cite{AHLMO2021} if $b \in L^1_{\loc}$ and $[T_1, [b, T_2]]$
    maps $L^{p_1, p_2} \to L^{q_1, q_2}$ boundedly.

    Let $R$ be an arbitrary rectangle. If $	\calO^{\beta_1,\beta_2}_{p_1',p_2'}(b, R) \geq c>0$,
    then, for a large enough parameter $A = A(c,\|b\|_{\BMO^{\beta_1,\beta_2}_{p_1',p_2'}})$
    (implicit in the definition of reflected cubes),
    there holds that 
    \begin{align}\label{eq:belowUNIF}
        \calO^{\beta_1, \beta_2}_{p_1', p_2'}(b, R) \lesssim
        \Babs{\Big \langle [T_1,[b,T_2]]\varphi_{R}, \psi_{\wt{R}} \Big\rangle},
    \end{align}
    where the test functions satisfy
    \begin{align}\label{eq:phis}
        \varphi_{R} = 1_{R}\varphi_{R},\qquad \bNorm{\varphi_{R} }{L^{p_1,p_2}}\lesssim_A 1,
    \end{align}
    \begin{align}\label{eq:psis}
        \psi_{\wt{R}} = 1_{\wt{R}}\varphi_{\wt{R}},\qquad \bNorm{\psi_{\wt{R}} }{L^{q_1',q_2'}}\lesssim 1.
    \end{align}
\end{lem}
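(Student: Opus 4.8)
The plan is to realize $\calO^{\beta_1,\beta_2}_{p_1',p_2'}(b,R)$ by $L^{p_1,p_2}$--$L^{p_1',p_2'}$ duality, to turn the resulting pairing into one with $[T_1,[b,T_2]]$ through the rectangular weak factorization of Lemma~\ref{lem:awf}, and then to absorb the factorization errors using the hypothesis $\calO^{\beta_1,\beta_2}_{p_1',p_2'}(b,R)\ge c$ together with a large reflection parameter $A$. The identity that aligns all normalizations is $d_i/p_i'+\beta_i=d_i/q_i'$, equivalently $\beta_i=d_i(1/q_i'-1/p_i')$, and it is here that the standing hypothesis $p_i\le q_i$ (i.e. $\beta_i\ge0$) is used. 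Concretely, write $R=I\times J$ and $\tilde b:=b-\langle b\rangle_I-\langle b\rangle_J+\langle b\rangle_R$, so that $\osc_{p_1',p_2'}(b,R)=\|\tilde b\|_{L^{p_1',p_2'}(R)}/(|I|^{1/p_1'}|J|^{1/p_2'})$ and $\tilde b$ has vanishing integral in $x_1$ over $I$ and in $x_2$ over $J$. Dualizing $L^{p_1,p_2}(R)$ against $L^{p_1',p_2'}(R)$ and projecting a near-extremizer onto the subspace of functions sharing this double mean-zero property --- a projection bounded on $L^{p_1,p_2}(R)$ that leaves the pairing against $\tilde b$ unchanged --- produces $f\in L^{1,1}_{0,0}(R)$ with
$$\|f\|_{L^{p_1,p_2}}\lesssim\ell(I)^{-d_1/q_1'}\ell(J)^{-d_2/q_2'},\qquad \langle b,f\rangle=\langle\tilde b,f\rangle\gtrsim\calO^{\beta_1,\beta_2}_{p_1',p_2'}(b,R);$$
the displayed size of $\|f\|_{L^{p_1,p_2}}$ is precisely what the exponent identity forces, and $\tilde b\in L^{p_1',p_2'}(R)$ since $b\in\BMO^{\beta_1,\beta_2}_{p_1',p_2'}$.

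Next I would apply Lemma~\ref{lem:awf} to this $f$, expanding it into its four main terms built from $h_R$ and $g_{\wt R}=1_{\wt R}$ plus the errors $\tilde f_1,\tilde f_2,\tilde f_3$ supported on $R_1,R_2,R_3$. Dualizing the expansion against $b$ --- all pairings being legitimate since $h_R$ and $1_{\wt R}$ are compactly supported and $[T_1,[b,T_2]]$ is bounded $L^{p_1,p_2}\to L^{q_1,q_2}$ --- and using $[T_1,[b,T_2]]^*=[T_1^*,[b,T_2^*]]$, $T_i^*T_j^*=T_j^*T_i^*$, together with the identity
$$\langle[T_1,[b,T_2]]u,v\rangle=\big\langle b,\ (T_2u)(T_1^*v)-u(T_2^*T_1^*v)-(T_2T_1u)v+(T_1u)(T_2^*v)\big\rangle$$
(the algebra underpinning the weak factorization method of \cite{AHLMO2021}), one identifies the dual of the four main terms with $-\langle[T_1,[b,T_2]]h_R,1_{\wt R}\rangle$, so that
$$\langle[T_1,[b,T_2]]h_R,1_{\wt R}\rangle=-\langle b,f\rangle+\sum_{j=1}^3\langle b,\tilde f_j\rangle.$$
I then set $\varphi_R:=\ell(I)^{d_1/q_1'}\ell(J)^{d_2/q_2'}\,h_R$ and $\psi_{\wt R}:=\ell(I)^{-d_1/q_1'}\ell(J)^{-d_2/q_2'}\,1_{\wt R}$. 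Since the two scalars multiply to $1$, the left-hand side equals $\langle[T_1,[b,T_2]]\varphi_R,\psi_{\wt R}\rangle$; the support statements in \eqref{eq:phis}--\eqref{eq:psis} are immediate from $\supp h_R\subset R$ and $\supp 1_{\wt R}\subset\wt R$; and $\|\psi_{\wt R}\|_{L^{q_1',q_2'}}=1$, while $\|\varphi_R\|_{L^{p_1,p_2}}\lesssim\ell(I)^{d_1/q_1'}\ell(J)^{d_2/q_2'}A^d\|f\|_{L^{p_1,p_2}}\lesssim A^d\lesssim_A1$, using $\|h_R\|_{L^{p_1,p_2}}\lesssim A^d\|f\|_{L^{p_1,p_2}}$ from Lemma~\ref{lem:awf}.

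It remains to control the errors and absorb. Since $\tilde f_j\in L^{1,1}_{0,0}(R_j)$, in $\langle b,\tilde f_j\rangle$ we may replace $b$ by its double oscillation $\tilde b^{(j)}$ over $R_j$; then Hölder's inequality, the fact that $R_j$ has the same side lengths as $R$ (so that the exponent identity gives $\|\tilde b^{(j)}\|_{L^{p_1',p_2'}(R_j)}\le\ell(I)^{d_1/q_1'}\ell(J)^{d_2/q_2'}\|b\|_{\BMO^{\beta_1,\beta_2}_{p_1',p_2'}}$), and the gain $\|\tilde f_j\|_{L^{p_1,p_2}}\lesssim A^{-\min(\alpha_1,\alpha_2)}\|f\|_{L^{p_1,p_2}}$ from Lemma~\ref{lem:awf} give $|\langle b,\tilde f_j\rangle|\lesssim A^{-\min(\alpha_1,\alpha_2)}\|b\|_{\BMO^{\beta_1,\beta_2}_{p_1',p_2'}}$. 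Hence
$$\calO^{\beta_1,\beta_2}_{p_1',p_2'}(b,R)\lesssim|\langle b,f\rangle|\le|\langle[T_1,[b,T_2]]\varphi_R,\psi_{\wt R}\rangle|+CA^{-\min(\alpha_1,\alpha_2)}\|b\|_{\BMO^{\beta_1,\beta_2}_{p_1',p_2'}},$$
and the hypothesis now enters: as $\calO^{\beta_1,\beta_2}_{p_1',p_2'}(b,R)\ge c$, choosing $A=A(c,\|b\|_{\BMO^{\beta_1,\beta_2}_{p_1',p_2'}})$ so large that $CA^{-\min(\alpha_1,\alpha_2)}\|b\|_{\BMO^{\beta_1,\beta_2}_{p_1',p_2'}}\le\tfrac12 c\le\tfrac12\calO^{\beta_1,\beta_2}_{p_1',p_2'}(b,R)$ absorbs the last term into the left-hand side and yields \eqref{eq:belowUNIF}.

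The main obstacle is exactly this absorption. The errors $\tilde f_j$ are supported on the reflected rectangles $R_j$, which sit far from $R$ (at distance $\sim A\ell(I)$ or $\sim A\ell(J)$, $A$ being large), so only the global quantity $\|b\|_{\BMO^{\beta_1,\beta_2}_{p_1',p_2'}}$ --- not any localized oscillation of $b$ near $R$ --- is available to bound $\langle b,\tilde f_j\rangle$; consequently $\sum_j|\langle b,\tilde f_j\rangle|$ cannot be absorbed into $\calO^{\beta_1,\beta_2}_{p_1',p_2'}(b,R)$ unconditionally, which is precisely why the quantitative hypothesis $\calO^{\beta_1,\beta_2}_{p_1',p_2'}(b,R)\ge c$ is imposed and why $A$ is permitted to depend on $c$ and on $\|b\|_{\BMO^{\beta_1,\beta_2}_{p_1',p_2'}}$. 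A secondary, purely bookkeeping issue is making the single function $f$ simultaneously realize the oscillation lower bound through its $L^{p_1,p_2}$ size and, after rescaling, produce the $L^{q_1',q_2'}$-normalized $\psi_{\wt R}=1_{\wt R}$ --- possible only because $d_i/p_i'+\beta_i=d_i/q_i'$.
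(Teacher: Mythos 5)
Your proposal is correct and follows essentially the same route as the paper: dualize the rectangular oscillation to get $f\in L^{p_1,p_2}_{0,0}(R)$, expand $f$ via Lemma \ref{lem:awf}, identify the four main terms with $-\langle [T_1,[b,T_2]]h_R, 1_{\wt R}\rangle$, bound the errors by $A^{-\min(\alpha_1,\alpha_2)}\|b\|_{\BMO^{\beta_1,\beta_2}_{p_1',p_2'}}$ using H\"older and the exponent identity $d_i/p_i'+\beta_i=d_i/q_i'$, and absorb using $\calO^{\beta_1,\beta_2}_{p_1',p_2'}(b,R)\ge c$ with $A$ large. The only (immaterial) difference is where the scalar normalization $\ell(I)^{d_1/q_1'}\ell(J)^{d_2/q_2'}$ is placed between $\varphi_R$ and $\psi_{\wt R}$; the paper puts it all on $\psi_{\wt R}$.
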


\begin{proof}
    We note that the proof only explicitly requires that $T_1, T_2$ are non-degenerate
    and $b\in\BMO^{\beta_1,\beta_2}_{p_1',p_2'}$ -- it is not needed that $p_i \le q_i$
    or that the CZOs are \emph{symmetrically} non-degenerate (but then one cannot make the
    ``this is guaranteed by \cite{AHLMO2021}'' claim of the lemma and the result is of marginal
    interest in that form).

    By simple duality we write 
    \begin{align}
        \Norm{b-\langle b\rangle_{I} -\langle b\rangle_{J} 
        +\langle b\rangle_R}{L^{p_1', p_2'}(R)} \lesssim \Big| \int bf \Big|,
    \end{align}
    where $f\in L^{p_1,p_2}_{0,0}(R)$ with $ \|f\|_{L^{p_1,p_2}(R)} \le 1$.
    We apply Lemma  \ref{lem:awf} and write
    \begin{equation*}
        \begin{split}
            \pair{b}{f}
            &=\Bpair{b}{g_{\wt{R}} T_1T_2 h_{R}-T_2 h_{R} \cdot T_1^*g_{\wt{R}}-T_1h_{R}
            \cdot T_2^*g_{\wt{R}}+h_{R} T_1^*T_2^*g_{\wt{R}}} + \sum_{j=1}^3\pair{b}{\tilde{f}_j} \\
            &=\Bpair{b T_1T_2 h_{R}- T_1(b T_2 h_{R})-T_2(bT_1 h_{R})+ T_1T_2(bh_{R})}{g_{\wt{R}} }
            + \sum_{j=1}^3\pair{b}{\tilde{f}_j} \\
            &=-\Bpair{[T_1,[b,T_2]] h_{R}}{g_{\wt{R}} } + \sum_{j=1}^3\pair{b}{\tilde{f}_j}.
        \end{split}
    \end{equation*}

    In the following estimate we denote $R_j= I_j\times J_j$
    (recall the definitions of $R_1, R_2, R_3$ from Lemma \ref{lem:awf}).
    By $\wt{f}_j\in L_{0,0}^{1,1}(R_j)$ and H\"{o}lder's inequality
    \begin{align*}
        \babs{\pair{b}{\tilde{f}_j} }
        &= \Babs{\int_{I_j} \int_{J_j}
        (b- \ave{b}_{I_j}-\ave{b}_{J_j}+\ave{b}_{R_j})\wt{f}_j} \\
        &\leq \Norm{\wt{f}_j}{L^{p_1,p_2}} \Norm{b-\langle b\rangle_{I_j} -\langle b\rangle_{J_j} 
        +\langle b\rangle_{R_j}}{L^{p_1', p_2'}(R_j)} \\
        &\lesssim A^{-\min(\alpha_1, \alpha_2)}  \Norm{b}{\BMO^{\beta_1,\beta_2}_{p_1',p_2'}}
        |I|^{1/p_1'}|J|^{1/p_2'} \ell(I)^{\beta_1} \ell(J)^{\beta_2}.
    \end{align*}
    Thus, we obtain
    \begin{align*}
        \calO^{\beta_1,\beta_2}_{p_1',p_2'}(b,R) 
        &\lesssim \ell(I)^{-\beta_1} \ell(J)^{-\beta_2} \frac{|\langle b, f\rangle|}{|I|^{1/p_1'} |J|^{1/p_2'}}\\ 
        &\lesssim  \Babs{\Bpair{[T_1,[b,T_2]] \varphi_R}{\psi_{\wt{R}}}}
        + A^{-\min(\alpha_1, \alpha_2)} \Norm{b}{\BMO^{\beta_1,\beta_2}_{p_1',p_2'}},
    \end{align*}
    where we have denoted 
    \[
        \varphi_R := h_R\qquad \psi_{\wt{R}} := g_{\wt{R}}|I|^{-\frac{1}{q_1'}}|J|^{-\frac{1}{q_2'}}.
    \]
    Here we used that
    $$
        \ell(I)^{-\beta_1} |I|^{-1/p_1'} = |I|^{-\beta_1/d_1 - 1/p_1'}
        = |I|^{-1/p_1 + 1/q_1 -1/p_1'} = |I|^{1/q_1 - 1} = |I|^{-1/q_1'}
    $$
    and similarly for the $J$ term. From Lemma \ref{lem:awf} we know that
    $$
        \|\varphi_R\|_{L^{p_1, p_2}} \lesssim A^d 
    $$
    and
    $$
        \|\psi_{\wt R}\|_{L^{q_1', q_2'}} = 1
    $$
    so these functions satisfy \eqref{eq:phis} and \eqref{eq:psis}.
    Finally, using the assumption $\calO^{\beta_1,\beta_2}_{p_1',p_2'}(b,R) \geq c$,
    we get \eqref{eq:belowUNIF} provided $A$ is sufficiently large.
\end{proof}

We recall the following standard result about compact operators that is used  in the upcoming proof.
\begin{lem}
    Let $X,Y$ be Banach spaces and $T:X\to Y$ be a bounded linear operator.
    Then, $T\in\calK(X,Y)$ if and only if $T^*\in\calK(X^*,Y^*).$
\end{lem}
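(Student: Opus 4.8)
This is the classical Schauder theorem, and the plan is to prove both implications symmetrically via the Arzel\`a--Ascoli theorem, the key observation being that a bounded linear functional, restricted to a bounded set, is a Lipschitz function with a uniform constant, so uniformly bounded families of such functionals are automatically equicontinuous on any compact subset.

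For the forward direction, assume $T \in \calK(X,Y)$ and let $B_X$ denote the closed unit ball of $X$. Then $K := \overline{T(B_X)}$ is a compact metric space. To show $T^* \in \calK(Y^*,X^*)$ it suffices to take an arbitrary sequence $(g_n)$ in the closed unit ball of $Y^*$ and produce a subsequence along which $(T^* g_n)$ is Cauchy in $X^*$. Viewing the $g_n$ as continuous functions on $K$, the family $\{g_n|_K\}$ is uniformly bounded (by $\sup_{y\in K}\|y\|_Y$) and $1$-Lipschitz, hence equicontinuous, so Arzel\`a--Ascoli gives a subsequence $(g_{n_k})$ converging uniformly on $K$. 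Since
$$
\|T^* g_{n_k} - T^* g_{n_j}\|_{X^*} = \sup_{x\in B_X} |g_{n_k}(Tx) - g_{n_j}(Tx)| \le \sup_{y\in K} |g_{n_k}(y) - g_{n_j}(y)|,
$$
the sequence $(T^* g_{n_k})$ is Cauchy and hence convergent in the Banach space $X^*$; thus $T^*$ is compact.

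For the reverse direction, assume $T^* \in \calK(Y^*,X^*)$, so $K' := \overline{T^*(B_{Y^*})}$ is compact in $X^*$. Given a sequence $(x_n)$ in $B_X$, the evaluation maps $\widehat{x}_n\colon \phi \mapsto \phi(x_n)$ form a uniformly bounded, $1$-Lipschitz (hence equicontinuous) family of functions on $K'$, so by Arzel\`a--Ascoli a subsequence converges uniformly on $K'$; since $T^* g \in K'$ for every $g$ in the unit ball of $Y^*$, this yields
$$
\|Tx_{n_k} - Tx_{n_j}\|_Y = \sup_{g\in B_{Y^*}} |(T^* g)(x_{n_k}) - (T^* g)(x_{n_j})| \le \sup_{\phi\in K'} |\phi(x_{n_k}) - \phi(x_{n_j})| \to 0,
$$
so $(Tx_{n_k})$ is Cauchy in $Y$, proving $T$ compact. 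Alternatively, the reverse direction can be deduced from the forward one: applying it twice, $T^{**}$ is compact, and from $\iota_Y \circ T = T^{**} \circ \iota_X$ (with $\iota_X,\iota_Y$ the canonical isometric embeddings into the biduals, and $\iota_Y(Y)$ a closed subspace of $Y^{**}$ since $Y$ is complete) the compactness of $T^{**}$ restricted to $\iota_X(X)$ transfers back to $T$. There is no serious obstacle here; the only point requiring care is the uniform-equicontinuity observation above that makes the Arzel\`a--Ascoli applications go through cleanly.
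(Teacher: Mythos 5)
Your proof is correct and complete: both Arzel\`a--Ascoli applications are justified by the uniform $1$-Lipschitz observation, the identity $\|T^*g_{n_k}-T^*g_{n_j}\|_{X^*}=\sup_{x\in B_X}|g_{n_k}(Tx)-g_{n_j}(Tx)|$ and the Hahn--Banach duality $\|y\|_Y=\sup_{g\in B_{Y^*}}|g(y)|$ are used correctly, and the alternative bidual argument for the reverse direction is also sound. There is nothing to compare against, since the paper states this lemma (Schauder's theorem) as a classical fact and gives no proof. One minor remark: the paper's statement writes $T^*\in\calK(X^*,Y^*)$, but the adjoint maps $Y^*\to X^*$; your version $T^*\in\calK(Y^*,X^*)$ is the correct reading.
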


Finally, we are ready to prove our main necessity theorem.
\begin{thm}\label{thm:necessity}
    Let $T_1, T_2$ be symmetrically non-degenerate CZOs on $\R^{d_i}$.
    Let $p_i, q_i\in (1,\infty)$ with $p_i \le q_i$, $i=1,2$.
    If $[T_1,[b,T_2]]\in \calK(L^{p_1,p_2},L^{q_1,q_2})$,
    then $b\in\VMO^{\beta_1,\beta_2}_{p_1',p_2'}.$
\end{thm}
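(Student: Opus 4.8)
\begin{proof}[Proof of Theorem \ref{thm:necessity}]
The plan is to argue by contradiction and produce a sequence forbidden by Lemma~\ref{lem:seq}. Since $[T_1,[b,T_2]]$ is compact it is in particular bounded, so by the necessity results of \cite{AHLMO2021} (valid in our range $p_i\le q_i$) we have the a priori membership $b\in\BMO^{\beta_1,\beta_2}_{p_1',p_2'}$, which is exactly what is needed to invoke Lemma~\ref{lem:absorb}. Assume now that $b\notin\VMO^{\beta_1,\beta_2}_{p_1',p_2'}$. Then there are $\varepsilon_0>0$ and rectangles $R_k=I_k\times J_k$ with
\[
\calO^{\beta_1,\beta_2}_{p_1',p_2'}(b,R_k)\ge\varepsilon_0
\]
for all $k$, where each $R_k$ is degenerate in the sense that one of $\ell(I_k),\ell(J_k)$ is $<1/k$ or $>k$, or $\dist(R_k,\{0\})>k$. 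Since $\dist(R_k,\{0\})\to\infty$ forces $\dist(I_k,\{0\})\to\infty$ or $\dist(J_k,\{0\})\to\infty$, after passing to a subsequence and pigeonholing we may assume the degeneration takes place entirely in one factor, say in $\R^{d_1}$: one of $\ell(I_k)\to0$, $\ell(I_k)\to\infty$, or $\dist(I_k,\{0\})\to\infty$ holds.

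Next, fix a parameter $A$ large enough (depending only on $\varepsilon_0$ and $\|b\|_{\BMO^{\beta_1,\beta_2}_{p_1',p_2'}}$) so that Lemma~\ref{lem:absorb} applies to every $R_k$ and furnishes test functions $\varphi_{R_k}=1_{R_k}\varphi_{R_k}$ and $\psi_{\wt{R}_k}=1_{\wt{R}_k}\varphi_{\wt{R}_k}$ with $\|\varphi_{R_k}\|_{L^{p_1,p_2}}\lesssim1$, $\|\psi_{\wt{R}_k}\|_{L^{q_1',q_2'}}\lesssim1$ and
\[
\varepsilon_0\le\calO^{\beta_1,\beta_2}_{p_1',p_2'}(b,R_k)\lesssim\Babs{\Bpair{[T_1,[b,T_2]]\varphi_{R_k}}{\psi_{\wt{R}_k}}}
\]
uniformly in $k$. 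The combinatorial heart of the argument -- a version of an algorithm of Uchiyama \cite{Uch1978} -- is to pass to a further subsequence so that \emph{either} the cubes $\{I_k\}\subset\R^{d_1}$ are pairwise disjoint, \emph{or} the reflected cubes $\{\wt{I}_k\}\subset\R^{d_1}$ are pairwise disjoint. Both alternatives are genuinely needed. If the $I_k$ shrink towards, or escape along a sequence, a point that they do not eventually all contain, then a sufficiently sparse subsequence of the $I_k$ themselves is already pairwise disjoint. If instead the $I_k$ eventually all contain a common point, then each $\wt{I}_k$ lies in a spherical shell centred near that point of inner and outer radii both comparable to $A\ell(I_k)$; since $A$ is now a fixed constant and $\ell(I_k)\to0$ or $\ell(I_k)\to\infty$, along a sparse subsequence these radii are as spread out as we like, making the $\wt{I}_k$ pairwise disjoint. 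Splitting the configurations into finitely many regimes according to whether the $I_k$ remain in a bounded region and whether they pile up at a common point disposes of all cases.

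Finally, write $U:=[T_1,[b,T_2]]$, a compact operator $L^{p_1,p_2}\to L^{q_1,q_2}$, so that $U^*$ is compact $L^{q_1',q_2'}\to L^{p_1',p_2'}$. In the first alternative put $u_k:=\varphi_{R_k}$: then $\|u_k\|_{L^{p_1,p_2}}\lesssim1$, the supports $\supp u_k\subset R_k$ have pairwise disjoint projections onto $\R^{d_1}$, and compactness of $U$ gives, along a subsequence, $Uu_k\to\Phi$ in $L^{q_1,q_2}$. By H\"older's inequality and $\|\psi_{\wt{R}_k}\|_{L^{q_1',q_2'}}\lesssim1$ the uniform lower bound becomes $\|Uu_k\|_{L^{q_1,q_2}}\gtrsim\varepsilon_0$, whence $\|\Phi\|_{L^{q_1,q_2}}\gtrsim\varepsilon_0>0$ and $\Phi$ is non-trivial, contradicting Lemma~\ref{lem:seq} applied to $U$. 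In the second alternative put instead $u_k:=\psi_{\wt{R}_k}$: then $\|u_k\|_{L^{q_1',q_2'}}\lesssim1$, the supports have pairwise disjoint $\R^{d_1}$-projections, compactness of $U^*$ gives $U^*u_k\to\Phi$ in $L^{p_1',p_2'}$ along a subsequence, and $\Babs{\Bpair{\varphi_{R_k}}{U^*u_k}}=\Babs{\Bpair{[T_1,[b,T_2]]\varphi_{R_k}}{\psi_{\wt{R}_k}}}\gtrsim\varepsilon_0$ together with $\|\varphi_{R_k}\|_{L^{p_1,p_2}}\lesssim1$ again forces $\Phi\ne0$, contradicting Lemma~\ref{lem:seq} applied to $U^*$. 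Either way we reach a contradiction, so $b\in\VMO^{\beta_1,\beta_2}_{p_1',p_2'}$.
\end{proof}

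The step I expect to be the main obstacle is the subsequence extraction above: one must carve the degenerate rectangles into finitely many regimes and, in each, produce the required pairwise disjoint subsequence of either $\{I_k\}$ or $\{\wt{I}_k\}$, carefully weighing the now fixed parameter $A$ against the diverging or vanishing scales $\ell(I_k)$ and against $\dist(I_k,\{0\})$.
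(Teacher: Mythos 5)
Your proposal is correct and follows essentially the same route as the paper: deduce the a priori membership $b\in\BMO^{\beta_1,\beta_2}_{p_1',p_2'}$ from boundedness and \cite{AHLMO2021}, apply the lower bound of Lemma \ref{lem:absorb} on a sequence of rectangles witnessing the failure of $\VMO$, extract a subsequence along which either the cubes $I_k$ or the reflected cubes $\wt I_k$ are pairwise disjoint, and contradict Lemma \ref{lem:seq} applied to $[T_1,[b,T_2]]$ or its adjoint, with non-triviality of the limit coming from the uniform pairing lower bound. The only (harmless) deviation is in the combinatorial step: in the common-point cases you place the reflected cubes in lacunary spherical shells of radii $\sim A\ell(I_k)$, whereas the paper reduces the case $\ell(I_k)\to\infty$ to the distance-to-origin case for $\wt I_k$ and handles $\ell(I_k)\to 0$ by a nested-cube accumulation point plus a greedy selection; both mechanisms work, though your sketch leaves to the reader the (routine) verification that bounded configurations admit a subsequence sharing a common point.
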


\begin{proof}
    Similarly as in Lemma \ref{lem:absorb}, the proof only formally requires that
    $T_1, T_2$ are non-degenerate, the commutator is compact
    and $b \in \BMO^{\beta_1,\beta_2}_{p_1',p_2'}$. It is not strictly speaking needed that $p_i \le q_i$
    or that the CZOs are \emph{symmetrically} non-degenerate, but then
    $b \in \BMO^{\beta_1,\beta_2}_{p_1',p_2'}$ is an assumption, not a corollary
    of \cite{AHLMO2021} like under the current assumptions of the theorem.

	Aiming for a contradiction, assume that $b\not \in\VMO^{\beta_1,\beta_2}_{p_1',p_2'}$.
    Recall Lemma \ref{lem:seq} -- our aim is to construct a sequence of functions $(u_k)$
    like there to reach a contradiction. The underlying operator $U$ will
    be $C_b := [T_1, [b, T_2]] \colon L^{p_1, p_2} \to L^{q_1, q_2}$
    or $C_b^* = [T_1^*, [b, T_2^*]] \colon L^{q_1', q_2'} \to L^{p_1', p_2'}$ -- both
    of them are compact, in particular, bounded.

    We will soon use the fact that $b\not \in\VMO^{\beta_1,\beta_2}_{p_1',p_2'}$ to
    construct a sequence of rectangles $R_j = I_j \times J_j$ so that for some $c>0$
    we have both of the following items holding.
    \begin{enumerate}
        \item We have $\calO^{\beta_1,\beta_2}_{p_1', p_2'}(b,R_j) \ge c$ for all $j$.
        \item One of the four collections $\{I_j\}$, $\{J_j\}$, $\{\wt I_j\}$
            or $\{\wt J_j\}$ is disjoint.
    \end{enumerate}
    We first show, however, that given such a collection $(R_j)$ we can reach a contradiction.
    This step uses the compactness of $C_b$ or $C_b^*$. First, to each $R_j$ apply Lemma
    \ref{lem:absorb} -- this gives functions $\varphi_j := \varphi_{R_j}$ and
    $\psi_j := \psi_{\wt R_j}$ so that
    \begin{itemize}
        \item $\calO^{\beta_1, \beta_2}_{p_1', p_2'}(b, R_j) \lesssim 
            \babs{\big \langle C_b\varphi_j, \psi_j \big\rangle}$,
        \item $\supp \varphi_j \subset R_j$, $\supp \psi_j \subset \wt R_j$,
        \item $\|\varphi_j\|_{L^{p_1, p_2}} \lesssim 1$ and $\|\psi_j\|_{L^{q_1', q_2'}}
            \lesssim 1$.
    \end{itemize}
    If either the collection $\{I_j\}$ or $\{J_j\}$ is disjoint, we use the compactness
    or $C_b$ to say that, after passing to a subsequence if necessary, $C_b \varphi_j \to
    \Phi$ for some $\Phi \in L^{q_1, q_2}$. Set $u_j := \varphi_j$ and $U := C_b$.
    Clearly, $(i)$ and $(ii)$ of Lemma \ref{lem:seq} hold for $(u_j)$, and also $(iii)$ holds except for the fact
    that we have not yet checked that $\Phi$ is non-trivial. But this also follows, since we have
    $$
        c < \calO^{\beta_1, \beta_2}_{p_1', p_2'}(b, R_j) \lesssim 
            \babs{\big \langle C_b\varphi_j, \psi_j \big\rangle}
            \lesssim \|C_b \varphi_j\|_{L^{q_1, q_2}}
    $$
    for all $j$. This is a contradiction by Lemma \ref{lem:seq}. Now, if instead
    $\{\wt I_j\}$ or $\{\wt J_j\}$ is disjoint, use the compactness of $C_b^*$
    to conclude that, after passing to a subsequence if necessary, $C_b^* \psi_j \to
    \Phi$ for some $\Phi \in L^{p_1', p_2'}$. Set $u_j := \psi_j$ and $U := C_b^*$.
    Clearly, $(i)$ and $(ii)$ of Lemma \ref{lem:seq} hold for $(u_j)$, and also $(iii)$ holds except for the fact
    that we have not yet checked that $\Phi$ is non-trivial. But this also follows, since we have
    $$
        c < \calO^{\beta_1, \beta_2}_{p_1', p_2'}(b, R_j) \lesssim 
            \babs{\big \langle C_b\varphi_j, \psi_j \big\rangle}
            = \babs{\big \langle \varphi_j, C_b^*\psi_j \big\rangle}
            \lesssim \|C_b^* \psi_j\|_{L^{p_1', p_2'}}
    $$
    for all $j$. This is a contradiction by Lemma \ref{lem:seq}.

    So all that remains to do is to contruct such a collection of rectangles using
    that $b\not \in\VMO^{\beta_1,\beta_2}_{p_1',p_2'}$. First, the failure of the
    $\VMO^{\beta_1,\beta_2}_{p_1',p_2'}$ condition just means that there exists
    $c > 0$ and a sequence of rectangle $R_j = I_j \times J_j$ so that
    $\calO^{\beta_1,\beta_2}_{p_1', p_2'}(b,R_j) \ge c$ for all $j$, 
	and at least one of the following conditions holds:
	\begin{enumerate}
		\item $\ell(I_j)\to 0$ as $j\to\infty,$
		\item $\ell(I_j)\to \infty$ as $j\to\infty,$
		\item $\dist(I_j,0)\to\infty$ as $j\to\infty,$
	\end{enumerate}
    or one of the symmetrical conditions with the cubes $I_j$ replaced
    by the cubes $J_j$ holds. By symmetry, it is enough to consider only one of the cases
    $(1),(2)$ and $(3)$ and show that we can always choose a subsequence of the cubes $I_j$
    so that either the said subsequence or the corresponding subsequence of reflected
    cubes forms a disjoint collection of cubes.
	
	\subsubsection*{Suppose that $(3)$ holds.}
	We let $i(1) = 1$. Suppose that we have picked $i(1) < \cdots < i(N)$
    so that $\{I_{i(j)}\}_{j=1}^N$ is pairwise disjoint.
	Let $M>0$ be such that $\cup_{j=1}^N I_{i(j)}\subset B(0,M)$.
	By $(3)$ there exists $i(N+1) > i(N)$ so that
    $\dist(I_{i(N+1)},0) > M$. Now, obviously
    $\{I_{i(j)}\}_{j=1}^{N+1}$ is pairwise disjoint and so, by induction, we are done with the case (3). 
	
	\subsubsection*{Suppose that $(2)$ holds.}
    Since we have already dealt with the case $(3)$, we can assume that $(3)$ does not hold
    (otherwise we are done).
    Using $\dist(I_j,\wt{I_j})\sim \ell(I_j)$ and $(2)$
    we deduce that 
    \begin{align*}
        \lim_{j\to\infty}\dist(\wt{I}_j,0) = \infty.
    \end{align*}
    But then we are in the case $(3)$ for the reflected sequence $\{\wt{I}_j\}$
    and we can pick a disjoint subsequence out of them.
	
	\subsubsection*{Suppose that $(1)$ holds.}
	We may again assume that the condition $(3)$ does not hold. 
    Let now $M>0$ be such that $\cup_{j=1}^{\infty} I_j \subset [-M, M]^{d_1} =: Q_0$.
	Let $\calD_1(Q_0)$ consist of the $2^{d_1}$ closed subcubes of $Q_0$ 
    obtained by bisecting the sides (the closed dyadic children of $Q_0$).
    There must exist $Q_1\in \calD_1(Q_0)$ so that 
	$F(Q_1) :=	\{ j\geq 1 : I_j\cap Q_1\not=\emptyset \}$
	has infinitely many elements. Similarly, there must exist $Q_2 \in \calD_1(Q_1) \subset \calD_2(Q_0)$
    so that $F(Q_2)$ has infinitely many elements. Continue like this to obtain closed
    cubes $Q_0 \supsetneq Q_1 \supsetneq Q_2 \supsetneq \cdots$ with $\ell(Q_j) \to 0$,
    and the corresponding infinite index sets $F(Q_j) \supset F(Q_{j+1})$.
    Pick $i(1) \in F(Q_1)$. Then, using that $F(Q_2)$ is infinite, pick
    $i(2) \in F(Q_2)$ with $i(2) > i(1)$. Continue like this to obtain
    indices $i(1) < i(2) < \cdots$ so that $i(j) \in F(Q_j)$, i.e.,
    $I_{i(j)} \cap Q_j \ne \emptyset$. We then re-index the situation so that $(I_j)_j$
    now denotes this subsequence $(I_{i(j)})_j$, so now $I_j \cap Q_j \ne \emptyset$ for all $j$.
    Let
    $$
        \{x\} = \bigcap_j Q_j;
    $$
    the intersection is non-empty as each $Q_j \supset Q_{j+1}$ is compact, and then clearly
    a singleton as $\ell(Q_j) \to 0$. Let
    $$
        \calF := \{j \colon x \not \in I_j\}.
    $$
    Then either $\calF$ or $\N \setminus \calF$ is infinite. But notice that
    $\N \setminus \calF \subset \{j\colon x \not \in \wt I_j\}$ as $I_j$ and
    $\wt I_j$ are disjoint -- these are symmetric situations leading us to
    discover either a subsequence of disjoint cubes $I_j$ or disjoint cubes $\wt I_j$.
    So without loss of generality suppose, for instance, that $\calF$ is infinite.
    Now, we use that $I_j \cap Q_j \ne \emptyset$ implies that
    $$
        \dist(I_j, x) \le \diam(Q_j) \to 0.
    $$
    In addition, by $(1)$ we have that $\ell(I_j) \to 0$.
    Pick $i(1) \in \calF$. Suppose $i(1) < i(2) < \cdots < i(N)$, $i(j) \in \calF$, have
    already been chosen so that the cubes $I_{i(1)}, \ldots, I_{i(N)}$ are disjoint.
    Then pick -- using that $\calF$ is infinite -- $L \in \calF$, $L > i(N)$, so that
    $$
    \dist(I_L, x) + \diam(I_L) < \dist\Big(\bigcup_{j=1}^N I_{i(j)}, x \Big)/10, 
    $$
    where it is important to notice that the RHS is strictly positive as $x$
    does not belong to any of these finitely many cubes. Set $i(N+1) := L$.
    It is now clear that $(I_{i(j)})_{j=1}^{N+1}$ is a disjoint collection of cubes and $i(j) \in \calF$
    for all $j = 1, \ldots, N+1$. Induction finishes the proof.
\end{proof}

\subsection{The cases where $\beta_i\geq 1$ for $i=1$ or $i=2$} 
We end by verifying that if $\beta_1 \ge 1$ or $\beta_2 \ge 1$, compactness
implies that the bi-commutator vanishes. This was used in the sufficiency proofs
to rule out having to have to do something in these cases.

So assume that $[T_1,[b,T_2]]:L^{p_1,p_2}\to L^{q_1,q_2}$ is compact,
$b \in \BMO^{\beta_1, \beta_2}$ (as previously, see the beginning of the proof
of Theorem \ref{thm:necessity}, this follows if $p_i \le q_i$ by \cite{AHLMO2021}),
and $\beta_i\geq 1$ for some $i=1,2$ (we do not have to specify the exponents in the $\BMO$
norm as we cannot be in the diagonal). We may as well assume $\beta_1 \ge 1$ (the proof below
works equally well when $\beta_2 \ge 1$).
Now, Theorem \ref{thm:necessity} (or rather its proof if it is not the case that $p_i \le q_i$)
gives us that $b\in \VMO^{\vec{\beta}}$.
Next, by Lemma \ref{lem:VMOa0uniform}, we have the embedding
\[
\VMO^{\vec{\beta}}(\R^d) \subset  \VC^{0,\beta_1}(\R^{d_1}, \BMO^{\beta_2}(\R^{d_2})).
\]
We check that $b\in \VMO^{\vec{\beta}}(\R^d)$, via the above embedding,
implies that $b$ is a sum of two functions each of which is constant in either the
$x_1$ or the $x_2$ variable; since the bi-commutator annihilates such functions, we are then done.

Fix $\varepsilon > 0$.
Choose $t \in (0,1)$ so that $|z_1-w_1| < t$ implies that 
$$
\|b(z_1, \cdot) - b(w_1, \cdot)\|_{\BMO^{\beta_2}} \le \varepsilon|z_1-w_1|^{\beta_2}.
$$
Fix arbitrary $x_1,y_1\in \R^{d_1}$ and write the line segment $[x_1, y_1]$ (unless
we already have $|x_1 - y_1| < t$)
connecting $x_1$ and $y_1$ as a union $[x_1, y_1]= \bigcup_{j=1}^N L_j$
with $L_j = [z_{j-1}, z_j]$ (where $z_0 = x_1$ and $z_N = y_1$) satisfying
$|z_j - z_{j-1}| = t/2$ for all $j=1,\ldots, N-1$ and $|z_N - z_{N-1}| \le t/2$.
We now have
\begin{align*}\|b(x_1,\cdot) - b(y_1,\cdot) \|_{\BMO^{\beta_2}} 
    &\le  \sum_{j=1}^{N}  \|b(z_j,\cdot) - b(z_{j-1},\cdot) \|_{\BMO^{\beta_2}} \\
    &\le \varepsilon \sum_{j=1}^N t^{\beta_2} \le \varepsilon Nt \lesssim \varepsilon |x_1-y_1|.
\end{align*}
As $x_1, y_1$ and $\varepsilon > 0$ were arbitrary, this shows that
\[
    \|b(x_1,\cdot) - b(y_1,\cdot) \|_{\BMO^{\beta_2}}  = 0
\]
for all $x_1, y_1$.
Thus, we have that $b(x_1,\cdot) - b(y_1,\cdot) = C(x_1,y_1)$ a.e. in $\R^{d_2}$,
and taking $y_1 = 0,$ we obtain that given $x_1$ we have 
\[
b(x_1,x_2) = b(0,x_2) +  C(x_1,0)
\]
for almost every $x_2$.

\bibliography{references}
\end{document}